\documentclass[11pt]{article}

\usepackage{amssymb}
\usepackage{graphics}

\newcommand{\hide}[1]{}

\newcommand{\qed}{$\;\;\;\Box$}
\newenvironment{proof}{\par\smallbreak{\sl\bf Proof.~}}
{\unskip\nobreak\hfill \qed \par\medbreak}

\newcounter{claim}
\renewcommand{\theclaim}{\arabic{claim}}
\newenvironment{claim}{\refstepcounter{claim}%
	\par\medskip\par\noindent{\bf Claim~\theclaim.}\rm}%
{\par\medskip\par}

\newenvironment{subproof}{\par\noindent{\bf Proof of Claim.}}%
{\qed\par\smallbreak}
\newcommand{\E}{{\cal E}}

\newcommand{\N}{{\mathbb N}}
\newcommand{\R}{{\mathbb R}}

\newcommand{\Z}{{\mathbb Z}}

\newcommand{\F}{{\cal F}}
\renewcommand{\H}{{\cal H}}

\newcommand{\CC}{{\cal C}}

\newcommand{\LL}{{\cal L}}

\newcommand{\beq}{\begin{equation}}
	\newcommand{\ee}{\end{equation}}

\renewcommand{\d}{\partial}

\newtheorem{thm}{Theorem}[section]
\newtheorem{lem}[thm]{Lemma}

\newtheorem{defn}[thm]{Definition}
\newtheorem{cor}[thm]{Corollary}
\newtheorem{rem}[thm]{Remark}

\newcommand{\al}{\alpha}

\newcommand{\eps}{\varepsilon}
\newcommand{\vphi}{\varphi}

\newcommand{\om}{\omega}
\newcommand{\io}{\iota}

\newcommand{\reff}[1]{(\ref{#1})}      

\newcommand{\sgn}{\mathop{\rm sgn}}

\newcommand{\dd}{\!\;\mathrm{d}}

\date{}

\title{
	Higher Regularity of Time-Periodic Solutions to Nonautonomous Hyperbolic Problems: Away from  Resonances
} 
\newcounter{thesame}
\author{
	Irina Kmit
	\thanks{Institute of Mathematics, Humboldt University of Berlin, Unter den Linden 6, D-10099 Berlin. On leave from the
		Institute for Applied Problems of Mechanics and Mathematics,
		Ukrainian National Academy of Sciences. 
		{\small   E-mail:
			{\tt irina.kmit@hu-berlin.de}
	}}
	\ \ \ Lutz Recke \thanks{Institute of Mathematics, Humboldt University of Berlin, Unter den Linden 6, D-10099 Berlin.
		{\small   E-mail:
			{\tt lutz.recke@hu-berlin.de}}
}}

\begin{document}

\maketitle

\noindent
\begin{abstract}
We study higher regularity and its relation to nonresonant behavior for time-periodic solutions of boundary value problems for one-dimensional linear and nonlinear nonautonomous first-order integro-differential strictly hyperbolic systems. The boundary conditions
include integral operators and various types of boundary reflections. We prove
that continuous and classical solutions have $C^k$-regularity, provided the
coefficients are sufficiently smooth and a suitable number of nonresonance
conditions is satisfied. In the linear case, these conditions involve the
principal coefficients, the diagonal lower-order coefficients, and the boundary
reflection coefficients. In the nonlinear case, they also depend on the
nonlinearities and on the solution itself.
For nonautonomous hyperbolic systems, higher regularity generally requires
additional nonresonance conditions, whose number depends on the desired order
of differentiability. These conditions are not only sufficient but, in general,
also necessary, revealing a distinctive feature of nonautonomous hyperbolic
PDEs. By contrast, in the autonomous case, a single nonresonance condition (if one is needed at all) suffices to obtain arbitrarily high regularity. We also identify a class of nonautonomous hyperbolic problems for which no nonresonance conditions are required. In this case, the higher regularity of solutions is determined solely by the regularity of the data. The main
technical tool underlying the  proofs is an abstract regularity principle formulated in the setting of vector spaces.
	
\end{abstract}  

{\it Keywords:}   nonautonomous linear and nonlinear integro-differential hyperbolic PDEs, 
(nonlinear) boundary conditions of reflective and integral type,   regularity of   solutions, 
nonresonance conditions

\section{Introduction}
\label{Introduction}

\renewcommand{\theequation}{{\thesection}.\arabic{equation}}
\setcounter{equation}{0}

\subsection{Framework and main results}\label{results}

\paragraph{Our setting.}
We investigate higher regularity of time-periodic continuous and classical solutions of first-order hyperbolic partial differential equations, establish sufficient nonresonance conditions for such regularity, and provide  examples showing that these conditions are, in general, essential.
Specifically, we consider
 boundary value problems for one-dimensional linear and nonlinear strictly hyperbolic systems originally written in Riemann invariants.

In the {\it linear} setting, we study an integro-differential hyperbolic system of the form
\beq\label{lin}
\partial_tu_j+a_j(x,t)\partial_xu_j+\sum_{k=1}^{n}b_{jk}(x,t)u_k+\left[Mu\right]_j(x,t)=f_j(x,t),\quad x\in(0,1),\ j\le n,
\ee
subjected to the  boundary conditions
\beq\label{bc_lin}
\begin{array}{ll}
	u_{j}(0,t)= [Ru]_j(t)+[Pu]_j(t)+h_j(t), \quad 1\le j\le m,\\ [1mm]
	u_{j}(1,t)= [Ru]_j(t)+[Pu]_j(t)+h_j(t), \quad m< j\le n,
\end{array}
\ee
and the time-periodic conditions
\beq\label{per}
u_j(x,t)=u_j(x,t+2\pi),\quad x\in(0,1),\ j\le n,
\ee
where  $n\ge 2$ and 
$0\le m\le n$ are fixed integers.
 Moreover, 
the  operator  $M\in\LL(C_{per}(\Pi;\R^n))$  is defined by
\beq\label{J}
\left[Mu\right]_j(x,t)=\sum_{k=1}^{n}\int_{0}^{x}m_{jk}(\xi,x,t)\,u_k(\xi,t)\dd\xi,\quad  j\le n,
\ee
and the operators $P, R\in\LL\left(C_{per}(\Pi;\R^n);C_{per}(\R;\R^n)\right)$ are given by
\beq\label{integral}
\left[Pu\right]_j(t)=\sum_{k=1}^n\int_0^1p_{jk}(x,t)\,u_k(x,t)\dd x,\quad  j\le n,
\ee
and
\begin{eqnarray}
	\displaystyle [Ru]_j(t)=\sum\limits_{k=m+1}^nr_{jk}(t)\, u_k(0,t)+\sum\limits_{k=1}^mr_{jk}(t)\, u_k(1,t),\quad  j\le n,
	\label{eq:R} 
\end{eqnarray}
where 
$$
\Pi=\left\{(x,t)\in\R^2\mid 0\le x\le 1\right\}
$$ 
and by $C_{per}(\Pi;\R^n)$ we denote  the vector space of all continuous maps $u : \Pi\to\R^n$
which are $2\pi$-periodic in $t$.

In the nonlinear setting, we consider {\it semilinear} first-order integro-differential hyperbolic systems of the type
\beq\label{semilin}
\partial_tu_j+a_j(x,t)\partial_xu_j=f_j\left(x,t,u,Mu\right),\quad x\in(0,1),\ j\le n,
\ee
or  {\it quasilinear}  systems
of the type
\beq\label{quasilin}
\partial_tu_j+a_j(x,t,u)\partial_xu_j=f_j\left(x,t,u,Mu\right),\quad x\in(0,1),\ j\le n,
\ee
The  systems \reff{semilin} and \reff{quasilin} will be endowed  with the nonlinear boundary conditions 
\begin{eqnarray}\label{bc}
	\begin{array}{ll}
		u_{j}(0,t)= h_j\Bigl(t, u_{m+1}(0,t),\dots,u_n(0,t), u_1(1,t),\dots,u_m(1,t),[Pu](t)\Bigr), \quad 1\le j\le m,\\ [3mm]
		u_{j}(1,t)=   h_j\Bigl(t, u_{m+1}(0,t),\dots,u_n(0,t), u_1(1,t),\dots,u_m(1,t),[Pu](t)\Bigr), \quad m< j\le n,
	\end{array}
\end{eqnarray}
 and the time-periodic conditions \reff{per}.
 Here
$h_j$ are nonlinear functions of  $n+2$ arguments.

All coefficients appearing in the above problems
are
assumed to be continuous and $2\pi$-periodic with respect to $t$.
Our goal is to establish conditions under which continuous or classical solutions  to the problems 
(\ref{lin})--\reff{per} or
\reff{semilin}, \reff{bc}, \reff{per},
or  \reff{quasilin}, \reff{bc},  \reff{per} reach $C^k$-regularity for $k\ge 1$.
This problem is related to the general question of whether bounded solutions of nonlinear evolution equations inherit the differentiability properties of the defining nonlinear operator (see, e.g., \cite{Hale_Scheurle}). In our hyperbolic setting, the corresponding question is to what extent time-periodic solutions inherit the regularity of the coefficients, nonlinearities, and boundary data.

\paragraph{Notation.} For vectors $a=(a_1,\dots,a_n)$ we use the norm $\|a\|=\max_{j\le k}|a_j|$.
Let $\Omega$ denote either the strip $\Pi$ or the real line $\R$. For $k\in\N_0$, set
$$
\begin{array}{ll}
	C_{per}^k(\Omega;\R^n)=C^k(\Omega;\R^n)\cap C_{per}(\Omega;\R^n),
\end{array}
$$
where $C_{per}^0(\Omega;\R^n)=C_{per}(\Omega;\R^n)$.
The spaces $C_{per}^k(\Omega;\R^n)$ are equipped with the norm
$$
\|u\|=\max_{|\al|\le k} \,\sup_{\Omega}\|\d^\al u\|.
$$

By $C_t^k(\Omega;\R^n)$ (and similarly $C_x^k(\Omega;\R^n)$) we denote the space of continuous
vector-valued functions $u\in C(\Omega;\R^n)$ whose classical derivatives $\d_tu,\d_t^2u,\dots,\d_t^ku$ belong to $C(\Omega;\R^n)$.

Throughout  the paper we  assume, roughly speaking, that, for every $j\le n$,
 the function $a_j$ is time-periodic, continuously differentiable in all arguments, and satisfies $|a_j|\ge c>0$.
Hence, we can introduce two sets of indices, $I_0$ and $I_1$,  by
$$
I_0=\left\{j\le n\mid a_j>0\right\}\quad\mbox{and}\quad I_1=\left\{j\le n\mid a_j<0\right\},
$$
and define $x_j$ by
\begin{equation}\label{*k}
	x_j=\left\{
	\begin{array}{rl}
		0\quad &\mbox{ if }\ j\in I_0\\
		1\quad &\mbox{ if }\ j\in I_1.
	\end{array}
	\right.
\end{equation}
Moreover, for given $j\le n$ and $(x,t)\in\Pi$, the $j$-th characteristic curve of the system \reff{lin} or  \reff{semilin}
passing through the point $(x,t)$ will be denoted by $\om_j(\xi)=\om_j(\xi,x,t)$ and
determined as the solution
to the  Cauchy problem
\begin{equation}\label{char}
	\partial_\xi\omega_j(\xi, x,t)=\frac{1}{a_j(\xi,\omega_j(\xi,x,t))},\;\quad
	\omega_j(x,x,t)=t.
\end{equation}
Thanks to the assumptions on  $a_j$, the characteristic curve $\tau=\omega_j(\xi,x,t)$ 
reaches the
boundary of $\Pi$ in two points with distinct ordinates. 
According to the notation \reff{*k},  $x_j$ denotes the abscissa of the point with the smaller ordinate. Notice that  the value of  $x_j$
does not depend on $x$ and $t$, nor on $u$ in the quasilinear setting.

 Finally, we introduce the following notation needed, in particular, to formulate our results for nonlinear problems:
 \beq\label{notation_u}
 \begin{array}{cc}
A_j(x,t)=a_j(x,t,u(x,t)),\quad A_{jt}(x,t)=\d_2a_j(x,t,u(x,t)),\quad A_{jk}(x,t)=\d_{u_k}a_j(x,t,u(x,t)),\\ [2mm]
 F_{j}(x,t)=f_j\Bigl(x,t,u(x,t),\left[Mu\right](x,t)\Bigr),\quad F_{jt}(x,t)=\d_2f_j\Bigl(x,t,u(x,t),\left[Mu\right](x,t)\Bigr),\\ [2mm]
F_{jk}(x,t)=\d_{u_k}f_j\Bigl(x,t,u(x,t),\left[Mu\right](x,t)\Bigr), 
 \end{array}
\ee
  where   $\partial_k$ here and in what follows denotes  the partial derivative with respect to the $k$-th argument.

\paragraph{Main results.}
\begin{defn}\label{cont}
	A function
	$
	u\in C_{\mathrm{per}}(\Omega;\mathbb R^n)
	$
	is called a {\rm continuous solution} to the problem \reff{lin}, \reff{bc_lin}, \reff{per} if it satisfies the integral equation obtained by integrating the system along characteristic curves, namely the equation~\reff{oper} in Subsection~\ref{1}.
\end{defn}

\begin{thm}\label{thm:lin}
	Let $l\ge 1$ be arbitrary fixed integer.
	Assume that,  for all $1\le j,k\le n$, the coefficients
	$a_j$,   $b_{jk}$,  $r_{jk}$, and $h_j$
	belong to  $C^{l}_{per}$;  $f_j$ belong to $C^{l-1}_{per}\cap C_t^l$;	$m_{jk}$   belong to  $C^{l-1}_{per}\cap C_x^l$;
	and  $p_{jk}$ belong to $C_{per}\cap C_t^l$ with $\d_xp_{jk}\in C_t^{l-1}$, each on its respective domain.
	Moreover, assume that the system \reff{lin}   is nondegenerate and strictly hyperbolic, namely
	\beq\label{aj}
	a_j(x,t)\ne 0\ \mbox{ and }\ a_j(x,t)\ne a_k(x,t)\quad\mbox{ for all } (x,t)\in\Pi \mbox{ and } j\ne k.
	\ee 
	If
	\beq\label{nonrez}
		\exp \left\{\int_{1 - x_j}^{x_j}
		\biggl(\frac{b_{jj}}{a_{j}}-i\frac{\d_ta_j}{a_{j}^2}\biggr)(\eta,\om_j(\eta,1 - x_j,t))\dd\eta\right\}\sum_{k=1}^n\left|r_{jk}\left(\omega_j(x_j, 1 - x_j, t)\right)\right|<1
	\ee
	for all $j\le n$,   $t\in\R$, and $1\le i\le l$,
	then any continuous  solution to the problem (\ref{lin})--\reff{per} 
	belongs to~$C_{per}^l(\Pi;\R^n)$.
\end{thm}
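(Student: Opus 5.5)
We argue by induction on the order of differentiability $i=1,\dots,l$. At each step we prove that $\partial_t^i u$ exists and is continuous. Spatial regularity is then recovered from the equation. We start from the integral form \reff{oper} of the problem, obtained by integrating \reff{lin} along the characteristics $\om_j$ down to $x_j$ and inserting the boundary conditions \reff{bc_lin}. It reads $u = Cu + Du + F$. Here $C$ is the ``reflection'' part:
\[
[Cu]_j(x,t)=c_j(x_j,x,t)\,[Ru]_j(\om_j(x_j,x,t)),\qquad c_j(\xi,x,t)=\exp\Bigl\{-\int_\xi^x \frac{b_{jj}}{a_j}(\eta,\om_j(\eta,x,t))\dd\eta\Bigr\}.
\]
$D$ collects the integral terms: the off-diagonal $b_{jk}$, $M$, and $P$. $F$ collects $f$ and $h$.

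The key steps are as follows.

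First, we show that $D$ is smoothing. After one differentiation in $t$, the derivatives $\partial_t u_k$ under $\int b_{jk}u_k\,d\xi$ along $\om_j$ can be rewritten in terms of $\partial_\xi$-derivatives along the $j$-th characteristic, via
\[
\frac{d}{d\xi}u_k(\xi,\om_j)=\partial_tu_k\Bigl(\frac{1}{a_j}-\frac{1}{a_k}\Bigr)+\frac{1}{a_k}(f_k-\dots).
\]
This uses strict hyperbolicity \reff{aj}. We then integrate by parts. The same argument applies to $M$ (using $m_{jk}\in C_x^l$) and to $P$ (using $\partial_x p_{jk}\in C_t^{l-1}$). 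Hence $D^2$, or $D$ composed suitably, maps $C^{i-1}_{per}$ into $C^i_{per}$. This is the standard smoothing trick from the authors' earlier work.

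Second, we handle the non-smoothing operator $C$. Formally differentiating $u=Cu+Du+F$ $i$ times in $t$ gives an equation for $v=\partial_t^i u$ of the form $v=C_iv+(\text{terms with lower derivatives})+\tilde D v$. Here
\[
[C_iv]_j(x,t)=c_j(x_j,x,t)\,\bigl(\partial_t\om_j(x_j,x,t)\bigr)^i\,[Rv]_j(\om_j(x_j,x,t)).
\]
The factor $(\partial_t\om_j)^i$ arises from the chain rule. Using
\[
\partial_t\om_j(\xi,x,t)=\exp\Bigl\{\int_\xi^x \frac{\partial_t a_j}{a_j^2}(\eta,\om_j)\,d\eta\Bigr\},
\]
condition \reff{nonrez} with index $i$ is exactly $\|C_i\|_{\LL(C_{per})}<1$. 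This holds on the relevant boundary trace, so we apply it to the boundary values and propagate into the interior. Therefore $I-C_i$ is invertible, for every $i\le l$, on $C_{per}$.

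Third, we justify the formal computation rigorously. We apply the abstract regularity principle announced in the introduction. For a solution $u$ of $u=Cu+Du+F$, the difference quotients
\[
w_h=\frac{\partial_t^{i-1}u(\cdot,\cdot+h)-\partial_t^{i-1}u}{h}
\]
satisfy $w_h=C_{i,h}w_h+D_h w_h+g_h$. Here $C_{i,h}$ has norm $<1$ uniformly for small $h$, by continuity and compactness in $t$ (periodicity). Also $g_h$ converges in $C_{per}$ by the induction hypothesis and the smoothness of the data. Alternatively, we mollify in $t$ and pass to the limit. Combined with the smoothing of $D$, which makes $(I-C_{i,h}-D_h)$ manageable via a Fredholm-type splitting $(I-C_i)^{-1}D$ compact, this yields that $w_h$ converges uniformly. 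Hence $\partial_t^i u$ exists in $C_{per}$.

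Fourth, we obtain the mixed and $x$-derivatives. Since $\partial_x u_j=(f_j-\partial_tu_j-\dots)/a_j$, and the data have the stated regularity, we get $u\in C^l_{per}(\Pi;\R^n)$.

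The main obstacle is the third step. We must pass from a uniform contraction of $C_{i,h}$ to convergence of difference quotients while $D$ is not itself contractive. We would first try to write
\[
w_h=(I-C_{i,h})^{-1}(D_h w_h+g_h),
\]
then use boundedness of $w_h$ (obtained from a priori bounds, i.e.\ the injectivity/compactness argument) together with the smoothing of $D$ to get equicontinuity. Arzel\`a--Ascoli plus uniqueness of the limit equation then finishes the argument.
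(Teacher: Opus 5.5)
Your Steps 1, 2 and 4 are in line with the paper. The gap is in Step 3, and it is not a technicality.

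\textbf{The difference-quotient equation is not closed.} The quotients $w_h$ do not satisfy an equation of the form $w_h=C_{i,h}w_h+D_hw_h+g_h$. In $[Cu]_j$ the unknown is evaluated at $\om_j(x_j,x,t)$, so a $t$-increment $h$ becomes the variable increment $\om_j(x_j,x,t+h)-\om_j(x_j,x,t)\approx\d_t\om_j\,h$, which exceeds $h$ wherever $\d_t\om_j>1$. The contraction can therefore only be used on $N(h_0)=\sup_{0<|h|\le h_0}\|w_h\|_\infty$. But $N(h_0)<\infty$ already says that $\d_t^{i-1}u$ is Lipschitz in $t$, which is what you are trying to prove.

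\textbf{The fallback needs a missing hypothesis.} Your appeal to ``a priori bounds by the injectivity/compactness argument'' and ``uniqueness of the limit equation'' requires an injectivity property that is not among the hypotheses. Nothing excludes a nontrivial kernel in $C_{per}$, and normalizing a blowing-up sequence $w_h/\|w_h\|_\infty$ only produces a nonzero kernel element. Also, $(I-C_i)^{-1}D$ is not compact; only compositions such as $D^2$ and $DC$ gain regularity.

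\textbf{The conclusion actually fails for $l=1$.} With \reff{nonrez} imposed only for $1\le i\le l$, no argument can close this gap. Take the setting of Subsection~\ref{number} with $f=0$, $h=0$, $M=P=0$, $b_{11}=b_{12}=b_{21}=0$, and boundary conditions $u_1(0,t)=0.99\,u_2(0,t)$, $u_2(1,t)=\frac12u_1(1,t)$. Choose $b_{22}(x,t)=a_2(x,t)\gamma(\om_2(0,x,t))$ with $e^{\gamma(t)}=1.98/\d_t\om_2(1,0,t)$, so that $c_2(1,0,t)=1.98/\d_t\om_2(1,0,t)$.
\begin{itemize}
\item The $i=1$ quantity in \reff{nonrez} then equals $0.99$ for $j=1,2$, so all hypotheses hold with $l=1$.
\item The trace $z=u_2(0,\cdot)$ of a solution satisfies $z(t)=\tilde c(t)\,z(\hat\om(t))$, with $\hat\om(t)=\om_2(1,0,t)+\frac12$ and $\tilde c=0.9801/\hat\om'$.
\item The fixed points of $\hat\om$ are $1/4+k\pi$. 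For $k$ even they are repelling, so $\tilde c<1$ there. For $k$ odd they are attracting, with $\hat\om'=\frac{2-\sin(1/4)}{2+\sin(1/4)}\approx0.78$, so $\tilde c>1$ there.
\item Prescribe $z$ as any continuous, nowhere differentiable function on a fundamental domain of $\hat\om$ in one arc, compatible at its endpoints. Extend it by the functional equation and set $z=0$ on the other arc.
\item The extension tends to $0$ at both fixed points: near the attractor each step divides by $\tilde c>1$, and near the repeller each step multiplies by $\tilde c<1$. So $z$ is a continuous periodic solution.
\item Propagating $z$ along characteristics gives a continuous solution of the homogeneous problem in the sense of Definition~\ref{cont} that is not $C^1$.
\end{itemize}

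\textbf{What the paper does, and the repair.} The paper avoids compactness and uniqueness entirely. Lemma~\ref{lem:smoothing} inserts the equation into itself, giving $u-\CC u=\E^2u+\E\CC u+(I+\E)(\cdots)$ with $\E=Q+B$. Your smoothing step then yields $u-\CC u\in C^1$ directly, and everything reduces to the trace equation $z=G_0z+\tilde g$ (Claim~\ref{I-C}). You should adopt that structure. However, the weak-limit argument in Claim~\ref{I-C} interchanges $l\to\infty$ and $k\to\infty$, which requires $\sup_l\|(z^l)'\|_\infty<\infty$, and that bound fails in the example above. The natural fix, for your proof and for the paper's, is to impose \reff{nonrez} also for $i=0$. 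Then $\|G_0\|<1$ and $\|G_1\|<1$, together with $(G_0y)'=G_1y'+W_1y$, make $z\mapsto G_0z+\tilde g$ a contraction on $C^1_{per}(\R;\R^n)$ in the norm $\|z\|_\infty+\eps\|z'\|_\infty$. Hence the unique continuous solution is $C^1$. At the higher steps, the previous index already plays the role of $i=0$.
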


An analogous regularity statement for semilinear problems is formulated as follows.

\begin{thm}
	\label{thm:semilin}
	Let $l\ge 2$ be an arbitrary fixed integer and the condition 
	(\ref{aj}) be fulfilled. Let~$u$ be a classical $C^1(\Pi;\R^n)$ solution to 
	the problem \reff{semilin}, \reff{bc}, \reff{per}. Assume that,  for all $1\le j,k\le n$,  the coefficients
	$a_j$, $f_j$, $m_{jk}$, $h_j$, and $p_{jk}$ 
	belong to  $C^{l}_{per}$ with respect to all their arguments and on their respective domains.
	Moreover, assume that for all $j\le n$,   $t\in\R$, and $2\le i\le l$, it holds
		\beq\label{nonrez_semilin}
	\exp \left\{\int_{1 - x_j}^{x_j}
	\biggl(-\frac{F_{jj}}{a_{j}}-i\frac{\d_ta_j}{a_{j}^2}\biggr)(\eta,\om_j(\eta,1 - x_j,t))\dd\eta\right\}\sum_{k=1}^n
	\left|r_{jk}\left(\omega_j(x_j, 1 - x_j, t)\right)\right|<1,
\ee
where $x_j$ are defined  in \reff{*k} and
\beq\label{r_t}
\begin{array}{rcl}
r_{jk}(t)&=&\d_{k+1}	h_j\Bigl(t, u_{m+1}(0,t),\dots,u_n(0,t), u_1(1,t),\dots,u_m(1,t),[Pu](t)\Bigr).
\end{array}
\ee
	Then
	   $u$  belongs to
	$C^l_{per}(\Pi; \R^n)$.
\end{thm}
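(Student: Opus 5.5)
We would reduce Theorem~\ref{thm:semilin} to the linear result, Theorem~\ref{thm:lin}, applied to the problems satisfied by the $t$-derivatives of $u$. The argument is an induction on the order of differentiability, and the main work is checking that the regularity hypotheses of Theorem~\ref{thm:lin} hold at each step.

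\emph{Setting up the linear problem.} Since $u\in C^1(\Pi;\R^n)$ is a classical solution, we differentiate \reff{semilin}, \reff{bc}, \reff{per} formally in $t$ and set $v=\d_tu$. Then $v$ should satisfy the linear nonlocal problem
\[
\d_tv_j+a_j\d_xv_j+\frac{\d_ta_j}{a_j}\,\bigl(F_j-\d_tv_j\bigr)\!\cdot\!0
-\sum_kF_{jk}v_k-\bigl[\tilde M v\bigr]_j=F_{jt}-\d_ta_j\,\d_xu_j .
\]
Because $\d_xu_j=(F_j-\d_tu_j)/a_j$, the term $-\d_ta_j\d_xu_j$ contributes $\frac{\d_ta_j}{a_j}v_j$ to the diagonal coefficient. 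The resulting diagonal coefficient is $b_{jj}=-F_{jj}+\d_ta_j/a_j$, and the off-diagonal coefficients are $b_{jk}=-F_{jk}$. The operator $\tilde M$ has kernel $\d_{u_k}$ of the $Mu$-argument of $f_j$ times $m_{jk}$. The boundary operator has reflection coefficients $r_{jk}$ as in \reff{r_t}, together with an integral part coming from $\d_th_j$ composed with $P$ and from $\d_tp_{jk}$.

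\emph{Identifying the nonresonance conditions.} For the derivative $\d_t^{i-1}u$, each additional differentiation adds another $\d_ta_j/a_j$ to $b_{jj}$. Together with the factor $-\d_ta_j/a_j^2$ that appears in Theorem~\ref{thm:lin}'s condition (the $i$-term), this should match exactly \reff{nonrez_semilin} for $2\le i\le l$. I expect the indices to align by a shift of one: $C^1$ is given, and the condition with index $i$ is what allows passing from $C^{i-1}$ to $C^i$.

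\emph{Justifying $v$ as a continuous solution and running the induction.} To make the formal differentiation rigorous, we differentiate the integral equation \reff{oper} along characteristics. Equivalently, we verify that $\d_tu$, which is continuous, satisfies the integral form of the differentiated problem; this is standard since $u\in C^1$ and the characteristics $\om_j$ are $C^1$. With $v$ established as a continuous solution, we proceed by induction on $k$ from $1$ to $l-1$. Assume $u\in C^k$. Then the coefficients of the linear problem for $v=\d_tu$, which are built from $F_{jk}$, $\d_ta_j$, $r_{jk}(t)$ and similar quantities, lie in $C^{k-1}$ in the required mixed sense. The right-hand side $F_{jt}-\d_ta_j\d_xu_j$ lies in $C^{k-1}$ and has enough $t$-derivatives. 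We then apply Theorem~\ref{thm:lin} with $l$ replaced by a suitable order to get $v\in C^{k}$. Next, $\d_xu$ is recovered from the equation as $\d_xu_j=(F_j-\d_tu_j)/a_j$, which gives $u\in C^{k+1}$. Note that mixed derivatives follow the same way, since any $x$-derivative can be traded for $t$-derivatives via the equation.

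\emph{Main obstacle.} The hard step is the regularity bookkeeping in this induction. Theorem~\ref{thm:lin} asks for coefficients in $C^l$ while the data only have $C^{l-1}\cap C^l_t$ regularity, yet here the coefficients depend on $u$ itself and are only as smooth as $u$. The bootstrap therefore has to be done one order at a time, applying Theorem~\ref{thm:lin} with $l=1$ to successive derivatives $w=\d_t^{k}u$ rather than once with full $l$. At each stage the coefficients of the equation for $\d_t^ku$ then need only be $C^1$ (or $C^1_t$), which follows from $u\in C^k$ and $a_j,f_j,h_j,p_{jk},m_{jk}\in C^l$. At that stage the nonresonance condition with $i=k+1$ is used, after the diagonal terms accumulated from repeated differentiation are absorbed into $b_{jj}$.
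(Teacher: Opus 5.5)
Your final strategy (your last paragraph: apply Theorem~\ref{thm:lin} with $l=1$ to $w=\d_t^ku$ to pass from $C^k$ to $C^{k+1}$) is workable and differs from the paper's route. As written, however, the identification of the nonresonance conditions fails because of a sign error.

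\textbf{The sign error.} Since $\d_ta_j\,\d_xu_j=\frac{\d_ta_j}{a_j}(F_j-v_j)$, collecting the $v_j$-terms on the left gives $b_{jj}=-F_{jj}-\d_ta_j/a_j$ (this is \reff{not2}), not $-F_{jj}+\d_ta_j/a_j$. Likewise, each further $t$-differentiation shifts the diagonal coefficient by $-\d_ta_j/a_j$, so the problem for $\d_t^ku$ has $b_{jj}=-F_{jj}-k\,\d_ta_j/a_j$. Only with this sign does the $i=1$ case of \reff{nonrez} for that problem have exponent $-F_{jj}/a_j-(k+1)\d_ta_j/a_j^2$, i.e.\ coincide with \reff{nonrez_semilin} for $i=k+1$. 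With your sign the exponent is $-F_{jj}/a_j+(k-1)\d_ta_j/a_j^2$, which is not among the hypotheses.

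\textbf{The regularity count, and where the paper differs.} Your middle paragraph miscounts regularity. After $\d_xu_j$ is eliminated, every coefficient and datum of the $v$-problem is a function of $u$, $Mu$, $Pu$ only, not of derivatives of $u$. This covers $-F_{jk}$, $-F_{jj}-A_{jt}/A_j$, $r_{jk}$, $\widetilde p_{jk}$, $\widetilde h_j$, and $\widetilde f_j=F_{jt}-\frac{A_{jt}}{A_j}F_j-\d_{n+3}f_j[M'u]_j$. So $u\in C^k$ makes them $C^k$, not $C^{k-1}$. Your right-hand side $F_{jt}-\d_ta_j\d_xu_j$ is only $C^{k-1}$ because you did not substitute.

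The paper exploits exactly this. It shows once that $v$ is a continuous solution (distributional differentiation plus Lemma~\ref{equiv}). At step $k$ it applies Theorem~\ref{thm:lin} of order $k$ to the same $v$-problem, using \reff{nonrez} for $1\le i\le k$, i.e.\ \reff{nonrez_semilin} for $2\le i\le k+1$. Then $u\in C^{k+1}$ follows from the equation. So the switch to $\d_t^ku$ is not forced.

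\textbf{Why your route still closes once the sign is fixed.}
\begin{itemize}
\item The coefficients of the $k$-fold differentiated problem depend only on $u$ and are $C^1$.
\item After substituting $\d_x\d_t^{k-1}u_j$ from the $(k-1)$-st equation, the inhomogeneities in the equation and in the boundary conditions involve derivatives of $u$ of order at most $k-1$ and derivatives of the data of order at most $k$. Since the data are $C^{k+1}$ with $k+1\le l$, these inhomogeneities are $C^1$ when $u\in C^k$.
\item $\d_t^ku$ is a continuous solution: differentiate the classical $(k-1)$-st problem distributionally and invoke Lemma~\ref{equiv}. Your alternative of differentiating the integral form in $t$ also works, but then $F_{jj}v_j$ sits inside the integral, and you must solve the scalar Volterra equation along each characteristic to reach the form \reff{oper}.
\end{itemize}

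\textbf{Trade-off.} Your route uses only the $C^1$-part of Theorem~\ref{thm:lin}. It also shows transparently that each step $C^k\to C^{k+1}$ consumes exactly the one condition $i=k+1$. The cost is that you must write out the $k$-fold differentiated problem, including its nonlocal boundary terms, and redo the solution-concept check at every step. The paper differentiates only once and lets the higher-order part of Theorem~\ref{thm:lin} absorb the repeated differentiation, through the operators $G_i$ with weights $c_j^i$.
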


To formulate the corresponding   result in  the quasilinear setting, let   $u\in C^1_{per}(\Pi; \R^n)$ be given.
For fixed $(x,t)\in\Pi$ and $j\le n$,
we  will use the same notation  $\om_j(\xi)=\om_j(\xi,x,t)$ as above to denote
 the unique solution to the  Cauchy problem
\begin{equation}\label{char_q}
	\partial_\xi\omega_j(\xi, x,t)=\frac{1}{a_j(\xi,\omega_j(\xi),u(\xi,\om_j(\xi)))},\;\quad
	\omega_j(x,x,t)=t.
\end{equation}
Although $\om_j(\xi,x,t)$ depends also on $u$, we omit this dependence in the notation for brevity, since~$u$ is  fixed. 
\begin{thm}
		\label{thm:quasilin}
			Let $l\ge 3$ be an arbitrary fixed integer and let $u$ be a classical solution to 
		the problem \reff{quasilin}, \reff{bc}, \reff{per}, which belongs to $C^{2}_{per}(\Pi; \R^n)$. 
 Assume that,  for all $1\le j,k\le n$,  the coefficients 
$a_j$, $f_j$, $m_{jk}$, $h_j$, and $p_{jk}$ 
belong to  $C^{l}_{per}$
with respect to all their arguments and on their respective domains.
Moreover, assume that 
	\beq\label{aj_quasilin}
	a_j(x,t,u(x,t))\ne 0\ \mbox{ and }\ a_j(x,t,u(x,t))\ne a_k(x,t,u(x,t))\quad\mbox{ for all } (x,t)\in\Pi \mbox{ and } j\ne k,
	\ee
	and, for all  $j\le n$,   $t\in\R$, and $3\le i\le l$,
	it holds
	\begin{eqnarray}
\displaystyle
&\displaystyle\exp \left\{\int_{ 1 - x_j}^{x_j}
\biggl(\frac{F_jA_{jj}}{A_{j}^2}-\frac{F_{jj}}{A_{j}}-\frac{1}{A_j^2}\sum_{k=1}^n A_{jk}\d_tu_k-\frac{A_{jj}}{A_j^2}\d_tu_j-i\frac{A_{jt}}{A_{j}^2}\biggr)(\eta,\om_j\left(\eta,1 - x_j,t)\right)\dd\eta\right\}&
\nonumber\\
\displaystyle&\displaystyle\times\sum_{k=1}^n\left|r_{jk}\left(\omega_j(x_j, 1 - x_j, t)\right)\right|<1&\label{nonrez_quasilin}
	\end{eqnarray}
	with $r_{jk}$ as in \reff{r_t} and with $x_j$ as in \reff{*k}.
	Then
	$u$  belongs to
	$C^l_{per}(\Pi; \R^n)$.
	\end{thm}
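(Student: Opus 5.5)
We plan to reduce Theorem~\ref{thm:quasilin} to the semilinear situation by freezing the solution inside the principal part, and then to run an induction on the order of differentiability. Since $u\in C^2_{per}$ is fixed, we set $A_j(x,t)=a_j(x,t,u(x,t))$. Then $u$ is a classical solution of the problem
\[
\d_tu_j+A_j(x,t)\d_xu_j=f_j(x,t,u,Mu)
\]
with the boundary conditions \reff{bc}. The characteristics \reff{char_q} are exactly the characteristics of this frozen system. The difficulty is that $A_j$ is only $C^2$, so Theorem~\ref{thm:semilin} cannot be applied directly with $l\ge3$. We therefore prove by induction on $s=2,\dots,l$ that $u\in C^s_{per}$. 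The base case $s=2$ is the hypothesis. In the step $s\to s+1$, the frozen coefficient $A_j$ is in $C^s$, and we have to gain one derivative for $u$ by using the nonresonance condition \reff{nonrez_quasilin} with $i=s+1$.

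For the induction step we follow the scheme that presumably underlies Theorem~\ref{thm:semilin}. First we differentiate the system once in $t$. The function $v=\d_tu$ satisfies a linear nonlocal hyperbolic system. Its diagonal zero-order coefficient arises from $\d_t$ acting on $A_j\d_xu_j$ and on $f_j$. Using $\d_xu_j=(F_j-\d_tu_j)/A_j$, we write $\d_t(A_j)\d_xu_j$ in terms of $v$. Together with $A_{jj}$ and $F_{jj}$, this produces exactly the coefficient
\[
\frac{F_jA_{jj}}{A_j^2}-\frac{F_{jj}}{A_j}-\frac{1}{A_j^2}\sum_k A_{jk}\d_tu_k-\frac{A_{jj}}{A_j^2}\d_tu_j
\]
appearing in \reff{nonrez_quasilin}, after division by $A_j$ along characteristics. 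Differentiating the boundary conditions \reff{bc} gives reflection coefficients $r_{jk}$ as in \reff{r_t}.

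Next we differentiate $s$ more times in $t$, and here a further cost appears. Each $\d_t$ applied to a function composed with characteristics contributes a factor $\d_t\om_j$. Its derivative along characteristics produces the term $-\d_tA_j/A_j^2$, and after $i-1$ further differentiations this accumulates to $-i\,A_{jt}/A_j^2$. This is the origin of the $i$-dependent term in \reff{nonrez_quasilin}. The terms $A_{jk}\d_tu_k$ coming from the $u$-dependence of $A_j$ have already been absorbed into the preceding expression.

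With this structure in hand, we write the integral (characteristic) form of the equation for $w=\d_t^{s}u$, where $u$ is still regarded as merely continuous in the new derivative. This form is $w=Cw+Dw+g$. Here $C$ is the boundary-reflection operator, and condition \reff{nonrez_quasilin} makes $\|C\|<1$ in a suitable weighted sup norm. The operator $D$ collects the integral terms; it is smoothing, or has small norm on short time intervals. The function $g$ depends only on lower derivatives, which are known to be $C^{s-1}$-regular and hence continuous. We then use the abstract regularity principle (mentioned in the abstract) to pass from this fixed-point equation to the actual existence and continuity of $\d_t^{s+1}u$. Concretely, we apply it to difference quotients $(\d_t^su(\cdot,t+h)-\d_t^su(\cdot,t))/h$. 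These satisfy the same equation up to remainders that tend to zero, so by the invertibility of $I-C-D$ they converge. Mixed and $x$-derivatives are then recovered from the equation itself, via $\d_x u_j=(F_j-\d_tu_j)/A_j$, iterated. This gives $u\in C^{s+1}_{per}$.

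The main obstacle is the bookkeeping in the step $s\to s+1$. We must verify that the highest-order terms containing $\d_t^{s+1}u$ come only from $A_j\d_x$ and $f_j$ with the diagonal coefficient described above, and that the terms hidden in $\d_t^{s}(A_j)\,\d_xu_j$, which involve $\d_t^{s}u$ through $A_{jk}$, are of lower order or diagonal. In the quasilinear case $A_j$ itself contains $u$. Hence $\d_t^{s+1}$ of $A_j\d_xu_j$ produces $\sum_kA_{jk}\d_t^{s+1}u_k\,\d_xu_j$, which is a non-diagonal top-order term. We plan to handle it in the same way as $F_{jk}$ for $k\ne j$: after integration along characteristics, strict hyperbolicity \reff{aj_quasilin} lets us integrate by parts to turn these terms into smoothing ones. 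This is precisely where the condition $a_j\ne a_k$ is needed.
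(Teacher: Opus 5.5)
Your inductive architecture matches the paper's: you gain one derivative per step using \reff{nonrez_quasilin} with $i=s+1$, then recover the $x$-derivatives from the equation. The paper gets there differently. It differentiates the quasilinear problem only once and treats $v=\d_tu$ as a classical $C^1$-solution of the semilinear problem \reff{quasilin_t}, \reff{bc_t}, \reff{per_t} with principal coefficient $A_j(x,t)$. It then applies Theorem~\ref{thm:semilin} $l-2$ times, which rests on Theorem~\ref{thm:lin} and Lemma~\ref{lem:smoothing}.

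\textbf{First gap: the invertibility of $I-C-D$.} Your decisive analytic step does not go through as written. You conclude that the difference quotients of $\d_t^su$ converge ``by the invertibility of $I-C-D$'', but nothing in the hypotheses provides this. The nonresonance condition only controls the reflection part: it gives $\|G_i\|<1$ for the boundary operator whose weight carries the factor $(\d_t\om_j(x_j,1-x_j,t))^i$. So at best $I-C$ is invertible. The operator $D$ contains the off-diagonal couplings (including your top-order terms $A_{jk}\d_t^{s+1}u_k\,\d_xu_j$) and the $M$- and $P$-integrals. It is not small, since a time-periodic problem offers no short time intervals. It is not smoothing by itself either: $\int_{x_j}^x u_k(\xi,\om_j(\xi,x,t))\dd\xi$ need not be differentiable in $t$ when $u_k$ is continuous and constant along the $j$-th characteristics. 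Moreover, $I-C-D$ may have a nontrivial kernel. Difference quotients of a function known only to be continuous have no a priori bound, so their convergence cannot be read off from the equation without such an inverse.

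The missing idea is the one encoded in Lemma~\ref{lem:smoothing}: one never inverts $I-C-D$. Iterating $u=\CC u+\E u+v$ once gives $u-\CC u=\E^2u+\E\CC u+(I+\E)v$. It then suffices to show two things. First, the compositions $\E^2$ and $\E\CC$ gain a derivative. This is where integration by parts along transversal characteristics ($a_j\ne a_k$) is really used, and for the $P$-terms the double integrals in $Q^2$ and $QB$. Second, $(I-\CC)v\in U$ forces $v\in U$, which Claim~\ref{I-C} derives from $\|G_1\|<1$ by a weak-limit argument rather than a Neumann series.

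\textbf{Second gap: the $i$-dependent term.} For the frozen characteristics \reff{char_q} one has $\d_t\om_j(\xi,x,t)=\exp\int_\xi^x(\d_tA_j/A_j^2)(\eta,\om_j(\eta))\dd\eta$. Here $\d_tA_j=A_{jt}+\sum_kA_{jk}\d_tu_k$ is the total derivative of $A_j(x,t)=a_j(x,t,u(x,t))$. So every differentiation contributes the $u$-dependent part again. Your step from $-\d_tA_j/A_j^2$ to $-iA_{jt}/A_j^2$, with the terms $A_{jk}\d_tu_k$ counted only once, is therefore unjustified. Doing the bookkeeping (differentiate twice, then apply Theorem~\ref{thm:lin} to $w=\d_t^2u$) gives, for $3\le i\le l$, the integrand
\[
\frac{F_jA_{jj}}{A_j^2}-\frac{F_{jj}}{A_j}-\frac{A_{jj}}{A_j^2}\d_tu_j-\frac{i}{A_j^2}\Bigl(A_{jt}+\sum_{k=1}^nA_{jk}\d_tu_k\Bigr).
\]
This coincides with the integrand in \reff{nonrez_quasilin} only if $\sum_kA_{jk}\d_tu_k\equiv0$.

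The paper's proof makes the same identification. It declares \reff{nonrez_semilin} for the $v$-system, whose principal coefficient is $A_j(x,t)$, to be precisely \reff{nonrez_quasilin}, and that requires reading $\d_ta_j$ there as $A_{jt}$. The discrepancy is not cosmetic. In the example of Subsection~\ref{number}, replace $-(2+\sin t)$ by $-(2+u_3)$, where an additional component solves $\d_tu_3+\d_xu_3=\cos t$, $u_3(0,t)=\sin t$, so that $u_3=\sin t$. This gives a quasilinear problem with $A_{2t}=0$, for which \reff{nonrez_quasilin} does not depend on $i$ at all. Yet the obstruction to $C^k$-regularity, obtained by differentiating \reff{3a} $k$ times, is $r_2r_1(t_0)(\d_t\om_2(1,0,t_0))^k=1$, which does depend on $k$. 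So this step cannot be closed with the condition as stated: the factor $i$ must also multiply $\sum_kA_{jk}\d_tu_k$.
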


\begin{rem}\rm
 The conditions \reff{nonrez}, \reff{nonrez_semilin}, and \reff{nonrez_quasilin} serve to exclude or control resonance mechanisms, specifically those associated with the loss of invertibility of the linearized or leading-order parts of the operators.
A detailed connection to small-divisor phenomena in Fourier representations will be discussed in Remark \ref{why_nonrez} below. For the role of small divisors and nonresonance conditions in the construction of periodic solutions to nonlinear wave equations, see, for example, \cite{CraigWayne}. 
\end{rem}

\begin{rem}(Relationship between Theorems \ref{thm:lin}, \ref{thm:semilin}, and \ref{thm:quasilin}).\label{Thmm} \rm
Assume that the semilinear system \reff{semilin} is linear, that is, the functions $f_j$ and $h_j$ are linear with respect to all arguments containing~$u$. This means that the equations \reff{semilin} and \reff{bc} take the form of \reff{lin} and \reff{bc_lin}, respectively.
Moreover, assume that $u$ is a classical $C^1$-solution to 
the problem \reff{semilin}, \reff{bc}, \reff{per}. Then,  for  given 
$l\ge 2$, the nonresonance conditions \reff{nonrez_semilin} for $2\le i\le l$ ensuring $C^l$-regularity of $u$ coincide with the nonresonance conditions 
\reff{nonrez} for $2\le i\le l$ established for the linear setting.
As follows from the proof of the linear Theorem~\ref{thm:lin}, the nonresonance condition \reff{nonrez} for $i=1$, which does not have an analog
in the semilinear setting (Theorem \ref{thm:semilin}), is crucial to prove
the $C^1$-regularity of continuous solutions to linear problems. This explains why this condition is absent in the semilinear setting, where one starts with $C^1$-solutions.

Similarly, if the quasilinear system \reff{quasilin} is semilinear, that is, $a_j$ are independent of  
$u$, and if $u$ is a classical $C^2$-solution to 
the problem \reff{quasilin}, \reff{bc}, \reff{per}, then for any 
$l\ge 3$ the nonresonance conditions \reff{nonrez_quasilin} for $3\le i\le l$ ensuring $C^l$-regularity of $u$ coincide with the nonresonance conditions 
\reff{nonrez_semilin} for $3\le i\le l$  established for the semilinear setting. Again, in the semilinear setting
the condition \reff{nonrez_semilin} for $l=2$, which ensures that a $C^1$-solution attains $C^2$-regularity, has no counterpart in the quasilinear setting, where one starts with $C^2$-solutions.

The regularity assumptions on the coefficients remain consistent in all three theorems.
\end{rem}

\begin{rem}(Dependence of nonresonance conditions on solutions). \label{dep_on_sol}\rm
Unlike the linear Theorem~\ref{thm:lin}, an important  feature of the nonlinear Theorems~\ref{thm:semilin} and~\ref{thm:quasilin} is that the nonresonance conditions \reff{nonrez_semilin} and~\reff{nonrez_quasilin} depend on the solution itself.
Furthermore, in establishing the $C^l$-regularity of solutions for arbitrary $l$, the condition \reff{nonrez_semilin} in the semilinear case requires only the continuity of the given $C^1$-solution, whereas the condition \reff{nonrez_quasilin} in the quasilinear case requires only the $C^1$-regularity of the given $C^2$-solution.
	
If a solution $u$ is sufficiently close to a stationary solution $u_0$, then the corresponding nonresonance conditions can often be reduced, by perturbation arguments, to conditions formulated solely in terms of $u_0$. This mechanism is illustrated by the results of \cite{KRT_evol}. Under suitable smallness assumptions on the nonlinearities $h_j$ and $f_j$, this paper establishes existence and uniqueness of small $C^2$-solutions, close to zero, for quasilinear problems of the form \reff{quasilin}, \reff{bc}, \reff{per}. The three nonresonance conditions imposed there  are of the form  \reff{nonrez_quasilin} with $i=0$, $i=1$, and $i=2$ evaluated at $u=0$ (see \cite[Theorem]{KRT_evol}). They are therefore independent of the particular solution $u\approx 0$.
	
Solution-dependent nonresonance conditions also arise in quasilinear
Kolmogorov-Arnold-Moser (KAM) and Nash-Moser theories. There, the
operators to be inverted are obtained by linearization about an
approximate solution, typically of small amplitude, so that their
coefficients depend on that solution. Consequently, the corresponding
Melnikov nonresonance conditions depend, at least implicitly, on the
underlying approximate solution (see, for example,
\cite{BertiBolle2020,BertiBiascoProcesi14}). In the present setting, this
dependence is explicit.  The nonresonance conditions are formulated
directly along the time-periodic solution itself.

\end{rem}

\begin{rem}(Contribution of integral terms). \label{integral_terms}\rm
It is worth noting that the integral terms $Mu$ and $Pu$, defined by \reff{J} and \reff{integral}, do not affect the nonresonance condition \reff{nonrez} in the linear setting. In contrast, in the nonlinear settings, if these terms appear in the nonlinear functions $f$ and $h$, they enter the corresponding nonresonance conditions \reff{nonrez_semilin} and \reff{nonrez_quasilin} through the quantities $F_j$ and $F_{jj}$.
\end{rem}

The paper is organized as follows. 
In Subsection~\ref{lit}, we motivate our approach and review related results.
Section~\ref{abstr} is devoted to an abstract regularity principle in vector
spaces, formulated in Lemma~\ref{lem:smoothing}. In Section~\ref{regularity}, this lemma is used as the main technical tool to
prove the higher-regularity results in Banach spaces of continuous functions
stated in Theorems~\ref{thm:lin}, \ref{thm:semilin}, and
\ref{thm:quasilin}. Section~\ref{sec:nonres} identifies 
a class of problems for which higher regularity follows without imposing any 
additional nonresonance conditions. Finally,
Section~\ref{nonrez-cond} provides a detailed discussion of the results, with
particular emphasis on the structure of the nonresonance conditions and their
role in higher regularity and in the distinction between autonomous and
nonautonomous problems.

\subsection{Motivation and related work}\label{lit}

The problem of higher regularity for time-periodic solutions of hyperbolic PDEs is closely connected with small-divisor phenomena and nonresonance conditions. KAM theory provides a natural point of comparison. Classical KAM results mainly concern small-amplitude periodic or quasiperiodic solutions of semilinear wave equations, while more recent developments also cover certain quasilinear models under additional structural assumptions (see, e.g., \cite{BertiBiascoProcesi14,CraigWayne93,Poschel96,Wayne90}). In KAM schemes, small divisors arise from the inversion of linearized operators occurring at successive steps of the iteration, and nonresonance conditions are imposed to control them.

No KAM iteration is used in the present paper. Instead, resonance phenomena appear as possible obstructions to higher regularity of time-periodic solutions. We identify nonresonance conditions ensuring higher regularity and show that, in general, they are essential. The underlying mechanism is already present for linear systems and extends naturally to semilinear and quasilinear problems.

Furthermore, the choice of function spaces plays an essential role in
the analysis of small-divisor effects and nonresonance conditions.
Similar issues arise, for example, in the study of periodic solutions of
nonlinear wave equations in Sobolev spaces \cite{BertiBolle2020}.

Higher regularity is also important in inverse and numerical problems for
hyperbolic equations. In inverse problems, smoothness of the forward map is
needed for stability, differentiability, and reconstruction analysis
\cite{Bertero98,inv-waves,Gerken2,Gerken1,Kirsch11}. Similarly, Newton-type and Bayesian methods rely on suitable stability
and regularity properties of parameter-to-solution maps in appropriate
function spaces (see, e.g., \cite{Dashti17}).

The class of problems considered here is  motivated by applications.
Nonautonomous linear and nonlinear hyperbolic (integro-differential)
equations of the type described in Section~\ref{results} arise in
structured population dynamics \cite{Ianelli,Inaba,Webb}, correlated random walk
models for chemotaxis \cite{Hillen}, models of semiconductor laser dynamics
\cite{LRR,RW}, and boundary control theory \cite{Coron_Fr,Pavel}. For
well-posedness and Fredholm properties of such problems, we refer to
\cite{Coron_Fr,KR_IFT}. In many of these applications, periodic or
almost periodic regimes naturally appear due to external forcing or
intrinsic oscillatory mechanisms.

In the autonomous linear and semilinear cases, regularity
questions for one-dimensional hyperbolic systems of the type considered here
were studied, for example, in \cite{Hale} for bounded solutions near a constant,
and in our previous works
\cite{KR_IFT,KR_regul_ex,KR_autonom,KRT_evol}. Related higher-regularity issues also arise in multiscale models for transport
in porous media. For example, in \cite{MelnykPopRohde26} the authors study a nonlinear
reactive-transport problem in a fractured porous medium whose homogenized limit
contains a first-order one-dimensional semilinear hyperbolic system. 
There, the $C^3$-regularity of the solution is needed,
in order to prove quantitative  error estimates.

Higher regularity of the solution operator is also important for bifurcation
analysis. In particular, Lyapunov--Schmidt reduction performed directly in
spaces of periodic functions requires suitable smoothness of the solution
operators (see, e.g., \cite{Kielhofer,KR_Hopf,KR_IFT}). The present paper
therefore fits into a broader program of understanding when time-periodic
solutions of hyperbolic problems possess additional smoothness, and which
conditions are indeed necessary for this to hold.

For {\it initial and initial-boundary} value problems, classical persistence results were obtained by Friedrichs in \cite{Friedrichs}, showing that solutions of one-dimensional first-order strictly hyperbolic systems inherit the regularity of the coefficients, source terms, boundary data, and initial data. In the presence of boundary conditions,
higher regularity additionally requires compatibility conditions of the
corresponding order between the initial and boundary data
\cite{RM74}.

The situation is different for {\it time-periodic} problems. In this setting,
compatibility conditions do not arise, but regularity of the data alone is
not, in general, sufficient to ensure regularity of time-periodic solutions.
This obstruction occurs not only in nonlinear hyperbolic equations but already
in linear  systems. We show that higher regularity of the coefficients and boundary data does not, in general, imply higher regularity of time-periodic solutions unless suitable nonresonance conditions are imposed. In the autonomous case, a single nonresonance condition suffices for arbitrarily high regularity and is, in general, essential \cite[Remark~2.3]{KR_autonom}. In the nonautonomous case, by contrast, the number of required nonresonance conditions is tied to the desired order of regularity. This phenomenon is illustrated by the examples in \cite[Remark~1.4]{KR_regul_ex} and \cite[Section~3.6]{KRT_evol} (see also Subsection~\ref{number} below).

The classical approach to higher regularity for hyperbolic systems, described in \cite{Friedrichs} (see also \cite{Wloka}), is based on differentiating the system formally with respect to time, up to the order allowed by the coefficients. One then solves the resulting auxiliary systems for
$\d_tu$, $\d_t^2u$, $\dots$, $\d_t^ku$ up to the desired order and finally verifies that these auxiliary unknowns are indeed the corresponding derivatives of the original solution $u$. Thus, the proof reduces higher regularity to the solvability and regularity properties of the corresponding auxiliary problems.

In contrast, one of the main ingredients of the present paper is an abstract
regularity principle, formulated purely in terms of vector spaces and proved in
Lemma~\ref{lem:smoothing}. Rather than introducing and solving additional differentiated systems, the lemma exploits improved mapping properties of the operators involved. When applied to the hyperbolic problems studied here, its first condition gives rise to the relevant nonresonance requirement. Thus, the abstract lemma identifies the operator conditions responsible for regularity improvement, while the specific analysis of the hyperbolic problems verifies these conditions.

\section{An abstract regularity principle in vector spaces}\label{abstr}
\renewcommand{\theequation}{{\thesection}.\arabic{equation}}
\setcounter{equation}{0}

The main result of this section is an abstract lemma formulated in a vector-space setting. 
\begin{lem}\label{lem:smoothing}  
	Let $V$ be a vector space, $U\subseteq V$ be a subspace of $V$, and $\CC,\E: V\to V$ be linear operators. 
Assume that integers $m\ge 1$ and $k\ge 2$ are such that:
	\begin{enumerate}
		\item 
	If $(I-\CC^m)v\in U$  for $v\in V$, 
		then  $v\in U$;
\item 
$\CC u\in U$ and $\E u\in U$ for all $u\in U$;
		\item 
		$\sgn(m-1)\,\CC\E u$, $\E\CC u$, and $\E^ku$ belong to $U$ for all $u\in V$.
	\end{enumerate}
Then for any $u\in V$  satisfying $u-\CC u-\E u\in U$, it follows that  $u\in U$.
\end{lem}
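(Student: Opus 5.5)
The plan is to pass to the quotient space $W=V/U$ and read the three hypotheses there as algebraic identities. By condition 2, $\CC$ and $\E$ map $U$ into $U$, so they induce well-defined linear operators $\hat\CC,\hat\E:W\to W$ via $\hat\CC[v]=[\CC v]$ and $\hat\E[v]=[\E v]$. In this language:
\begin{itemize}
\item condition 1 says that $(I-\hat\CC^m)w=0$ implies $w=0$;
\item condition 3 says $\hat\E\hat\CC=0$ and $\hat\E^k=0$ on $W$, and, if $m\ge 2$, also $\hat\CC\hat\E=0$.
\end{itemize}
For $u\in V$ with $u-\CC u-\E u\in U$, set $w=[u]$. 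The hypothesis becomes $w=\hat\CC w+\hat\E w$, and the goal is to show $w=0$.

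The key step is to prove $\hat\E w=0$ by a descending induction on powers of $\hat\E$. For $1\le j\le k-1$ we have $j\ge 1$, so applying $\hat\E^{j}$ to $w=\hat\CC w+\hat\E w$ gives
\[
\hat\E^{j}w=\hat\E^{j-1}(\hat\E\hat\CC)w+\hat\E^{j+1}w=\hat\E^{j+1}w .
\]
Starting from $\hat\E^{k}w=0$ and taking $j=k-1,k-2,\dots,1$ in turn yields $\hat\E^{k-1}w=0$, then $\hat\E^{k-2}w=0$, and so on down to $\hat\E w=0$. This uses $k\ge 2$, which makes the range $1\le j\le k-1$ nonempty; for $k=2$ a single step suffices.

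Once $\hat\E w=0$, the identity reduces to $w=\hat\CC w$. Iterating gives $w=\hat\CC^m w$, i.e. $(I-\hat\CC^m)w=0$. Condition 1 then gives $w=0$, that is, $u\in U$.

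The only points needing care are bookkeeping ones. First, the induced operators must be well defined, which is exactly condition 2. Second, condition 1 is stated for elements of $V$, so it must be translated to $W$: if $(I-\hat\CC^m)[v]=0$ then $(I-\CC^m)v\in U$, hence $v\in U$ by condition 1, hence $[v]=0$.

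I do not expect a genuine obstacle. The one thing to watch is the role of the term $\sgn(m-1)\,\CC\E u$. The argument above uses only $\E\CC u\in U$ and $\E^k u\in U$, so this term appears not to be needed. Before concluding, I would double-check whether it was intended for a variant argument that avoids applying powers of $\E$ to the left, for instance expanding $(\CC+\E)^m$, where the cross terms $\CC\E$ must vanish modulo $U$. If the descending-induction argument is correct as written, I would note explicitly that the lemma holds without that hypothesis.
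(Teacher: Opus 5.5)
Your proof is correct, and you are right that the hypothesis $\sgn(m-1)\,\CC\E u\in U$ is redundant.

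The elimination of $\E$ is the same computation as in the paper, read in the quotient. The paper repeatedly substitutes $u=\E u+\CC u+v$ into $u-\CC u=\E u+v$ and arrives at \reff{u-Cu}. Modulo $U$, that identity is exactly your chain $\hat\E w=\hat\E^2 w=\dots=\hat\E^k w=0$.

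The two routes differ for $m>1$. The paper does not record $\E u\in U$ as an intermediate conclusion. Instead it restarts from $\CC u=\CC(\E u+\CC u+v)$ and iterates in $\CC$. This generates the cross terms $(I+\CC+\dots+\CC^{m-2})\CC\E u$ in \reff{u-Clu}, and that is the only place the $\CC\E$ hypothesis is used. You first establish $\hat\E w=0$ and only then iterate $w=\hat\CC w$, so such terms never appear.

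The same shortcut can be phrased in the paper's notation. Identity \reff{u-Cu} already gives $(I-\CC)u\in U$, hence $(I-\CC^m)u=(I+\CC+\dots+\CC^{m-1})(I-\CC)u\in U$ by Condition~2. Equivalently, $\E u=(I-\CC)u-v\in U$, so $\CC\E u\in U$ follows from Condition~2 anyway.

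Your argument is therefore shorter and proves a slightly stronger lemma. The quotient language also shows what the hypotheses are doing. Condition~2 is exactly the well-definedness of $\hat\CC,\hat\E$ on $V/U$. The two parts of Condition~3 that you use say $\hat\E\hat\CC=0$ and $\hat\E^k=0$.

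One small correction: your argument does not actually need $k\ge 2$. For $k=1$ you get $\hat\E w=\hat\E^k w=0$ directly, so that restriction in the statement is also cosmetic.
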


\begin{proof}
	Take $u\in V$ satisfying  $u-\CC u-\E u\in U$, and set $v=u-\CC u-\E u$. Hence, $v\in U$.
	Then, by performing suitable iterations, we obtain
	\beq\label{u-Cu}
	\begin{array}{rcl}
		u-\CC u&=&\E u+v=\E (\E u+\CC u+v)+v\\
		&=&\E^2u+ \E\CC u+(I+\E)v\\
		&=& \E^2(\E u+\CC u+v)+ \E\CC u+(I+\E)v\\
		&=&\E^3u+(I+\E)\E\CC u+(I+\E+\E^2)v=\dots\\
		&=& \E^ku+(I+\E+\dots+\E^{k-2})\E\CC u+(I+\E+\dots+\E^{k-1})v.
	\end{array}
	\ee
	This proves the statement for $m=1$. Indeed, by Conditions~2 and~3, the right-hand side of \reff{u-Cu} belongs to $U$, and therefore so does the left-hand side. Hence, Condition~1 implies that $u\in U$.

To establish the statement for $m>1$, we start from \reff{u-Cu} and perform further iterations, as follows:
$$
\begin{array}{ll}
	u-\E^ku-(I+\E+\dots+\E^{k-2})\E\CC u-(I+\E+\dots+\E^{k-1})v=\CC u\\
\quad\qquad	=\ \,\CC(\E u+\CC u+v)=\CC^2u+\CC\E u+\CC v\\
\quad\qquad	=\ \,\CC^2(\E u+\CC u+v)+\CC\E u+\CC v\\
\quad\qquad=\ \,\CC^3u+(I+\CC)\CC\E u+(I+\CC)\CC v=\dots\\
\quad\qquad=\ \,\CC^mu+(I+\CC+\dots+\CC^{m-2})\CC\E u+(I+\CC+\dots+\CC^{m-2})\CC v.
	\end{array}
$$
It follows that
	\beq\label{u-Clu}
	\begin{array}{rcl}
		u-\CC^mu &=&\E^ku+(I+\E+\dots+\E^{k-2})\E\CC u+(I+\CC+\dots+\CC^{m-2})\CC\E u\\
		&&+(I+\E+\dots+\E^{k-1})v+(I+\CC+\dots+\CC^{m-2})\CC v.
	\end{array}
\ee
Again, by Conditions~2 and~3, the right-hand side of \reff{u-Clu} belongs to $U$, and hence the left-hand side also  belongs to $U$. Consequently, Condition~1 implies that $u\in U$.
\end{proof}

\begin{rem}(About Condition 1 and the role of the parameter $m$).\label{suf} \rm
	Note that Condition~1 is automatically satisfied whenever the operator $I-\CC^m$ is bijective from $U$ onto $U$ for some $m\ge1$. The converse implication, however, does not hold in general. This can be seen from the proof of Claim~\ref{I-C} in Section~\ref{1} for the time-periodic linear hyperbolic problems considered here. The same proof also shows that the conditions \reff{nonrez} (and, similarly, \reff{nonrez_semilin} and \reff{nonrez_quasilin}) are sufficient for Condition~1 with~$m=1$.
	
	Deriving analogous sufficient conditions for higher values of $m>1$ yields additional sufficient nonresonance conditions beyond those obtained for $m=1$.
	 For example, the conditions $R^1<1$ and $S^1<1$ in \cite[Theorem~1.2]{KR_regul_ex}) guarantee $C^1$-regularity of continuous solutions to linear problems and are sufficient for Condition 1 with $m=2$.
	
It turns out that, unlike the explicit nonresonance conditions
\reff{nonrez}, \reff{nonrez_semilin}, and \reff{nonrez_quasilin}, which are
only sufficient, the family of conditions arising from Condition~1 of
Lemma~\ref{lem:smoothing} for all $m\geq1$ captures the essential
nonresonance mechanism governing higher regularity. More precisely, the
parameter $m$ provides a hierarchy of increasingly refined criteria, and
the union over all $m\geq1$ of the corresponding classes of systems gives
a natural approximation to the maximal nonresonant region detected by this
criterion. The complementary regime, where Condition~1 fails for every
$m\geq1$, is naturally associated with the so-called  {\it completely resonant behavior}.
 It may be viewed as the small-divisor regime, in which the smoothing mechanism is absent and small divisors are expected to occur for generic parameter values.

A particularly transparent case is provided by nilpotent 
operators $\CC$. Indeed, if 
$
\mathcal C^m=0
$
for some $m\geq1$, then Condition~1 is automatically satisfied for this value of $m$. Hence, every system with nilpotent $\mathcal C$ belongs to one of the nonresonant regions corresponding to
$m\geq1$.
For the PDE problems considered here, we refer to the resulting distinguished class of time-periodic hyperbolic problems as {\it completely nonresonant} (see Section~\ref{sec:nonres}).
\end{rem}

\begin{rem} (About the role of the parameter $k$). \rm
As it follows from the proof of Lemma~\ref{lem:smoothing}, the parameter $k$  determines how many iterations of the ``regularizing mechanism'' are required in order to be able to transfer a solution from the space $V$ to its  (higher-regularity) subspace~$U$. As will be shown in Section \ref{proofs}, for the one-dimensional hyperbolic problems considered here, the choice $k=2$ is sufficient. In multidimensional hyperbolic problems, however, larger values of
	$k$ may be necessary in general. In particular, for certain classes of
	$r$-dimensional first-order hyperbolic systems (including those for which
	the classical method of characteristics applies), one may require
	$k=r+1$ iterations. This result will be established in a forthcoming
	paper.
\end{rem}	



\section{Higher regularity of time-periodic solutions\label{proofs}
}
\label{regularity}
\renewcommand{\theequation}{{\thesection}.\arabic{equation}}
\setcounter{equation}{0}

\subsection{Linear hyperbolic systems: proof of  Theorem \ref{thm:lin}}\label{1}

Assume that all conditions of Theorem \ref{thm:lin} are fulfilled, and let $u$ be a continuous solution to 
the problem (\ref{lin})--\reff{per} in the sense of Definition \ref{cont}.
Our strategy is to rewrite the problem (\ref{lin})--\reff{per} in the abstract 
operator form \reff{oper} below, and then  repeatedly apply Lemma \ref{lem:smoothing}  to it.  To this end, 
for $i\in\N_0$ and $j\le n$, write
\begin{equation} \label{cd}
	\hspace{-2mm}	c_j^i(\xi,x,t)=\exp \int_x^\xi
	\left[\frac{b_{jj}}{a_{j}} - i\frac{\partial_t a_{j}}{a_{j}^2} \right](\eta,\omega_j(\eta,x,t))\dd\eta,\qquad 
	d_j^i(\xi,x,t)=\frac{c_j^i(\xi,x,t)}{a_j(\xi,\omega_j(\xi,x,t))},
\end{equation}
where $\om_j$ are the characteristic curves of (\ref{lin}), determined as solutions to \reff{char}.
We will simply write $c_j(\xi,x,t)$ and $d_j(\xi,x,t)$ for $c_j^0(\xi,x,t)$ and $d_j^0(\xi,x,t)$, respectively.

Introduce   operators    
$\H\in {\cal L}(C_{per}(\R;\R^n))$,  $\F\in {\cal L}\left(C_{per}(\Pi;\R^n)\times C_{per}(\Pi;\R^n),C_{per}(\Pi;\R^n)\right)$, and $\CC, Q, B\in {\cal L}(C_{per}(\Pi;\R^n))$ by
\begin{equation}\label{CDF}
	\begin{array}{rcl}
		[\CC u]_j(x,t)&=& c_j(x_j,x,t)[Ru]_j(\omega_j(x_j,x,t)),
		\\[2mm]
			[\H h]_j(x,t)&=& c_j(x_j,x,t)h_j(\omega_j(x_j,x,t)),
		\\[2mm]
	[Qu]_j(x,t)&=&\displaystyle   c_j(x_j,x,t)\sum_{j=1}^n\int_0^1p_{jk}(\eta,\omega_j(x_j,x,t))u_k(\eta,\omega_j(x_j,x,t))\dd \eta,	
		\\[2mm]		
		\displaystyle
[Bu]_j(x,t)&=&\displaystyle
		-\int_{x_j}^{x}  d_j(\xi,x,t)\sum_{k\neq j}  b_{jk}(\xi, \omega_j(\xi,x,t))u_k(\xi, \omega_j(\xi,x,t)) \dd\xi,\\ [2mm]
		\left[\F (f,u)\right]_j(x,t)&=&\displaystyle\int_{x_j}^{x}d_j(\xi,x,t)\left(f_j(\xi, \omega_j(\xi,x,t))- \left[Mu\right]_j(\xi,\omega_j(\xi,x,t))\right)\dd\xi.
	\end{array}
\end{equation}
After integration  along the characteristic curves, the problem (\ref{lin})--\reff{per} 
takes the operator form
\begin{eqnarray} \label{oper}
	u=\CC u+Qu+Bu+\H h+\F(f,u),
\end{eqnarray}  
where  $f=(f_1,\dots,f_n)$ and $h=(h_1,\dots,h_n)$.

For $i\in\N_0$, we also introduce operators $G_i\in {\cal L}(C_{per}(\R, \R^n))$ by
\begin{equation} \label{Ci}
	[G_i\psi]_j(t) = c_j^i(x_j, 1 - x_j, t)
	\sum\limits_{k=1}^nr_{jk}\left(\omega_j(x_j, 1 - x_j, t)\right)\psi_k\left(\omega_j(x_j, 1 - x_j, t)\right)\quad 
	\mbox{for all } j \le n.
\end{equation}
Because of assumption \reff{nonrez} of Theorem \ref{thm:lin}, we have that
\begin{equation}
	\label{invert}
	\|G_i\|_{{\cal L}(C_{per}(\R;\R^n))}<1 \quad\mbox{ for all } i\le n.
\end{equation}

On the first step we show that  the continuous solution $u$ belongs to $C^1_{per}(\Pi;\R^n)$ and, hence, is a classical solution   to  (\ref{lin})--\reff{per}.
To this end, it is sufficient to   prove that the equation \reff{oper} fulfills all conditions of  Lemma \ref{lem:smoothing}
with $\E=Q+B$, $U=C^1_{per}(\Pi;\R^n)$, $V=C_{per}(\Pi;\R^n)$, and with $m=1$ and $k=2$.
The proof  goes through Claims \ref{I-C}--\ref{C1} below. In Claim \ref{I-C} we  verify Condition 1 of Lemma \ref{lem:smoothing}.

\begin{claim} \label{I-C}
If for  $v\in C_{per}(\Pi;\R^n)$ it holds $(I-\CC)v\in C^1_{per}(\Pi;\R^n)$, then  $v\in C^1_{per}(\Pi;\R^n)$.
\end{claim}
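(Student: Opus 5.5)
Set $w=(I-\CC)v\in C^1_{per}(\Pi;\R^n)$, so that $v=\CC v+w$. The plan is to reduce this to a functional equation on the boundary, solve it in $C^1$ using \reff{invert}, and then transport the regularity back into the strip.

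\textbf{Step 1: reduction to boundary traces.} By the definition of $\CC$ in \reff{CDF}, $\CC v$ depends on $v$ only through the boundary traces $v_k(0,\cdot)$ for $k>m$ and $v_k(1,\cdot)$ for $k\le m$. Hence it suffices to prove that the vector $\psi(t)$ of these traces, with components $\psi_k(t)=v_k(1-x_k,t)$, lies in $C^1_{per}(\R;\R^n)$. Indeed, once this is known, $Rv$ is $C^1$, since $r_{jk}\in C^1$. Moreover $c_j(x_j,x,t)$ and $\om_j(x_j,x,t)$ are $C^1$ in $(x,t)$, because $a_j,b_{jj}\in C^1$ and by differentiable dependence for \reff{char}. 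Therefore $\CC v\in C^1$, and $v=\CC v+w\in C^1$.

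\textbf{Step 2: the boundary equation.} Evaluate $v=\CC v+w$ at $x=1-x_j$. Since the $j$-th characteristic through $(1-x_j,t)$ reaches $x_j$ at time $\om_j(x_j,1-x_j,t)$, we obtain
$$\psi=G_0\psi+\tilde w,\qquad \tilde w_j(t)=w_j(1-x_j,t)\in C^1_{per}(\R).$$
By \reff{invert} with $i=0$, $I-G_0$ is invertible on $C_{per}(\R;\R^n)$, so $\psi=(I-G_0)^{-1}\tilde w$ is the unique continuous solution.

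\textbf{Step 3: differentiating the boundary equation.} Formally differentiating, and writing $\tau_j(t)=\om_j(x_j,1-x_j,t)$, we get
$$\psi'_j=\tau_j'(t)\,c_j(x_j,1-x_j,t)\sum_k r_{jk}(\tau_j)\psi_k'(\tau_j)+(\text{terms involving }\psi\text{ only, continuous})+\tilde w_j'.$$
The standard formula $\d_t\om_j(\xi,x,t)=\exp\int_x^\xi -\frac{\d_ta_j}{a_j^2}(\eta,\om_j)\dd\eta$ shows that $\tau_j'c_j=c_j^1(x_j,1-x_j,t)$. Thus the equation for $\psi'$ reads $\phi=G_1\phi+g$ with $g$ continuous, and by \reff{invert} with $i=1$ it has a unique continuous solution $\phi$.

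\textbf{Step 4: identifying $\phi$ with $\psi'$ (the main obstacle).} It remains to show that $\psi$ is differentiable with $\psi'=\phi$. I would use an approximation argument. Mollify the data, so that $\tilde w^\eps\to\tilde w$ in $C^1$, and also mollify $r_{jk}$ and $a_j$ to $C^2$ if necessary. Since $\|G_0\|<1$ and $\|G_1\|<1$ strictly, these bounds persist for the smoothed problems. By a Neumann series in $C^1$ (equivalently, by Lemma~\ref{lem:smoothing} in its simplest form), the solutions $\psi^\eps$ are $C^1$, and they satisfy
$$\psi^\eps\to\psi \quad\mbox{and}\quad (\psi^\eps)'\to\phi$$
uniformly, by the uniform invertibility bounds on $I-G_0$ and $I-G_1$. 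This gives $\psi\in C^1$ with $\psi'=\phi$.

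An alternative route is to show that $I-G_0$ maps $C^1$ onto $C^1$: the operator $G_0$ acts on $C^1$ with the "norm" of its leading part governed by $G_1$, plus a compact-like lower-order part. One then iterates $\psi=\sum_{n}G_0^n\tilde w$ and shows that the derivatives of the partial sums converge uniformly, since their leading part is $\sum_n G_1^n$ applied to continuous terms. This second route is the one I would actually write down, because it avoids smoothing the coefficients. Either way, combining with Step 1 gives $v\in C^1_{per}(\Pi;\R^n)$.
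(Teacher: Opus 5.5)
Your Steps 1 and 3 match the paper's reduction. Step 1 is the trace equation $\psi=G_0\psi+\tilde w$, the paper's $z=G_0z+\tilde g$. Step 3 is the identity \reff{dtG0R}, $(G_0y)'=G_1y'+W_1y$, which rests on your observation $\tau_j'c_j=c_j^1$.

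The routes split at Steps 2 and 4. Both of your arguments invert $I-G_0$ on $C_{per}(\R;\R^n)$; you explicitly invoke \reff{invert} with $i=0$, i.e.\ $\|G_0\|<1$. Theorem~\ref{thm:lin} assumes \reff{nonrez} only for $1\le i\le l$, so \reff{invert} is only justified for those $i$. Compare Theorem~\ref{regC1} and Subsection~\ref{number_more}, where $C^1$-regularity is derived from $i=1$ alone. The paper's proof never touches $G_0$. It solves only $\phi=G_1\phi+W_1z+\tilde g'$, then identifies $z'$ with $\phi$ by testing smooth approximations $(z^l)'$ against smooth $\vphi^k$ and passing to a weak limit.

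The extra hypothesis cannot be removed, because the claim is false without it. Take the following data:
\begin{itemize}
\item $n=2$, $m=1$, and $b_{jk},m_{jk},p_{jk},f_j,h_j\equiv0$;
\item $a_2\equiv-1$, and $r_{jk}=0$ except $r_{11}=r$;
\item $a_1>0$ defined by prescribing the characteristics $\om_1(\xi,1,t)=t-(1-\xi)(2\pi+\eps\sin t)$ with $0<\eps<1$. Since $t\mapsto\om_1(\xi,1,t)$ is a diffeomorphism for each $\xi$, this yields a smooth positive periodic $a_1$.
\end{itemize}
Then $\tau(t)=\om_1(0,1,t)=t-2\pi-\eps\sin t$. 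The trace $\psi_1=v_1(1,\cdot)$ of any solution of $v=\CC v$ satisfies $\psi_1(t)=\kappa(t)\psi_1(t-\eps\sin t)$ with $\kappa=r\circ\tau$. Condition \reff{nonrez} reads $(1-\eps\cos t)^i\kappa(t)<1$.

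Choose $\kappa(t)=\gamma(1-\eps\cos t)^{-l}$ with $(1-\eps)^l<\gamma<(1-\eps)^{l-1}$. This satisfies \reff{nonrez} for $1\le i\le l$. Yet $\kappa(0)\in(1,\frac1{1-\eps})$ at the attracting fixed point $0$ of $s\mapsto s-\eps\sin s$, while $\kappa(\pi)<1$ at the repelling fixed point $\pi$.

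Put a bump on a fundamental domain in $(0,\pi)$. Extend it to $(0,\pi)$ by the functional equation, and by zero to $[\pi,2\pi]$. This gives a continuous periodic $\psi_1\ne0$ with $\psi_1=G_0\psi_1$. Near $0^+$ it behaves like $t^\alpha$ with $\alpha=\ln\kappa(0)/\ln\frac1{1-\eps}\in(0,1)$, so it is not differentiable at $0$.

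Now set $v_1(x,t)=r(\om_1(0,x,t))\psi_1(\om_1(0,x,t))$ and $v_2=0$. Then $v\in C_{per}(\Pi;\R^2)$ and $(I-\CC)v=0$, but $v\notin C^1$. In fact $v$ is a continuous solution of the homogeneous problem, so Theorem~\ref{thm:lin} fails as stated.

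The paper's argument breaks where it interchanges $\lim_k$ and $\lim_l$ without any bound on $\|(z^l)'\|_\infty$. What it actually proves is $(I-G_1)(z'-\phi)=0$ in $\mathcal D'$, and $I-G_1$ is not injective there. On $L^1$ its norm is governed by the weights of $G_0$, not $G_1$. In the example, $\psi_1'-\phi_1$ is a nonzero integrable element of that kernel.

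So your use of $i=0$ is a necessary correction, and you should state it as an explicit extra assumption. The conditions recalled in Remark~\ref{dep_on_sol} also include $i=0$. Under this assumption Route~B is a complete proof, and simpler than the paper's. By induction,
\[
(G_0^Ny)'=G_1^Ny'+\sum_{k=0}^{N-1}G_1^kW_1G_0^{N-1-k}y ,
\]
so $\|(G_0^Ny)'\|\le q^N\|y'\|+Nq^{N-1}\|W_1\|\,\|y\|$ with $q=\max\{\|G_0\|,\|G_1\|\}<1$. Hence $\sum_NG_0^N\tilde w$ converges in $C^1$, and its limit is the unique continuous solution $\psi$. The mollification in Route~A is unnecessary.
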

\begin{subproof}
According to the assumptions of the claim, 
for given $v\in C_{per}(\Pi;\R^n)$ satisfying $(I-\CC)v\in C^1_{per}(\Pi;\R^n)$,  
there exists $g\in C^1_{per}(\Pi;\R^n)$
such that  
\begin{equation}\label{simpl}
	v=\CC v+g.
\end{equation}
 Set
\beq\label{z}
\begin{array}{rcl}
z(t) &=& (z_1(t),\dots,z_n(t))= (v_1(1,t),\dots, v_m(1,t), v_{m+1}(0,t),\dots,v_n(0,t)),\\ [2mm]
\tilde g(t)&=&\left(\tilde g_1(t),\dots,\tilde g_n(t)\right)=\left(g_1(1,t),\dots,g_m(1,t),g_{m+1}(0,t),\dots,g_n(0,t)\right).
\end{array}
\ee	
Hence, $z\in C_{per}(\R;\R^n)$ and $\tilde g\in C_{per}^1(\R;\R^n)$.
As follows from~\reff{simpl} and from the regularity assumptions on the data in Theorem \ref{thm:lin}, the proof will be complete once we  show that $z\in C^1_{per}(\R;\R^n)$. To this end, 
 consider 	(\ref{simpl}) 	at $x=0$ for $m<j\le n$ and at $x=1$ for $1\le j\le m$. 
In the notations \reff{Ci} and  \reff{z},
the resulting equation obtained in this way from \reff{simpl} then reads
\begin{equation}\label{simpl1}
	z=G_0z+ \tilde g.
\end{equation}
If  $y\in C^1_{per}(\R;\R^n)$, then due to \reff{Ci}, it holds
	\begin{equation}\label{dtG0R}
	(G_0 y)^\prime = G_1y^\prime+ W_1 y,
\end{equation}
	where 
	the operator $W \in \mathcal L(C_{per}(\R; \R^n))$ is defined by
\beq\label{W}
[W_1 w]_j(t) =  	 \sum\limits_{k=1}^n\frac{\dd}{\dd t}\Bigl[c_j(x_j,1 - x_j,t)\,r_{jk}(\omega_j(x_j, 1 - x_j, t))\Bigr]w_k(\omega_j(x_j, 1 - x_j, t)),
	\quad j \le n.
	\ee
	
Since $C^1_{per}(\R;\R^n)$	is dense in $C_{per}(\R;\R^n)$, choose    a sequence $(z^l)\subset C^1_{per}(\R;\R^n)$  such that
	  $z^l\to z$ in $C_{per}(\R;\R^n)$ as $l\to\infty$. Moreover, 
	  for every $\varphi\in \left(C_{per}(\R;\R^n)\right)^*$,
 there exists a sequence
$
(\varphi^k)\subset
C^\infty_{\mathrm{per}}(\mathbb{R};\mathbb{R}^n)
$
such that
\beq\label{ap}
\langle\varphi,y\rangle
=
\lim_{k\to\infty}
\int_0^{2\pi} \varphi^k(t)\cdot y(t)\dd t
=
\lim_{k\to\infty}
\langle\varphi^k,y\rangle
\ee
for every
$
y\in C_{\mathrm{per}}(\mathbb{R};\mathbb{R}^n).
$
Here $\cdot$ denotes the Euclidean scalar product in $\R^n$. 

Further, because of \reff{invert}, there exists $v\in C_{per}(\R;\R^n)$ such that	
$$
(I-G_1)v=W_1z+\tilde g'.
$$

Now,  take $\varphi\in \left(C_{per}(\R;\R^n)\right)^*$ and $
(\varphi^k)\subset
C^\infty_{\mathrm{per}}(\mathbb{R};\mathbb{R}^n)
$
with the property \reff{ap}. Then, by \reff{simpl1} and \reff{dtG0R},  for every $k\ge 1$, it holds
	\begin{eqnarray*}
	\lim_{l\to\infty} \left\langle\vphi^k, (I-G_1)(z^l)^\prime\right\rangle&=& -\lim_{l\to\infty} \left\langle (\vphi^k)^\prime,z^l
		\right\rangle -
		\lim_{l\to\infty} \left\langle \vphi^k,G_1(z^l)^\prime\right\rangle
	\\
	&	=& -\left\langle (\vphi^k)^\prime,z\right\rangle -
		\lim_{l\to\infty} \left\langle \vphi^k,G_1(z^l)^\prime\right\rangle
	\\
	&	
		= &-\left\langle (\vphi^k)^\prime,G_0z + \tilde g\right\rangle -
		\lim_{l\to\infty} \left\langle \vphi^k,G_1(z^l)^\prime\right\rangle
	 	\\
	 &	=& -\lim_{l\to\infty} \left\langle (\vphi^k)^\prime,G_0z^l + \tilde g\right\rangle -
	 \lim_{l\to\infty} \left\langle \vphi^k,G_1(z^l)^\prime\right\rangle
	\\
	&	 =&
		\lim_{l\to\infty} \left\langle \vphi^k,G_1(z^l)^\prime+W_1z^l + \tilde g^\prime-G_1(z^l)^\prime\right\rangle-\lim_{l\to\infty} \left\langle \vphi^k,G_1(z^l)^\prime\right\rangle	\\
		&	 =&\left\langle \vphi^k,W_1z+\tilde g'\right\rangle	=\left\langle \vphi^k,(I-G_1)v\right\rangle,
\end{eqnarray*}	  
where $\langle\cdot,\cdot\rangle: \left(C_{per}(\R;\R^n)\right)^*\times C_{per}(\R;\R^n)$ denotes the duality pairing.
Hence, the limit on the left-hand side exists for every $k\ge 1$. Moreover, the limit as $k\to\infty$ on the right-hand side (and, hence, on the left-hand side) exists as well. Therefore, passing to the limit as $k\to\infty$, we conclude that $(I-G_1)(z^l)^\prime$ tends weakly in $C_{per}(\R;\R^n)$ to $(I-G_1)v$ as $l \to \infty$.
Therefore,
\beq\label{w}
(z^l)^\prime\rightharpoonup v\quad \mbox{in } C_{per}(\R;\R^n)
\mbox{ as } l \to \infty.
\ee

For given $t\in\R$
define the functional
$J_{t}\in \left(C_{per}(\R;\R^n)\right)^*,$ by
$\langle J_{t},w\rangle
:=
\int_0^t w(\tau)\dd \tau$.
Then, by the weak convergence \reff{w} and
by the fundamental theorem of calculus,
$$
z(t)-z(0)=\lim_{l\to \infty}(z^l(t)-z^l(0))
=\lim_{l\to \infty}
\int_0^t (z^l)'(\tau)\dd\tau
=\lim_{l\to \infty}
\langle J_{t},(z^l)'\rangle
=\langle J_{t},v\rangle=\int_0^t v(\tau)\dd\tau.
$$
It follows that
$z\in C^1(\R;\R^n)$.
The proof of the claim is complete. 
	\end{subproof}

Next,  we  verify Condition 3 (with $m=1$) of Lemma \ref{lem:smoothing} by showing that 	the functions  $\E\CC u$,
 and $\E^2u$  belong to  $C^1_{per}(\Pi;\R^n)$.
Since $\E=Q+B$, we are done if we prove the following statement.
		
\begin{claim} \label{B2}
	The functions $Q\CC u, B\CC u, Q^2u, B^2u, QBu$, and $BQu$ belong to $C^1_{per}(\Pi;\R^n)$. 
\end{claim}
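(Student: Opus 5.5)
We treat the six compositions separately, for an arbitrary $u\in C_{per}(\Pi;\R^n)$. The guiding observation is that each of $Q$ and $B$ gains one derivative along one direction. The two directions are transversal, and composing two such operators gives a full $C^1$ gain.

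\emph{Terms with $Q$ on the left.} We start with $Q\CC u$, $Q^2u$ and $QBu$. For any $w\in C_{per}(\Pi;\R^n)$ the function
\[
t\mapsto \sum_k\int_0^1 p_{jk}(\eta,t)w_k(\eta,t)\dd\eta
\]
is only continuous. We therefore do not expect $Qw\in C^1$ for general $w$, and we need the extra structure of $w$.

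Consider first $w=\CC u$ or $w=Qu$. Then $w_k(\eta,t)=c_k(x_k,\eta,t)\,\psi_k(\om_k(x_k,\eta,t))$ with $\psi_k$ continuous. We change variables $\eta\mapsto s=\om_k(x_k,\eta,t)$. This is admissible because $\d_\eta\om_k=1/a_k\ne0$. The integral becomes $\int \tilde p(s,t)\psi_k(s)\dd s$, where the limits of integration and the kernel $\tilde p$ depend $C^1$-smoothly on $t$; this uses $p_{jk}\in C_t^1$, $\d_xp_{jk}\in C_{per}$ and $a_k\in C^1$. Hence this integral is $C^1$ in $t$. Composing with $\om_j(x_j,x,t)$ and multiplying by $c_j(x_j,x,t)$, both of which are $C^1$ in $(x,t)$, gives $Q\CC u,Q^2u\in C^1_{per}$.

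For $QBu$, $[Bu]_k(\eta,t)$ is an integral along the $k$-th characteristic of the components $u_l$ with $l\ne k$. We substitute it into the $\eta$-integral of $Q$ and write the result as a double integral over a two-dimensional region in the $(\xi,\tau)$-plane, using the change of variables $(\eta,\xi)\mapsto(\xi,\om_k(\xi,\eta,t))$. The Jacobian of this map is nonzero. After the substitution, the $t$-dependence sits only in smooth kernels and in $C^1$ region boundaries. Differentiating in $t$ therefore produces only continuous terms, and $QBu\in C^1_{per}$.

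\emph{Terms with $B$ on the left.} Next we treat $B\CC u$, $BQu$ and $B^2u$. For each of them we have to show $\d_t$-differentiability, since $\d_x$ then follows from $(\d_t+a_j\d_x)[Bw]_j=$ continuous. In $[Bw]_j$ the integrand involves $w_k(\xi,\om_j(\xi,x,t))$ with $k\ne j$.

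For $w=\CC u$, we have $w_k(\xi,\om_j(\xi))=c_k\cdot[Ru]_k(\om_k(x_k,\xi,\om_j(\xi,x,t)))$. We change variables $\xi\mapsto s=\om_k(x_k,\xi,\om_j(\xi,x,t))$. Its derivative is
\[
\frac{\d_\xi \om_k}{\ldots}\sim \frac1{a_j}-\frac1{a_k}\ne0,
\]
which is exactly the strict hyperbolicity \reff{aj}. This moves all dependence on $(x,t)$ into the limits and smooth kernels, so $B\CC u\in C^1$.

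The same change of variables handles $BQu$, since $Qu$ has the same structure as $\CC u$, namely a smooth factor times a continuous function of $\om_k(x_k,\cdot)$.

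For $B^2u$ we write a double integral in $(\xi,\xi')$ and apply the same transversal change of variables in the inner variable. The inner integrand contains $u_l$ along the $k$-th characteristic with $l\ne k$. We rewrite the double integral as an integral over a region in $(\xi',\tau)$, where the map $\xi\mapsto\om_k(\xi',\xi,\om_j(\xi,x,t))$ is invertible by $a_j\ne a_k$. The case $l=j$ is allowed.

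\emph{Main obstacle.} The hardest step is expected to be the $B^2u$ and $QBu$ cases. The point is to check that after the change of variables, differentiating the region boundaries and the kernels in $t$ uses only $a\in C^1$, $b_{jk}\in C^1$, $p_{jk}\in C^1_t$, $\d_x p_{jk}\in C$ and $m_{jk}\in C$, and never derivatives of $u$. We will also need to track the zero-order terms coming from $\F$; these are absent here because $M$ is contained in $\F$, not in $\E$.
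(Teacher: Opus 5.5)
Your proposal is correct, but it realizes the smoothing by a different technique than the paper, although both rest on the same transversality facts.

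\textbf{The paper's route.} It never changes variables in this claim. It approximates $u$ by $u^l\in C^1_{per}(\Pi;\R^n)$ and differentiates each composition in $t$. It then rewrites the term containing $\d_2u^l_i$ as a total derivative in the transversal integration variable: in $\eta$ via \reff{om23} for the $Q$-terms, and in $\xi$ via \reff{eqwn} for the $B$-terms, which is where $a_j\ne a_k$ enters. After integrating by parts, it passes to the limit in derivative-free formulas such as \reff{PP}. Identities like \reff{directional} then supply $\d_x$.

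\textbf{Your route.} You substitute so that the rough function ($[Ru]_k$, $\psi_k$, or $u_i$) no longer depends on $(x,t)$, and then differentiate under the integral sign. This has three advantages:
\begin{itemize}
\item It avoids the approximation and limit argument entirely.
\item For the $Q$-terms it reduces everything to the one-variable function $\Phi_j(\tau)=\sum_k\int_0^1p_{jk}(\eta,\tau)w_k(\eta,\tau)\dd\eta$, because $[Qw]_j(x,t)=c_j(x_j,x,t)\Phi_j(\om_j(x_j,x,t))$.
\item For $B\CC u$ it uses only continuity of $r_{jk}$, whereas the paper's formula \reff{BC0} differentiates $r_{jk}$.
\end{itemize}

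\textbf{The check you should make explicit.} The price of your route is that you must invert maps such as $\xi\mapsto\om_k(x_k,\xi,\om_j(\xi,x,t))$ and differentiate the resulting Jacobians in $t$. These Jacobians contain $\d_3\om_k$, which is an exponential of an integral of $\d_ta_k/a_k^2$ along a $k$-characteristic. A naive $t$-derivative would therefore require $\d_t^2a_k$. This is harmless: after the substitution, that integrand is evaluated along a characteristic pinned by the new variables. It is $\om_k(\zeta,x_k,s)$ for the $\CC$- and $Q$-inner terms, $\om_k(\zeta,\xi',\tau)$ for $B^2u$, and $\om_k(\zeta,\xi,\sigma)$ for $QBu$, and none of these depends on $(x,t)$. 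Hence only an integration limit moves, and $a\in C^1$ suffices. This is exactly the check your final paragraph anticipates, and it should be stated explicitly.

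\textbf{What the paper's route buys.} Its formulas contain $u$ only with explicit smooth kernels. They are reused verbatim in Claim~\ref{B22} for the $C^2$ bootstrap.

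\textbf{A small slip.} By \reff{dx}, $\d_\eta\om_k(x_k,\eta,t)=-\d_3\om_k(x_k,\eta,t)/a_k(\eta,t)$, not $1/a_k$. It is still nonzero, which is all you use.
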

\begin{subproof}		
We start with  $Q^2u$. 
Since $Q$ is a sum of 
operators  	$P_{jk}\in\mathcal{L}(C_{per}(\Pi))$ given by
$$
\left[P_{jk}v\right](x,t)=\displaystyle   c_j(x_j,x,t)\int_0^1p_{jk}(\eta,\omega_j(x_j,x,t))v_k(\eta,\omega_j(x_j,x,t))\dd \eta,	
$$
it  suffices to prove that the function
$$
\begin{array}{rcl}
\left[P_{jk}P_{ki}u\right](x,t)&=&\displaystyle c_j(x_j,x,t)\int_0^1p_{jk}(\eta,\omega_j(x_j))c_k(x_k,\eta,\omega_j(x_j))\\
&&\displaystyle\times\int_0^1\left[p_{ki}u_i\right](\rho,\om_k(x_k,\eta,\omega_j(x_j))\dd\rho\dd \eta
\end{array}
	$$ 
belongs to $C^1_{per}(\Pi;\R^n)$	for any $j\le n$, $k\le n$, and $i\ne k$.

Fix arbitrary $j\le n$, $k\le n$, and $i\ne k$. Due to the  equalities
\beq\label{dx}
\begin{array}{rcl}
\partial_x\omega_j(\xi,x,t) & = & \displaystyle-\frac{1}{a_j(x,t)} \exp \int_\xi^x 
\frac{\partial_ta_j(\eta,\omega_j(\eta))}{a_j(\eta,\omega_j(\eta))^2}\dd\eta,\\ [3mm]
\partial_t\omega_j(\xi,x,t) & = & \displaystyle\exp \int_\xi^x 
\frac{\partial_ta_j(\eta,\omega_j(\eta))}{a_j(\eta,\omega_j(\eta))^2}\dd\eta,\\ [3mm]
\left(\d_t+a_j(x,t)\d_x\right)\om_j(x_j,x,t)&=&0,\\ [3mm]
\left(\d_t+a_j(x,t)\d_x\right)c_j(x_j,x,t)&=&-b_{jj}(x,t)c_j(x_j,x,t),
\end{array}
\ee
the following formula is true for any $v\in C^1_{per}(\Pi;\R^n)$:
\beq\label{directional}
\left(\d_t+a_j(x,t)\d_x\right)\left[P_{jk}P_{ki}v\right](x,t)=-b_{jj}(x,t)\left[P_{jk}P_{ki}v\right](x,t).
\ee

In view of \reff{directional} and the density of $C^1_{per}(\Pi;\R^n)$  in $C_{per}(\Pi;\R^n)$,  it suffices to show that, for every sequence
$(u^l)\subset C^1_{per}(\Pi;\R^n)$ satisfying  $u^l\to u$ in $C_{per}(\Pi;\R^n)$, the sequence$\left(\partial_t[P_{jk}P_{ki}u^l]\right)_{l\ge1}$ converges in $C_{per}(\Pi)$. Let $(u^l)\subset C^1_{per}(\Pi;\R^n)$ be such a sequence. We then compute
$$
	\begin{array}{ll}
	\d_t\left[P_{jk}P_{ki}u^l\right](x,t)=
\displaystyle \int_0^1\int_0^1u^l_{i}(\rho,\om_k(x_k,\eta,\omega_j(x_j))
\\ [3mm]\qquad\quad\times\displaystyle
\frac{\dd}{\dd t}\Bigl[c_j(x_j,x,t)p_{jk}(\eta,\omega_j(x_j))c_k(x_k,\eta,\omega_j(x_j))
p_{ki}(\rho,\om_k(x_k,\eta,\omega_j(x_j))\Bigr]\dd\rho\dd \eta	
\\ [3mm]\qquad\displaystyle
+c_j(x_j,x,t)\int_0^1p_{jk}(\eta,\omega_j(x_j))c_k(x_k,\eta,\omega_j(x_j))
\\ [3mm]\qquad\quad\times\displaystyle
\int_0^1p_{ki}(\rho,\om_k(x_k,\eta,\omega_j(x_j)))\,
\d_2u^l_i(\rho,\om_k(x_k,\eta,\omega_j(x_j)))\d_3\om_k(x_k,\eta,\omega_j(x_j)))\d_t\omega_j(x_j)\dd\rho\dd \eta.
\end{array}
$$ 	
We transform the second summand on the right-hand side, as follows. Calculating the derivative
\begin{eqnarray}
		\frac{\dd}{\dd\eta}u_i^l(\rho,\om_k(x_k,\eta,\omega_j(x_j)))= \d_2u_i(\rho,\om_k(x_k,\eta,\omega_j(x_j)))\,\d_2\om_k(x_k,\eta,\omega_j(x_j))).
\end{eqnarray}
and combining it with the equality (which is true due to \reff{dx})
\beq\label{om23}
\d_3\om_k(\xi,\eta,\omega_j(x_j))=-a_j(\eta,\omega_j(x_j))\,\d_2\om_k(\xi,\eta,\omega_j(x_j)),
\ee
we derive the formula
$$
\d_2u^l_i(\rho,\om_k(x_k,\eta,\omega_j(x_j)))\d_3\om_k(x_k,\eta,\omega_j(x_j)))=-a_j(\eta,\omega_j(x_j))\frac{\dd}{\dd\eta}u^l_i(\rho,\om_k(x_k,\eta,\omega_j(x_j))).
$$
This leads to the following representation formula for
$\d_t\left[P_{jk}P_{ki}u^l\right](x,t)$:
\beq\label{PP}
\begin{array}{ll}
	\d_t\left[P_{jk}P_{ki}u^l\right](x,t)=
	\displaystyle \int_0^1\int_0^1u^l_{i}(\rho,\om_k(x_k,\eta,\omega_j(x_j))
	\\ [3mm]\qquad\quad\times\displaystyle
	\frac{\dd}{\dd t}\Bigl[c_j(x_j,x,t)p_{jk}(\eta,\omega_j(x_j))c_k(x_k,\eta,\omega_j(x_j))
	p_{ki}(\rho,\om_k(x_k,\eta,\omega_j(x_j))\Bigr]\dd\rho\dd \eta	
	\\ [3mm]\qquad\displaystyle
	-c_j(x_j,x,t)\d_t\omega_j(x_j)\int_0^1\Bigl[p_{jk}(\eta,\omega_j(x_j))c_k(x_k,\eta,\omega_j(x_j))
	\\ [3mm]\qquad\quad\times\displaystyle
	p_{ki}(\rho,\om_k(x_k,\eta,\omega_j(x_j)))\,
	a_j(\eta,\omega_j(x_j))u^l_i(\rho,\om_k(x_k,\eta,\omega_j(x_j)))\Bigr]_{\eta=0}^{\eta=1}\dd\rho
\\ [3mm]\qquad\displaystyle	+c_j(x_j,x,t)\d_t\omega_j(x_j)\int_0^1\int_0^1
\frac{\dd}{\dd\eta}\Bigl[
p_{jk}(\eta,\omega_j(x_j))c_k(x_k,\eta,\omega_j(x_j))
\\ [3mm]\qquad\quad\times\displaystyle
p_{ki}(\rho,\om_k(x_k,\eta,\omega_j(x_j)))\,
a_j(\eta,\omega_j(x_j))
\Bigr]u^l_i(\rho,\om_k(x_k,\eta,\omega_j(x_j)))\dd \eta\dd\rho.
\end{array}
\ee
Using the regularity and periodicity of the coefficients of the original problem, together with uniform convergence of $u^l\to u$ in $C_{per}(\Pi)$, we conclude that $\d_t\left[P_{jk}P_{ki}u^l\right]$
converges uniformly to $\d_t\left[P_{jk}P_{ki}u\right]$ in $C_{per}(\Pi)$.
This establishes the desired $ C^1_{per}(\Pi;\R^n)$-regularity of $Q^2u$.

	Next, we give the proof for the function $B^2u$.
To this end,  for fixed $j\le n$, we consider the representation of
$\left[B^2u\right]_j(x,t)$ obtained after changing the order of
integration:
	\begin{equation}\label{D11}
		\begin{array}{rcl}
			\left[B^2u\right]_j(x,t)
			=\displaystyle\sum_{k\not=j}\sum_{i\not=k}
			\int_{x_j}^x \int_\eta^x d_{jki}(\xi,\eta,x,t)\,
			u_i(\eta,\omega_k(\eta,\xi,\omega_j(\xi))) \dd \xi \dd \eta,
			\nonumber
		\end{array}
	\end{equation}
	where
	\begin{eqnarray*}
		d_{jki}(\xi,\eta,x,t)
		=d_j(\xi,x,t)\,d_k(\eta,\xi,\omega_j(\xi))\,b_{jk}(\xi,\omega_j(\xi))\,b_{ki}(\eta,\omega_k(\eta,\xi,\omega_j(\xi))).
	\end{eqnarray*}
	Because of \reff{cd} and \reff{dx}, we have
\beq\label{dt}
\begin{array}{rcl}
	\left(\d_t+a_j(x,t)\d_x\right)d_j(x_j,x,t)&=&-b_{jj}(x,t)d_j(x_j,x,t),\\ [3mm]
	\left(\d_t+a_j(x,t)\d_x\right)d_{jki}(x_j,x,t)&=&-b_{jj}(x,t)d_{jki}(x_j,x,t).
\end{array}
\ee	
Hence, for fixed  $j\le n$ and  $v\in C^1_{per}(\Pi;\R^n)$, it holds
\beq\label{directional1}
\left(\d_t+a_j(x,t)\d_x\right)\left[B^2v\right]_j(x,t)=-b_{jj}(x,t)\left[B^2v\right]_j(x,t).
\ee	

Similarly to the proof for the function $Q^2u$, in view of
\reff{directional1}, 
the proof for $B^2u$ is complete once we show that $\d_t\left[B^2u^l\right]_j$ converges in $C_{per}(\Pi)$ for every sequence $(u^l)\subset C^1_{per}(\Pi;\R^n)$
converging to $u$ in $C_{per}(\Pi;\R^n)$. To this end,  for fixed $j\le n$ and $v\in C^1_{per}(\Pi;\R^n)$, we  first compute the derivative 
\beq\label{dtB2}
\begin{array}{ll}
		\d_t\left[B^2v\right]_j(x,t)
	=\displaystyle\sum_{k\neq j}\sum_{i\neq k}\int_{x_j}^x\int_{\eta}^{x} \frac{\dd}{\dd t} d_{jki}(\xi,\eta,x,t)\, v_i(\eta,\omega_k(\eta,\xi,\omega_j(\xi))) \dd\xi \dd\eta
	\nonumber\\ [5mm]\quad
	+\displaystyle\sum_{k\neq j}\sum_{i\neq k}\int_{x_j}^x\int_{\eta}^{x} d_{jki}(\xi,\eta,x,t) \,
	\d_t\omega_k(\eta,\xi,\omega_j(\xi))\,\d_t\omega_j(\xi)\,\d_2v_i(\eta,\omega_k(\eta,\xi,\omega_j(\xi))) \dd\xi \dd\eta. 
\end{array}
\ee
We transform the second term on the right-hand side by using \reff{char} and \reff{dx} as follows:
\begin{eqnarray}
	\lefteqn{
		\frac{\dd}{\dd\xi} v_i(\eta,\omega_k(\eta,\xi,\omega_j(\xi)))} \nonumber \\ &&
	=\Bigl[\d_x\omega_k(\eta,\xi,\omega_j(\xi))+\d_t\omega_k(\eta,\xi,\omega_j(\xi))\d_{\xi}\omega_j(\xi)\Bigr] \d_2v_i(\eta,\omega_k(\eta,\xi,\omega_j(\xi))) \label{eqwn}
	\\ &&
	=\left ( \frac{1}{a_j(\xi,\omega_j(\xi))}-\frac{1}{a_k(\xi,\omega_j(\xi))}\right ) \d_t\omega_k(\eta,\xi,\omega_j(\xi))\,\d_2v_i(\eta,\omega_k(\eta,\xi,\omega_j(\xi))). \nonumber
\end{eqnarray}
Hence, 
$$
	\d_t\omega_k(\eta,\xi,\omega_j(\xi))\,\d_2v_i(\eta,\omega_k(\eta,\xi,\omega_j(\xi)))
	=\displaystyle\frac{a_j(\xi,\omega_j(\xi))a_k(\xi,\omega_j(\xi))}{a_k(\xi,\omega_j(\xi))-a_j(\xi,\omega_j(\xi))} \frac{\dd}{\dd\xi} v_i(\eta,\omega_k(\eta,\xi,\omega_j(\xi))).
$$
Set
$$
\tilde{d}_{jki}(\xi,\eta,x,t)
=d_{jki}(\xi,\eta,x,t)\d_t\omega_j(\xi)\frac{a_j(\xi,\omega_j(\xi))a_k(\xi,\omega_j(\xi))}{a_k(\xi,\omega_j(\xi))-a_j(\xi,\omega_j(\xi))}.
$$
and rewrite \reff{dtB2} as follows:
\begin{eqnarray*}
	\lefteqn{
		\d_t[(B^2v)_j(x,t)]} \\ &&
	= \displaystyle \sum_{k\neq j}\sum_{i\neq k}\int_{x_j}^x\int_{\eta}^{x} \frac{\dd}{\dd t} d_{jki}(\xi,\eta,x,t)\, v_i(\eta,\omega_k(\eta,\xi,\omega_j(\xi))) \dd\xi \dd\eta
	\\ &&\quad
	+\displaystyle \sum_{k\neq j}\sum_{i\neq k}\int_{x_j}^x\int_{\eta}^{x}\tilde{d}_{jki}(\xi,\eta,x,t)\frac{\dd}{\dd\xi} v_i(\eta,\omega_k(\eta,\xi,\omega_j(\xi))) \dd\xi \dd\eta
	\\ &&
	=\displaystyle \sum_{k\neq j}\sum_{i\neq k}\int_{x_j}^x\int_{\eta}^{x} \frac{\dd}{\dd t} d_{jki}(\xi,\eta,x,t)\, v_i(\eta,\omega_k(\eta,\xi,\omega_j(\xi))) \dd\xi \dd\eta
	\\ &&\quad
	-\displaystyle \sum_{k\neq j}\sum_{i\neq k}\int_{x_j}^x\int_{\eta}^{x}\d_{\xi}\tilde{d}_{jki}(\xi,\eta,x,t)\,v_i(\eta,\omega_k(\eta,\xi,\omega_j(\xi))) \dd\xi \dd\eta
	\\ &&\quad
	+\displaystyle \sum_{k\neq j}\sum_{i\neq k}\int_{x_j}^x\left [\tilde{d}_{jki}(\xi,\eta,x,t)\, v_i(\eta,\omega_k(\eta,\xi,\omega_j(\xi)))\right ]_{\xi=\eta}^{\xi=x} \dd\eta.
\end{eqnarray*}
According to this representation, we conclude that the function $\d_t[(B^2u^l)_j(x,t)]$ converges to 
$\d_t[(B^2u)_j(x,t)]$ as $l\to\infty$ in $C_{per}(\Pi)$ , which completes the proof for $B^2u$.

For the function $QBu$, we start from the representation obtained from
the formula for $QBu$  after changing the order of integration, namely:
\beq\label{QB}
\begin{array}{rcl}
	[QBu]_{j}(x,t)
	&=&\displaystyle-c_{j}(x_j,x,t)\sum_{k=1}^n\sum_{i\neq  k}
	\int_{0}^{1}\int_{\xi}^{1-x_k}p_{jk}(\eta,\omega_j(x_j))\,d_{k}(\xi,\eta,\omega_j(x_j))   \\ [2mm]
	&&\displaystyle\times \,b_{ki}(\xi,\omega_k(\xi,\eta,\omega_j(x_j)))\,u_i(\xi,\omega_k(\xi,\eta,\omega_j(x_j)))\dd\eta \dd\xi.   
\end{array}
\ee
Again, for $j\le n$ and $v\in C^1_{per}(\Pi;\R^n)$, it holds
\beq\label{direct_QB}
\left(\d_t+a_j(x,t)\d_x\right)\left[QBv\right]_j(x,t)=-b_{jj}(x,t)\left[QBv\right]_j(x,t).
\ee	
Denote by $Q_{jki}\in\mathcal{L}(C_{per}(\Pi))$ the double-integral operators appearing on the right-hand side of~\reff{QB}. Similarly to the above, for any fixed $j\le n$, $k\le n$,  $i\ne k$, and $v\in C^1_{per}(\Pi;\R^n)$, we compute the derivative
\beq\label{Qjki}
\begin{array}{rcl}
\d_t[Q_{jki}v](x,t)
&	=&\displaystyle\int_{0}^{1}\int_{\xi}^{1-x_k}\frac{\dd}{\dd t}\Bigl[p_{jk}(\eta,\omega_j(x_j))\,d_{k}(\xi,\eta,\omega_j(x_j))\, b_{ki}(\xi,\omega_k(\xi,\eta,\omega_j(x_j))) \Bigr] \\ [5mm]
&&\qquad\qquad\displaystyle\times \,v_i(\xi,\omega_k(\xi,\eta,\omega_j(x_j)))\dd\eta \dd\xi \\ [4mm]
&&	\displaystyle +\int_{0}^{1}\int_{\xi}^{1-x_k}p_{jk}(\eta,\omega_j(x_j))\,d_{k}(\xi,\eta,\omega_j(x_j))\, b_{ki}(\xi,\omega_k(\xi,\eta,\omega_j(x_j)))  \\ [5mm]
&&	\qquad\qquad\displaystyle\times 
\,\d_2v_i(\xi,\omega_k(\xi,\eta,\omega_j(x_j)))\,\d_3\omega_k(\xi,\eta,\omega_j(x_j))\,\d_t\omega_j(x_j)\dd\eta \dd\xi
\end{array}
\ee
and, similarly to the above, we derive a suitable representation formula for $\d_2v_i$. Specifically,
$$
\frac{\dd}{\dd\eta}v_i(\xi,\omega_k(\xi,\eta,\omega_j(x_j)))=\d_2v_i(\xi,\omega_k(\xi,\eta,\omega_j(x_j)))
\,\d_2\omega_k(\xi,\eta,\omega_j(x_j))
$$
Using  the identity \reff{om23}, we obtain
$$
\d_2v_i(\xi,\omega_k(\xi,\eta,\omega_j(x_j)))\,\d_3\omega_k(\xi,\eta,\omega_j(x_j))=-a_j(\eta,\omega_j(x_j))\,\frac{\dd}{\dd\eta}v_i(\xi,\omega_k(\xi,\eta,\omega_j(x_j))).
$$
Then the second integral term on the right-hand side of \reff{Qjki} takes the form
\beq\label{Q2}
\begin{array}{cc}
\displaystyle-\d_t\omega_j(x_j)\int_{0}^{1}\int_{\xi}^{1-x_k}a_j(\eta,\omega_j(x_j))\,p_{jk}(\eta,\omega_j(x_j))\,d_{k}(\xi,\eta,\omega_j(x_j))\, b_{ki}(\xi,\omega_k(\xi,\eta,\omega_j(x_j)))  \\ [5mm]
	\displaystyle\times 
\,\frac{\dd}{\dd\eta}v_i(\xi,\omega_k(\xi,\eta,\omega_j(x_j)))\dd\eta \dd\xi
\\ [5mm]
\displaystyle=- \d_t\omega_j(x_j) \int_{0}^{1}\Bigl[a_j(\eta,\omega_j(x_j))\,p_{jk}(\eta,\omega_j(x_j))\,d_{k}(\xi,\eta,\omega_j(x_j))\, \left[b_{ki}v_i\right](\xi,\omega_k(\xi,\eta,\omega_j(x_j)))\Bigr]_{\eta=\xi}^{\eta=1-x_k}\dd\xi\\ [5mm]
\displaystyle +\d_t\omega_j(x_j)\int_{0}^{1}\int_{\xi}^{1-x_k}
\frac{\dd}{\dd\eta}\Bigl[a_j(\eta,\omega_j(x_j))\,p_{jk}(\eta,\omega_j(x_j))\,d_{k}(\xi,\eta,\omega_j(x_j))\, b_{ki}(\xi,\omega_k(\xi,\eta,\omega_j(x_j)))
\Bigr]  \\ [5mm]
\displaystyle\times 
\,v_i(\xi,\omega_k(\xi,\eta,\omega_j(x_j)))\dd\eta \dd\xi.
\end{array}
\ee
We conclude that $\d_t[Q_{jki}u^l]$ converges to $\d_t[Q_{jki}u]$ in $C_{per}(\Pi)$ for all  $j,k,i\le n$ with $i\ne k$.  Combining \reff{QB} with \reff{direct_QB}, \reff{Qjki}, and \reff{Q2}, together with the regularity assumptions on the data of
the original problem, completes the proof of the required regularity of
$QBu$.

	We proceed similarly with the function $BQu$.  By the definitions of $B$ and $Q$, the composition $BQ$  is given by the formula
	\begin{equation}\label{dro}
		\begin{array}{rcl}
			[BQu]_{j}(x,t)
			&=&-\displaystyle \sum_{k\neq j}\sum_{i=1}^{n}\int_{0}^{1}\int_{x_j}^{x}d_{j}(\xi,x,t)\,b_{jk}(\xi,\omega_j(\xi))\,c_{k}(x_k,\xi,\omega_j(\xi))
		  \\
		&&	\displaystyle \times \,p_{ki}(\eta,\omega_k(x_k,\xi,\omega_j(\xi)))\,u_{i}(\eta,\omega_k(x_k,\xi,\omega_j(\xi)))\dd\xi \dd\eta, \;\;\; j\le n.    
		\end{array}
	\end{equation}
	The integral operators in (\ref{dro}) are  similar  to those contributed into $B^2u$ and, therefore,
	the proof follows along the same line as the proof for $B^2u$.

In the next step, we treat the function $B\CC u$. 	If $v\in C^1_{per}(\Pi)$, then
\beq\label{directional2}
\left(\d_t+a_j(x,t)\d_x\right)	[B\CC v]_{j}(x,t)=-b_{jj}(x,t)	[B\CC v]_{j}(x,t).
\ee	
Hence, it is sufficient to prove that $\d_t[B\CC u^l]_{j}$ converges to $\d_t[B\CC u]_{j}$ in $C_{per}(\Pi)$ for any  $j\le n$
and any sequence $(u^l)\subset C^1_{per}(\Pi;\R^n)$
converging to $u$ in $C_{per}(\Pi;\R^n)$. 

Note that, for any $j\le n$, the $j$-th component of the operator  $B\CC$ can be represented as a sum over $k\ne j$ of operators $R_{jk}\in\LL(C_{per}(\Pi))$, 
 given by  
	\beq \label{f315}
[R_{jk}v](x,t)=	\int_{x}^{x_j} e_{jk}(\xi,x,t)\,[Rv]_k(\omega_k(x_k,\xi,\omega_j(\xi))) \dd\xi,
	\ee
	where 
	$$
	e_{jk}(\xi,x,t)=d_j(\xi,x,t)\,b_{jk}(\xi,\omega_j(\xi))\,c_k(x_k,\xi,\omega_j(\xi)).
	$$
Hence, the proof  follows from  the convergence  of $\d_t[R_{jk}u^l]$ to $\d_t[R_{jk}u]$
for all $j,k\le n$ with $k\ne j$.
	To prove this convergence, fix $j\le n$, $k\ne j$, and $v\in C^1_{per}(\Pi)$, and compute the derivative
	\begin{equation}\label{BC0}	\begin{array}{ccc}
		&	\displaystyle \d_t[R_{jk}v](x,t)=\int_{x}^{x_j} \d_te_{jk}(\xi,x,t)\,[Rv]_k(\omega_k(x_k,\xi,\omega_j(\xi)))\dd\xi&
		\\ [4mm]&
		+\displaystyle \int_{x}^{x_j} e_{jk}(\xi,x,t)\,
		\d_3\omega_k(x_k,\xi,\omega_j(\xi))\,\d_t\omega_j(\xi)
	\displaystyle	\left[R^\prime v\right]_k(\omega_k(x_k,\xi,\omega_j(\xi))) \dd\xi&	\\ [4mm]&
	+\displaystyle \int_{x}^{x_j} e_{jk}(\xi,x,t)
	\d_3\omega_k(x_k,\xi,\omega_j(\xi))\d_t\omega_j(\xi)
	\displaystyle	 \left[R \,\d_2v\right]_k(\omega_k(x_k,\xi,\omega_j(\xi))) \dd\xi,&
\end{array}
	\end{equation}
where
\begin{eqnarray*}
	\displaystyle \left[R^\prime v\right]_k(t)=\sum\limits_{i=m+1}^nr_{ki}^\prime(t) v_i(0,t)+\sum\limits_{i=1}^mr_{ki}^\prime(t) v_i(1,t).
\end{eqnarray*}
We now transform the third integral in \reff{BC0} using the following
representation of the summands labeled by $i\le n$ that contribute to $\left[R \,\d_2v\right]_k(\omega_k(x_k,\xi,\omega_j(\xi)))$:
		\begin{eqnarray*}
		\lefteqn{
	r_{ki}(\omega_k(x_k,\xi,\omega_j(\xi)))\,\d_2v_i(1-x_i,\omega_k(x_k,\xi,\omega_j(\xi))}
		\\ &&
		=\frac{[a_ja_k](\xi,\omega_j(\xi))}{[a_j-a_k](\xi,\omega_j(\xi))\, \d_3\omega_k(x_k,\xi,\omega_j(\xi))}
	r_{ki}(\omega_k(x_k,\xi,\omega_j(\xi)))\frac{\dd}{\dd\xi}\, v_i(1-x_i,\omega_k(x_k,\xi,\omega_j(\xi)),
	\end{eqnarray*}
the formula being true due to \reff{char}, \reff{dx}, and \reff{dt}.
		Set
	$$
	\tilde{e}_{jk}(\xi,x,t)
	=\frac{e_{jk}(\xi,x,t)\,	\d_t\omega_j(\xi)\,[a_ja_k](\xi,\omega_j(\xi))}{[a_j-a_k](\xi,\omega_j(\xi)) 
		}
	$$
	and rewrite the third summand in \reff{BC0}, after integration by parts, as follows:
	\begin{eqnarray}
	\lefteqn{	\displaystyle\Bigl[\tilde e_{jk}(\xi,x,t) \left[Rv\right]_k(\omega_k(x_k,\xi,\omega_j(\xi)))\Bigr]_{\xi=x}^{\xi=x_j}}
		\nonumber\\  	&&
		-\displaystyle\sum_{i=1}^{n}\int_{x}^{x_j}\frac{\dd}{\dd\xi}\Bigl(\tilde{e}_{jk}(\xi,x,t)\,
			r_{ki}(\omega_k(x_k,\xi,\omega_j(\xi)))
		\Bigr)v_i(1-x_i,\omega_k(x_k,\xi,\omega_j(\xi))\dd\xi.
		\label{BC1}
	\end{eqnarray}
	The desired convergence  of $\d_t[R_{jk}u^l]$ to $\d_t[R_{jk}u]$ now easily follows from \reff{BC0} and \reff{BC1}. 
	
	The claimed regularity of  $B\CC u$ follows by an analogous argument.
		\end{subproof}
	
	\begin{claim} \label{C1}
The continuous solution $u$ to the problem  (\ref{lin})--\reff{per} belongs to $C^1_{per}(\Pi;\R^n)$.
\end{claim}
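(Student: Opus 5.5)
We apply Lemma~\ref{lem:smoothing} with $V=C_{per}(\Pi;\R^n)$, $U=C^1_{per}(\Pi;\R^n)$, $\E=Q+B$, $m=1$, and $k=2$. The equation \reff{oper} gives $u-\CC u-\E u=\H h+\F(f,u)$. It therefore suffices to check that this right-hand side belongs to $U$, and then to verify Conditions~1--3 of the lemma. Condition~1 with $m=1$ is exactly Claim~\ref{I-C}. Condition~3 with $m=1$ needs only $\E\CC u$ and $\E^2u$ in $U$, because the factor $\sgn(m-1)=0$ removes the term $\CC\E u$. Expanding $\E\CC=Q\CC+B\CC$ and $\E^2=Q^2+B^2+QB+BQ$, this is Claim~\ref{B2}, applied to an arbitrary $u\in V$ (its proof never used that $u$ solves the problem).

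Condition~2 asks that $\CC$, $Q$, and $B$ map $C^1_{per}$ into itself. This follows directly from the formulas \reff{CDF}. The functions $\om_j$, $c_j$, $d_j$ are $C^1$ in $(\xi,x,t)$ because $a_j,b_{jj}\in C^1$, using \reff{dx}. In addition, $r_{jk}\in C^1$, $p_{jk}\in C_t^1$ with $\d_xp_{jk}$ continuous, and $b_{jk}\in C^1$. Differentiating under the integral sign is then legitimate for $C^1$ arguments.

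The main remaining point is that $\H h+\F(f,u)\in C^1_{per}$ for the given continuous $u$.
\begin{itemize}
\item $\H h$ is $C^1$ since $h\in C^1$ and $c_j$, $\om_j$ are $C^1$.
\item For $\F(f,\cdot)$, first show that $Mu\in C_{per}\cap C^1_x$ when $m_{jk}\in C_x^1$, and that $\d_x[Mu]$ is continuous.
\item Next, compute $\d_t$ of $\int_{x_j}^x d_j(\xi,x,t)g(\xi,\om_j(\xi,x,t))\dd\xi$, where $g=f_j-[Mu]_j$ is $C^1$ in $t$ only through $f$. The $\d_t$ derivative falls on $g$ through $\d_2 g\cdot\d_t\om_j$. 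For the $f$-part, $f_j\in C^1_t$ suffices. For the $Mu$-part, rewrite the integrand via $\frac{\dd}{\dd\xi}g(\xi,\om_j(\xi))=\d_1g+\d_2g/a_j$. This converts the $t$-derivative into a $\xi$-derivative plus $\d_1 g$, which is continuous since $\d_x[Mu]$ exists, and then integrate by parts.
\item Finally, the $x$-derivative is recovered from the identity $(\d_t+a_j\d_x)[\F]_j=-b_{jj}[\F]_j+g$, so both partial derivatives are continuous.
\end{itemize}
All periodicity properties are inherited from the data.

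With these ingredients, Lemma~\ref{lem:smoothing} yields $u\in C^1_{per}(\Pi;\R^n)$. The expected obstacle is the $C^1$-regularity of the $Mu$-contribution inside $\F$, since $u$ is merely continuous there. This is handled by the integration-by-parts trick described above: it uses only $x$-smoothness of $m_{jk}$ and the continuity of $u$, never a derivative of $u$. This argument would be worked out explicitly.
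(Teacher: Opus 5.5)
Your proposal is correct; it follows the paper's skeleton and differs only in how it handles the $Mu$-part of $\F$. The shared skeleton is Lemma~\ref{lem:smoothing} with $V=C_{per}(\Pi;\R^n)$, $U=C^1_{per}(\Pi;\R^n)$, $\E=Q+B$, $m=1$, $k=2$. Condition~1 comes from Claim~\ref{I-C}, Condition~3 from Claim~\ref{B2}, and Condition~2 is read off from \reff{CDF}. Everything then reduces to $\H h+\F(f,u)\in C^1_{per}$; you also check $\H h$, which the paper leaves implicit.

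\textbf{The paper's route for the $Mu$-term.} It interchanges the order of integration in \reff{m} and substitutes $\tau=\om_j(\xi,x,t)$, arriving at \reff{mm}. There $u_k(\eta,\tau)$ is evaluated at points independent of $(x,t)$. The variables $(x,t)$ enter only through the integration limits and the kernel $a_j d_j\, m_{jk}(\eta,\widetilde\om_j(\tau,x,t),\tau)$. So one differentiates under the integral sign directly, with no approximation.

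\textbf{Your route.} You reuse the integration-by-parts-along-characteristics device from Claim~\ref{B2}. You trade $\d_2[Mu]_j\,\d_t\om_j$ for $a_j\,\d_t\om_j\bigl(\frac{d}{d\xi}[Mu]_j(\xi,\om_j(\xi))-\d_1[Mu]_j\bigr)$, using that $\d_x[Mu]_j$ is continuous for continuous $u$. Both routes use the same hypothesis on $m_{jk}$, namely a continuous derivative in its second argument.

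\textbf{What you need to fix.} As written your computation is only formal, because $\d_2[Mu]_j$ need not exist when $u$ is merely continuous. You must carry it out for $C^1_{per}$ approximations $u^l\to u$. Then the integrated-by-parts formula for $\d_t[\F(f,u^l)]_j$ involves only $u^l$ and $\d_x[Mu^l]$, so it converges uniformly. Via $(\d_t+a_j\d_x)[\F]_j=-b_{jj}[\F]_j+g$, the same holds for $\d_x[\F(f,u^l)]_j$, and you conclude exactly as in Claim~\ref{B2}.

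\textbf{What each approach buys.} The paper's change of variables gives an explicit representation that needs no limiting step. That representation is reused directly in Claim~\ref{7} for the $C^2$ step. Your version treats $\F$ by the same mechanism as $B^2u$ and $B\CC u$.
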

	\begin{subproof} Note that Condition 2 of Lemma \ref{lem:smoothing}
		follows immediately from the definition~\reff{CDF} of the operators
		$\CC$, $Q$ and $B$. Hence, in view of
		  Claims \ref{I-C} and \ref{B2}, the operator equation \reff{oper}, and
		 Lemma \ref{lem:smoothing}, it remains to show that 
$\F(f,u)\in C^1_{per}(\Pi;\R^n)$. Due to~\reff{CDF}, it suffices to show that the function
\beq\label{m}
\begin{array}{ll}
\displaystyle\int_{x_j}^{x}d_j(\xi,x,t) \left[Mu\right]_j(\xi,\omega_j(\xi,x,t))\dd\xi\\ [3mm]
\displaystyle\qquad=		\sum_{k=1}^{n}\int_{x_j}^{x}  d_j(\xi,x,t)\int_{0}^{\xi}m_{jk}(\eta,\xi,\omega_j(\xi,x,t))\,u_k(\eta,\omega_j(\xi,x,t))\dd\eta\dd\xi
\end{array}
	\ee
		belongs to $C^1_{per}(\Pi;\R^n)$ for every $j\le n$. 
The conclusion then follows, as above, from the fact that the double integrals in \reff{m} are taken over two transverse curves. In
particular, transversality yields the following nondegenerate change of
variables for every $j\le n$:
\begin{equation}\label{change1}
	\xi\in[0,1]\mapsto\tau=\omega_j(\xi,x,t).
\end{equation}
On the account of  \reff{char}, this implies that
$
\dd\xi=a_j(\widetilde\om_j(\tau),\tau)\dd\tau,
$
where $\xi=\widetilde\om_j(\tau)=\widetilde\om_j(\tau,x,t)$ denotes the inverse to \reff{change1}.  Hence,
$
\om_j\left(\widetilde\om_j(\tau,x,t),x,t)\right)=\tau.
$
Let $j\le m$ be fixed (the case $j> m$ can be treated similarly).
After interchanging the order of integration and performing the change of variables \reff{change1}, the right-hand side of \reff{m} takes the form
\beq\label{mm}
\sum_{k=1}^{n}\int_{0}^{x}\int_{\om_j(\eta)}^{t}a_j(\widetilde\om_j(\tau),\tau)\,
d_j(\widetilde\om_j(\tau),x,t)\,m_{jk}(\eta,\widetilde\om_j(\tau),\tau)\,u_k(\eta,\tau)\dd\tau\dd\eta.
\ee
This representation and the regularity conditions  on the data imply that the function
\reff{m} belongs to  $C^1_{per}(\Pi)$, as desired.

Therefore, applying Lemma \ref{lem:smoothing} completes the proof.
		\end{subproof}

The next two claims are devoted to the 	$C^2_{per}(\Pi;\R^n)$-regularity of the function $u\in C^1_{per}(\Pi;\R^n)$.
Now, we apply the  abstract Lemma \ref{lem:smoothing} with $m=1$, $k=2$, $V=C^1_{per}(\Pi;\R^n)$, and $U=C^2_{per}(\Pi;\R^n)$.
	\begin{claim} \label{I-C_1}
		 If for $v\in C^1_{per}(\Pi;\R^n)$ it holds $(I-\CC)v\in C^2_{per}(\Pi;\R^n)$, then  $v\in C^2_{per}(\Pi;\R^n)$.
	\end{claim}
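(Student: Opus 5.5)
We mimic the proof of Claim~\ref{I-C}, one derivative higher. Let $v\in C^1_{per}(\Pi;\R^n)$ with $(I-\CC)v\in C^2_{per}(\Pi;\R^n)$, so $v=\CC v+g$ with $g\in C^2_{per}(\Pi;\R^n)$. Define the boundary traces $z$ and $\tilde g$ as in \reff{z}; now $z\in C^1_{per}(\R;\R^n)$ and $\tilde g\in C^2_{per}(\R;\R^n)$, and \reff{simpl1} gives $z=G_0z+\tilde g$. The representation $[\CC v]_j(x,t)=c_j(x_j,x,t)[Rv]_j(\omega_j(x_j,x,t))$, in which $c_j$, $\omega_j$ and $r_{jk}$ are $C^2$ by the assumptions of Theorem~\ref{thm:lin} (with $l\ge2$), shows that $\CC v\in C^2$ as soon as $z\in C^2$. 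So the claim reduces to proving $z\in C^2_{per}(\R;\R^n)$.

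Since $z\in C^1$, we differentiate \reff{simpl1} using \reff{dtG0R}:
$$
z'=G_1z'+W_1z+\tilde g' .
$$
Here $W_1z\in C^1$ because $z\in C^1$ and the coefficient of $W_1$ in \reff{W} involves $t$-derivatives of $c_j$ and $r_{jk}$ of first order only, hence is $C^1$ under $C^2$ data. Also $\tilde g'\in C^1$. Thus $y:=z'\in C_{per}(\R;\R^n)$ solves $y=G_1y+\tilde h$ with $\tilde h\in C^1_{per}$. This is the same structure as \reff{simpl1}, with $G_0$ replaced by $G_1$.

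Now rerun the argument of Claim~\ref{I-C} with $G_1$ in place of $G_0$. The analogue of \reff{dtG0R} reads $(G_1y)'=G_2y'+W_2y$ for $y\in C^1$, where $W_2$ is built like $W_1$ from $c_j^1$. The factor $\partial_t\omega_j(x_j,1-x_j,t)$, which by \reff{dx} equals $\exp\int\partial_ta_j/a_j^2$, converts $c^1_j$ into $c^2_j$; this is exactly why the exponent index increases by one at each differentiation. Condition \reff{nonrez} with $i=2$ gives $\|G_2\|<1$ by \reff{invert}, so $I-G_2$ is invertible on $C_{per}(\R;\R^n)$. Pick $w$ with $(I-G_2)w=W_2y+\tilde h'$. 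Approximate $y$ by $y^l\in C^1_{per}$. The duality computation with test functions $\varphi^k$ from \reff{ap} then shows $(I-G_2)(y^l)'\rightharpoonup(I-G_2)w$, hence $(y^l)'\rightharpoonup w$. Testing with $J_t$ gives $y(t)-y(0)=\int_0^tw$, so $y\in C^1$, i.e.\ $z\in C^2$.

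\emph{Main obstacle.} The delicate point is checking the commutation identity $(G_1y)'=G_2y'+W_2y$ with a bounded $W_2$, and verifying that $W_1z$ and $\tilde h$ really have the regularity claimed. This requires differentiating $c_j^1(x_j,1-x_j,t)$ and $r_{jk}(\omega_j(x_j,1-x_j,t))$, so it uses $a_j,b_{jj},r_{jk}\in C^2$ and the formula \reff{dx} for $\partial_t\omega_j$. The weak-limit step itself transfers verbatim from Claim~\ref{I-C}.
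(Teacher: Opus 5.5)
Your reduction matches the paper's proof of this claim. It uses the traces $z\in C^1_{per}$, the differentiated equation $y:=z'=G_1y+\tilde h$ with $\tilde h=W_1z+\tilde g'\in C^1_{per}$, and the commutation rule $(G_1y)'=G_2y'+W_2y$ coming from $c_j^1\,\d_t\om_j(x_j,1-x_j,\cdot)=c_j^2$. The paper likewise finishes by saying the rest follows as in Claim~\ref{I-C}. That last step, which you import verbatim, does not work. The pairing computation gives $\lim_l\langle\varphi,(I-G_2)(y^l)'\rangle=\langle\varphi,(I-G_2)w\rangle$ only for each fixed smooth $\varphi$, i.e.\ convergence in $\mathcal D'$. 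You need weak convergence in $C_{per}(\R;\R^n)$ because $J_t$ is integration against an indicator. Getting it means interchanging the limits in $l$ and $k$, which requires $\sup_l\|(y^l)'\|_\infty<\infty$. No such bound is available. Weakly convergent sequences are bounded, yet $y^l=y+l^{-1}\sin(l^2t)$ is an admissible approximation with unbounded derivatives even when $y$ is smooth. What the computation actually yields is $(I-G_2)(y'-w)=0$ in $\mathcal D'$, and deducing $y'=w$ from this is exactly the regularity you are trying to prove.

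This is not merely technical. Your argument uses only $\|G_2\|<1$, and with that alone the conclusion can fail. Take a model operator $[G_1\psi](t)=\sigma(t)\psi(\theta(t))$, where $\theta$ lifts a circle diffeomorphism with one repelling fixed point and one attracting fixed point $t^*$, $\theta'(t^*)=\mu<1$. Then $G_2\psi=\sigma\theta'\,\psi\circ\theta$. Choosing $\sigma=\kappa/\theta'$ with $\mu<\kappa<1$ gives $\|G_2\|=\kappa<1$ but $\sigma(t^*)>1$. The equation $y=G_1y$ then has continuous solutions of size $|t-t^*|^{\beta}$ along orbits tending to $t^*$, with $\beta=\ln(\kappa/\mu)/\ln(1/\mu)\in(0,1)$, so they are not $C^1$. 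The same objection applies to the paper's Claim~\ref{I-C}, where only $\|G_1\|<1$ is available.

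The claim itself is true, and the repair uses the hypothesis you left idle, namely \reff{nonrez} with $i=1$. Since $\|G_1\|<1$, $y$ is the unique continuous solution of $y=G_1y+\tilde h$. The map $\psi\mapsto G_1\psi+\tilde h$ sends $C^1_{per}(\R;\R^n)$ into itself. By your commutation rule it has Lipschitz constant at most $\max\{\|G_1\|+\gamma\|W_2\|,\|G_2\|\}$ for the norm $\|\psi\|_\infty+\gamma\|\psi'\|_\infty$, so it is a contraction there for small $\gamma>0$. Its fixed point is a continuous solution, hence equals $y$. Thus $z'\in C^1$, $z\in C^2$, and $v=\CC v+g\in C^2_{per}(\Pi;\R^n)$.
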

	
	\begin{subproof} 
				Let $v\in C^1_{per}(\Pi;\R^n)$ 
				be such that $(I-\CC)v\in C^2_{per}(\Pi;\R^n)$. Then 	there exists $g\in C^2_{per}(\Pi;\R^n)$
				such that  the  equalities \reff{simpl} hold. Hence, the functions $z$ and $\tilde g$, defined by \reff{z},
				obey the regularities $z\in C^1_{per}(\R;\R^n)$ and $\tilde g\in C_{per}^2(\R;\R^n)$.
				Accordingly to \reff{simpl},  the proof reduces to showing that $z\in C^2_{per}(\R;\R^n)$.  
			By Claim \ref{I-C} and its proof,	 $z$
			satisfies  \reff{zD} or, the same,
			\beq\label{I-G2}
	z^{\prime}=	G_1z^{\prime}+	W_1z+\tilde g^\prime.
			\ee
Therefore, it suffices to show that $z^{\prime}$, satisfying \reff{I-G2}, belongs to $C^1_{per}(\R;\R^n)$. 

			Note that $	W_1z+\tilde g^\prime\in C^1_{per}(\Pi;\R^n)$.
		Now, if $y\in C^1_{per}(\Pi;\R^n)$,  then, using \reff{Ci}, we can compute the following derivatives pointwise:
				\begin{equation}\label{dtG1}
				\displaystyle	\frac{\dd}{\dd t} [G_1y]_j(t) = [G_2y^{\prime}]_j(t)
				+	  [W_2 y]_j(t),
					\quad  j\le n,
				\end{equation}
				where 
				  the operator $W_2 \in \mathcal L(C_{per}(\R; \R^n))$ is defined by
				\beq\label{W2}
				[W_2 w]_j(t) =  c_j^1(x_j,1 - x_j,t) \sum\limits_{k=1}^n\frac{\dd}{\dd t}\Bigl[c_j^1(x_j,1 - x_j,t)\, r_{jk}(\omega_j(x_j, 1 - x_j, t))\Bigr]w_k(\omega_j(x_j, 1 - x_j, t))
				\ee
and the functions
$c_j^1$ are given by \reff{cd}.	

The remainder of the proof follows the same argument as the proof of  Claim~\ref{I-C}, but now in the setting \reff{I-G2}--\reff{W2} instead of
\reff{simpl1}--\reff{W}, with $z^\prime$, $W_1z+\tilde g^\prime$, $G_1$, $G_2$, and $W_2$ playing the roles of 
 $z$, $\tilde g_j$, $G_0$, $G_1$, and $W_1$, respectively.

This completes the proof.
	\end{subproof}

\begin{claim} \label{B22}
The functions $Q\CC u, B\CC u, Q^2u,B^2u,QBu$, and $BQu$ belong to $C^2_{per}(\Pi;\R^n)$. 
\end{claim}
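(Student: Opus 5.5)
We now know $u\in C^1_{per}(\Pi;\R^n)$ (Claim~\ref{C1}). The plan is to repeat the scheme of Claim~\ref{B2} one derivative higher. Each of the six compositions $\mathcal{K}\in\{Q\CC,B\CC,Q^2,B^2,QB,BQ\}$ satisfies the directional identity
$$(\d_t+a_j\d_x)[\mathcal{K}v]_j=-b_{jj}[\mathcal{K}v]_j,$$
as in \reff{directional}, \reff{directional1}, \reff{direct_QB}, \reff{directional2}. Differentiating this identity once more in $t$ expresses $\d_x\d_t$ and $\d_x^2$ of $[\mathcal{K}v]_j$ through $\d_t^2[\mathcal{K}v]_j$, $\d_t[\mathcal{K}v]_j$, $[\mathcal{K}v]_j$ and derivatives of $a_j,b_{jj}$. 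These coefficients are $C^1$ because $a_j,b_{jj}\in C^2$, which is available since $l\ge2$ whenever $C^2$ is the goal. Hence it suffices to show that $\d_t[\mathcal{K}u]_j\in C^1_{per}(\Pi)$.

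For $\d_t[\mathcal{K}u]$ we use the representation formulas already derived: \reff{PP} for $Q^2$, the integrated-by-parts formula after \reff{dtB2} for $B^2$, \reff{Qjki}--\reff{Q2} for $QB$, and \reff{BC0}--\reff{BC1} for $B\CC$ (analogously for $BQ$ and $Q\CC$). They show that $\d_t\mathcal{K}u$ is a finite sum of three kinds of terms, none containing derivatives of $u$:
\begin{itemize}
\item[(a)] operators of the same structural type as $Q^2$, $B^2$, $QB$, $BQ$, $B\CC$, $Q\CC$ (double integrals over transverse characteristics, or a $B$- or $Q$-type integral composed with a boundary trace through $R$), with kernels involving one more derivative of the data;
\item[(b)] boundary terms from the integrations by parts, which are single-integral operators of $B$- or $Q$-type applied to $u$, or to traces $u(1-x_i,\cdot)$;
\item[(c)] terms of the form $\d_t\omega_j\cdot(\dots)$.
\end{itemize}
Now that $u\in C^1$, we simply differentiate each term once more classically. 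For kernels in (a), the data regularity ($a_j,b_{jk},r_{jk}\in C^2$, $p_{jk}\in C^2_t$ with $\d_xp_{jk}\in C^1_t$) guarantees that the new kernels are $C^1$. For (b) and (c), $\d_t$ falls either on $C^1$ coefficients or on $u\in C^1$ composed with $C^1$ characteristics $\omega_j,\omega_k$, giving continuous functions. The integrals $\int_0^1(\cdot)\dd\eta$ in $Q$-type terms are unproblematic, since $\d_tu$ is continuous. Term (c) includes the derivative of $R$ applied to traces of $u$, which is $C^1$ in $t$ because $u\in C^1$.

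The main obstacle is bookkeeping of the data regularity. After the integration by parts in \reff{PP}, \reff{Q2}, and \reff{BC1}, kernels such as $\frac{\dd}{\dd\eta}[a_jp_{jk}c_k\cdots]$ already carry one derivative in $x$, or one along a characteristic. Differentiating them again in $t$ requires mixed derivatives such as $\d_t\d_xp_{jk}$, and $\d_t$ of $\tilde d_{jki}$ or $\tilde e_{jk}$ requires second derivatives of $a_j,a_k$ and of $\omega_j$. We will check term by term that exactly the hypotheses of Theorem~\ref{thm:lin} with $l=2$ (namely $\d_xp_{jk}\in C^1_t$, $a_j\in C^2$, $r_{jk}\in C^2$) suffice. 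The division by $a_k-a_j$ is harmless by \reff{aj}.

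Finally, periodicity in $t$ is inherited from the data and from $u$. This gives $Q\CC u,\ B\CC u,\ Q^2u,\ B^2u,\ QBu,\ BQu\in C^2_{per}(\Pi;\R^n)$.
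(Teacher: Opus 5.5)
Your proposal is correct and follows the paper's own argument: once $u\in C^1$, the integrated-by-parts formulas for $\d_t[\mathcal K u]$ (such as \reff{PP}) contain no derivatives of $u$ and can be differentiated once more, and the transport identity along the $j$-th characteristics then passes the $C^1$-regularity of $\d_t[\mathcal K u]_j$ on to $\d_x[\mathcal K u]_j$.

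One small correction, which also applies to \reff{directional1} and \reff{directional2}: for the compositions with outer operator $B$, i.e.\ $\mathcal K=B\mathcal K'$ with $\mathcal K'\in\{B,Q,\CC\}$, the variable upper limit of $\int_{x_j}^x$ contributes an extra term, $(\d_t+a_j\d_x)[B\mathcal K'v]_j=-b_{jj}[B\mathcal K'v]_j-\sum_{k\ne j}b_{jk}[\mathcal K'v]_k$. Since $\mathcal K'u\in C^1_{per}(\Pi;\R^n)$ for $u\in C^1_{per}(\Pi;\R^n)$, this term is harmless, and your reduction to showing $\d_t[\mathcal K u]_j\in C^1_{per}(\Pi)$ still goes through.
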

	\begin{subproof}
By Claim \ref{C1}, the continuous solution  $u$ belongs to $C^1_{per}(\Pi;\R^n)$. The claim now  follows by the same argument as in the proof of Claim  \ref{B2}. The only additional ingredient is that, besides the $C^1_{per}(\Pi;\R^n)$-regularity of $u$,
we use  the formulas for the $t$-derivatives and the directional derivatives along characteristic curves obtained for the functions $Q^2u$, $Q\CC u$, $B\CC u$, $B^2u$, $QBu$, and $BQu$.

For example, to prove  the $C^2$-regularity for $Q^2u$ (the remaining functions being treated analogously), we  apply the formula \reff{PP}, which is now valid with $u$ in place of $u^l$. Hence, the functions  $\d_t[P_{jk}P_{ki}u]$ belong to $C^1_{per}(\Pi)$. It then follows
from \reff{directional} and its $x$-derivative that $P_{jk}P_{ki}u\in C^2_{per}(\Pi)$. Consequently, $Q^2u\in C^2_{per}(\Pi;\R^n)$, as desired. 
	\end{subproof}

	\begin{claim} \label{7} The classical  $C^1_{per}(\Pi;\R^n)$-solution $u$ to the problem  (\ref{lin})--\reff{per} belongs to $C^2_{per}(\Pi;\R^n)$.
	\end{claim}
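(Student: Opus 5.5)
We will prove Claim~\ref{7} exactly as Claim~\ref{C1}: apply Lemma~\ref{lem:smoothing} with $V=C^1_{per}(\Pi;\R^n)$, $U=C^2_{per}(\Pi;\R^n)$, $m=1$, $k=2$, and $\E=Q+B$, to the operator equation \reff{oper}. By Claim~\ref{C1}, the solution $u$ lies in $V$. The equation \reff{oper} can then be written as
$$
u-\CC u-\E u=\H h+\F(f,u).
$$
So it suffices to check the three conditions of the lemma on the pair $(V,U)$ and to show that the right-hand side belongs to $U$.

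Condition~1 with $m=1$ is exactly Claim~\ref{I-C_1}; this is where the nonresonance condition \reff{nonrez} with $i=2$ enters, through $\|G_2\|<1$. Condition~3 requires $\E\CC u$ and $\E^2u$ to belong to $U$. Expanding $\E=Q+B$, this amounts to the $C^2$-regularity of $Q\CC u$, $B\CC u$, $Q^2u$, $B^2u$, $QBu$ and $BQu$, which is Claim~\ref{B22}. The term $\sgn(m-1)\CC\E u$ vanishes since $m=1$. We must also note that these functions are built from $u\in V$, and that Claim~\ref{B22} is applied to the fixed solution $u$, which is the only element the lemma's proof actually uses.

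Condition~2 requires that $\CC$, $Q$ and $B$ map $C^2_{per}$ into itself (and $C^1_{per}$ into itself, so that they act on $V$). This follows from the definitions \reff{CDF}: the functions $c_j$, $d_j$ and $\om_j$ are $C^2$ because $a_j,b_{jk}\in C^l_{per}$ with $l\ge2$; $r_{jk}\in C^2$; and $p_{jk}$ is $C^2$ in $t$, which is all that $Q$ needs because it integrates over $x$. For $B$, differentiating under the integral sign with a $C^2$ integrand gives the result.

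The remaining point, and the only real work, is to show that $\H h+\F(f,u)\in C^2_{per}$.
\begin{itemize}
\item $\H h$ is $C^2$ because $h_j\in C^l$ and $c_j,\om_j\in C^2$.
\item For the $f$-part of $\F$, we use $f_j\in C^{1}_{per}\cap C^2_t$. We first compute $\d_t$, which involves $\d_tf_j$ and coefficients that are $C^1$. We then use the directional identity $(\d_t+a_j\d_x)[\cdot]=-b_{jj}[\cdot]+f_j$ to recover the $x$-derivatives of the first derivative. Applying $\d_t$ once more is legitimate because $\d_t^2f_j$ exists and is continuous.
\item For the $M$-part, we use the representation \reff{mm} obtained via the transversal change of variables \reff{change1}. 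Now $u\in C^1$ and $m_{jk}\in C^1_{per}\cap C^2_x$, so one more differentiation of \reff{mm}, in both $x$ and $t$, yields continuous functions. Here we again combine a $t$-derivative with the directional derivative along characteristics, so that no second derivatives of $u$ appear.
\end{itemize}
With all three conditions verified and the right-hand side in $U$, Lemma~\ref{lem:smoothing} gives $u\in C^2_{per}(\Pi;\R^n)$.
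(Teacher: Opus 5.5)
Your proposal is correct and follows the paper's proof. It applies Lemma~\ref{lem:smoothing} with $V=C^1_{per}(\Pi;\R^n)$, $U=C^2_{per}(\Pi;\R^n)$, $m=1$, $k=2$ and $\E=Q+B$, takes Condition~1 from Claim~\ref{I-C_1} and Condition~3 from Claim~\ref{B22}, and uses the representation \reff{mm} with $u\in C^1_{per}$ to get $\F(f,u)\in C^2_{per}$. The additional details you give (Condition~2, the term $\H h$, and the $f$-part argument) are left implicit in the paper and are sound.
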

	
	\begin{subproof}
	First note that the function $\F(f,u)$ belongs to $C^2_{per}(\Pi;\R^n)$. This follows directly from the definition \reff{CDF} of $\F$,  the fact that $u\in C^1_{per}(\Pi;\R^n)$ and  the representation formula \reff{mm} for \reff{m}.
	 Combining this observation with
	Claims \ref{I-C_1} and \ref{B22},  and applying Lemma~\ref{lem:smoothing}
	  with $m=1$, $k=2$, $U=C_{per}^2(\Pi;\R^n)$, and $V=C^1_{per}(\Pi;\R^n)$,
	  we conclude that
	$u\in C^2_{per}(\Pi;\R^n)$. 
	\end{subproof}
	Proceeding in the same way as in the proof of the $C^2$-regularity of $u$, one obtains $u\in C^l_{per}(\Pi;\R^n)$ for any $l\ge 3$.
	This completes the proof of  Theorem \ref{thm:lin}.

\begin{rem}\label{reg}
	(On the optimality of the regularity assumptions in
	Theorem~\ref{thm:lin}).
	\rm
	For simplicity of exposition, Theorem~\ref{thm:lin} (and likewise
	Theorems~\ref{thm:semilin} and \ref{thm:quasilin}) is formulated under
	sufficient, though not optimal, regularity assumptions on the data.
	The optimal regularity requirements can be read off from the proof of
	Theorem~\ref{thm:lin} (and similarly for Theorems~\ref{thm:semilin} and
	\ref{thm:quasilin}). In particular, the $C^1$- and $C^2$-regularity
	statements of Theorem~\ref{thm:lin} can be refined as follows, and the
	same applies to higher regularity results.

	\begin{thm}\label{regC1}
	{\bf 1.}	
			Assume that,  for all $j,k\le n$, the coefficients
			$a_j$,   $b_{jk}$,  $p_{jk}$, $r_{jk}$, and $h_j$
			belong to  $C^{1}_{per}$, the coefficients $f_j$ belong to $C_{per}\cap C_t^1$,
			the coefficients	$m_{jk}$   belong to  $C_{per}\cap C_x^1$, all   in their respective domains.
			If the conditions \reff{aj} and \reff{nonrez}
			are satisfied 
			for all $j\le n$,  $x\in[0,1]$, $t\in\R$, and $i=1$,
			then any continuous  solution to the problem~(\ref{lin})--\reff{per} 
			belongs to~$C^1(\Pi;\R^n)$.\\
			{\bf 2.}
			Assume that,  for all $j,k\le n$, the coefficients
			 $r_{jk}$ and $h_j$
			belong to  $C^{2}_{per}$,  the coefficients $a_j$, $b_{jk}$ belong to $C^{1}_{per}$ with  $\d_ta_j, \d_tb_{jk}\in C^{1}_{per}$,  the functions $f_j$ belong to $C^1_{per}\cap C_t^2$,
			the coefficients	$m_{jk}$   belong to  $C_{per}\cap C_x^2$ and have the continuous derivatives $\d_tm_{jk}$, 
			and the coefficients $p_{jk}$ belong to $C_{per}\cap C_t^2$ with $\d_xp_{jk}\in C_t^{1}$, all   in their respective domains.
			If the conditions \reff{aj} and \reff{nonrez}
			are satisfied 
			for all $j\le n$,  $x\in[0,1]$, $t\in\R$, $i=1$, and $i=2$,
			then any continuous  solution to the problem (\ref{lin})--\reff{per} 
			belongs to~$C^2(\Pi;\R^n)$.
	\end{thm}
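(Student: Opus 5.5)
The plan is to rerun the proof of Theorem~\ref{thm:lin} while tracking, at each step, exactly which derivatives of the data are actually used. For Part~1, I will again apply Lemma~\ref{lem:smoothing} to the operator equation \reff{oper}, with $V=C_{per}(\Pi;\R^n)$, $U=C^1_{per}(\Pi;\R^n)$, $\E=Q+B$, $m=1$ and $k=2$.

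Condition~1 is Claim~\ref{I-C}. Its proof needs only three ingredients: formula \reff{dtG0R}, the bound $\|G_1\|<1$ (which is \reff{nonrez} for $i=1$), and $\tilde g\in C^1$. Formula \reff{dtG0R} requires $r_{jk}\in C^1$ and $c_j(x_j,1-x_j,\cdot)\in C^1$. By \reff{cd} and \reff{dx}, the latter needs $b_{jj}$ and $\d_ta_j$ to be differentiable along $t$ after composition with $\om_j$. That amounts to $a_j,b_{jj}\in C^1$, and $a_j\in C^1$ with $\d_ta_j$ continuous suffices for \reff{dx}. I will check that $C^1$ regularity of $c_j(x_j,x,t)$ in $(x,t)$ follows from $a_j,b_{jj}\in C^1$.

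Condition~3 is Claim~\ref{B2}. In the representation formulas \reff{PP}, \reff{dtB2}--\reff{eqwn}, \reff{Qjki}--\reff{Q2} and \reff{BC0}--\reff{BC1}, only first derivatives in $t$ and in the integration variable $\eta$ or $\xi$ of the kernels $c_j$, $d_j$, $p_{jk}$, $b_{jk}$, $r_{jk}$ and $a_j$ occur. So $p_{jk},b_{jk},r_{jk}\in C^1$ suffices. Together with the directional identities \reff{directional}, \reff{directional1}, \reff{direct_QB} and \reff{directional2}, this gives $C^1$ regularity of the six compositions.

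It remains to show that $\H h+\F(f,u)\in C^1$. For $\H h$ this needs only $h\in C^1$. For the $f$-part of $\F$, I write $\d_t$ and the characteristic derivative $\d_t+a_j\d_x$ separately. The characteristic derivative of $\int_{x_j}^x d_jf_j\,d\xi$ equals $f_j$ plus a multiple of the function itself, so it is continuous. The $\d_t$-derivative involves $\d_tf_j$ along the characteristic, so $f_j\in C_{per}\cap C^1_t$ suffices. For the $M$-part, the change of variables \reff{mm} places $x$ and $t$ in the limits and in $d_j(\widetilde\om_j(\tau),x,t)$ and $m_{jk}(\eta,\widetilde\om_j(\tau),\tau)$. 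Here the variable $\widetilde\om_j(\tau,x,t)$ enters the second argument of $m_{jk}$, which is the ``$x$'' slot. So I need $\d_xm_{jk}$ in the sense of $C^1_x$, matching the hypothesis $m_{jk}\in C_{per}\cap C^1_x$. Lemma~\ref{lem:smoothing} then yields $u\in C^1$.

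For Part~2, I start from $u\in C^1$ and apply Lemma~\ref{lem:smoothing} with $V=C^1_{per}$, $U=C^2_{per}$. In Claim~\ref{I-C_1}, differentiating \reff{I-G2} requires $G_1$ and $W_1$ to map $C^1$ into $C^1$, and requires \reff{dtG1}. Since $W_1$ contains $\frac{d}{dt}[c_jr_{jk}(\om_j)]$, this needs $r_{jk}\in C^2$, and also needs $c_j(x_j,1-x_j,\cdot)$ and $c^1_j$ to be $C^2$ in $t$. That in turn needs $\d_tb_{jj}$ and $\d_t^2a_j$, which explains the hypotheses $\d_ta_j,\d_tb_{jk}\in C^1$. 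The contraction bound uses \reff{nonrez} with $i=2$ for $G_2$, and $\tilde g\in C^2$ needs $h\in C^2$. For Claim~\ref{B22}, I differentiate once more the $t$-derivative formulas (now applied to $u\in C^1$ directly). This requires one extra $t$-derivative, together with one mixed $t$-and-integration-variable derivative, of the kernels $p_{jk},b_{jk},a_j$. That is why $p_{jk}\in C_t^2$ with $\d_xp_{jk}\in C^1_t$ is needed, since \reff{PP} contains $\d_\eta p_{jk}$. Finally, $\F(f,u)\in C^2$ uses $f\in C^1\cap C^2_t$ and, through \reff{mm}, $m_{jk}\in C^2_x$ with continuous $\d_tm_{jk}$.

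The main obstacle is the bookkeeping in Part~2. The characteristic derivative $\d_t+a_j\d_x$ must be combined with $\d_t$ to recover all second derivatives, including $\d_x^2$. Since $a_j$ is only $C^1$ in $x$, I must verify that $\d_x^2$ is obtained without differentiating $a_j$ twice in $x$. I expect to handle this by writing $\d_x=a_j^{-1}\big((\d_t+a_j\d_x)-\d_t\big)$ and noting that $(\d_t+a_j\d_x)w=-b_{jj}w+(\text{known})$ for the relevant terms $w$. Then $\d_x\d_x w$ only needs $\d_xb_{jj}$ and $\d_xa_j$, which are available, together with $\d_x\d_tw$, which is controlled by the already-established $C^1$ property of $\d_tw$.
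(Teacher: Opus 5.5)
Your proposal is correct and follows the paper's own route. The paper proves Theorem~\ref{regC1} only by remarking that the minimal hypotheses can be read off from the proof of Theorem~\ref{thm:lin}, that is, from Lemma~\ref{lem:smoothing} with $m=1$, $k=2$ applied through Claims~\ref{I-C}--\ref{7}; your bookkeeping, including recovering $\partial_x^2$ from $\partial_t$ and the directional identities so that no second $x$-derivative of $a_j$ is needed, is exactly that reading.

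Two small corrections. First, for \reff{dtG1} you need $c_j^1(x_j,1-x_j,\cdot)$ only in $C^1$, which uses $\partial_t^2a_j$ and $\partial_tb_{jj}$; requiring it in $C^2$ would need $\partial_t^3a_j$. Second, in \reff{mm} keep the factors together as $a_j(\widetilde\om_j(\tau),\tau)\,d_j(\widetilde\om_j(\tau),x,t)=c_j(\widetilde\om_j(\tau),x,t)$, because differentiating $d_j(\widetilde\om_j(\tau),x,t)$ twice on its own would bring in $\partial_x^2a_j$.
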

\end{rem}

\subsection{Equivalence of the continuous and the distributional solution concepts}

In the next Subsection \ref{2} we prove the semilinear Theorem \ref{thm:semilin}.
To this end,  we here  establish an equivalence result
between the continuous and distributional formulations of the linear problem
\reff{lin}--\reff{per} (Lemma~\ref{equiv}), using an argument that extends to a broader class of hyperbolic problems.
The proof is based on the following auxiliary lemma.

\begin{lem}\label{const}
	For every $j\le n$, let $	a_j \in C^2(\Pi) $
	satisfy
	$$
	0<c_0 \le |a_j(x,t)| \le c_1
	\qquad \mbox{for all }\  (x,t)\in \Pi
	$$
	where $c_0,c_1>0$ are constants.
	Let $U_j\in C(\Pi)$ satisfy
	\beq\label{transport}
	\partial_t U_j + a_j(x,t)\partial_x U_j =0
	\qquad \mbox{in } \ \mathcal D'\left(\Pi^\circ\right),
	\ee
where  $\Pi^\circ$	denotes the interior of $\Pi$. Then
	\beq\label{traces}
	U_j(x,t)
	=
	U_j\bigl(x_j,\omega_j(x_j,x,t)\bigr)
	\qquad \mbox{for all }\  (x,t)\in\Pi.
	\ee
\end{lem}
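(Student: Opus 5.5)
Throughout, fix $j$ and straighten the characteristics of $a_j$ by a change of variables. By \reff{char} and the standing assumption $|a_j|\ge c_0$, the map
$$
\Phi_j:(x,t)\mapsto(x,s),\qquad s=\omega_j(x_j,x,t),
$$
is a $C^2$-diffeomorphism of $\Pi$ onto itself. For fixed $x$, the map $t\mapsto\omega_j(x_j,x,t)$ is strictly increasing, since by \reff{dx} $\partial_t\omega_j>0$. It is also onto $\R$, because $\omega_j(x_j,x,t+2\pi)=\omega_j(x_j,x,t)+2\pi$ by the periodicity of $a_j$. The regularity $a_j\in C^2$ gives $\omega_j\in C^2$ in $(x,t)$, and \reff{dx} shows that its Jacobian with respect to $t$ never vanishes. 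The third identity in \reff{dx} says $(\partial_t+a_j\partial_x)\omega_j(x_j,x,t)=0$. Hence, in the new coordinates the transport operator becomes
$$
\partial_t+a_j\partial_x=a_j\,\partial_x\big|_{s\ \mathrm{fixed}}.
$$

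Set $V_j(x,s)=U_j(\Phi_j^{-1}(x,s))$, so that $V_j\in C(\Pi)$. Transport \reff{transport} through $\Phi_j$ by testing against $\varphi\circ\Phi_j$ for $\varphi\in C_c^\infty(\Pi^\circ)$; this test function is only $C^2$, which is enough by density since $U_j$ is continuous. The Jacobian $J=\det D\Phi_j=\partial_t\omega_j$ is positive. The distributional equation then becomes
$$
\int V_j\,\partial_x\bigl(\tilde a_j\,\tilde J^{-1}\,\varphi\bigr)\,dx\,ds=0\qquad\mbox{for all test functions }\varphi,
$$
where the tildes denote the coefficients transported by $\Phi_j$. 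Making this precise needs one identity: the divergence-form adjoint $-\partial_t\psi-\partial_x(a_j\psi)$ must transform into $-\tilde J^{-1}\partial_x(\tilde a_j\tilde J^{-1}\cdot)$ up to harmless factors. This should follow from the Liouville-type identity $\partial_x(a_j J)+\partial_t J=0$, which in turn follows from differentiating \reff{dx} along characteristics. I expect this bookkeeping to be the main technical step, and the $C^2$ regularity of $a_j$ is used exactly here.

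The weight $\tilde a_j\tilde J^{-1}$ is $C^1$ and nonvanishing. Therefore the functions $\psi=\tilde a_j\tilde J^{-1}\varphi$ range over all of $C_c^1(\Pi^\circ)$, and we obtain $\partial_x V_j=0$ in $\mathcal D'(\Pi^\circ)$. A continuous function whose distributional $x$-derivative vanishes is constant in $x$ on each horizontal segment $\{(x,s):0<x<1\}$. One can see this by mollifying in $s$ and $x$, or by testing with $\varphi(x,s)=\alpha(x)\beta(s)$ where $\int\alpha'=0$. So $V_j(x,s)=g(s)$ for $0<x<1$, and by continuity up to the boundary, $V_j(x,s)=V_j(x_j,s)$ on all of $\Pi$.

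Returning to the original variables gives $U_j(x,t)=V_j(x,\omega_j(x_j,x,t))=V_j(x_j,\omega_j(x_j,x,t))$. Since $\omega_j(x_j,x_j,\tau)=\tau$, we have $V_j(x_j,\tau)=U_j(x_j,\tau)$, which is exactly \reff{traces}.
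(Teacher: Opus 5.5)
Your argument is correct in substance and takes a different route from the paper. The paper never straightens characteristics. It fixes the single curve $t=\omega_j^0(x)=\omega_j(x,x_0,t_0)$, tests \reff{transport} with $\phi_\varepsilon=\eta(x)\,(\Theta_j/a_j)(x,t)\,\rho_\varepsilon\bigl(t-\omega_j^0(x)\bigr)$, which concentrates on that curve, and lets $\varepsilon\to0$. Because the mollifier is centred in $t$ rather than in the characteristic variable, a mean-value remainder involving $\rho_\varepsilon'$ survives in the limit and has to be computed. The outcome is $\int_0^1 v\,\frac{d}{dx}\bigl(\eta\,\Theta_j(x,\omega_j^0(x))\bigr)\,dx=0$ for $v(x)=U_j(x,\omega_j^0(x))$, so $v$ is constant.

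Your Jacobian $J=\partial_t\omega_j(x_j,x,t)$ is exactly the paper's $\Theta_j$, by \reff{dx}. Your test function in original variables, $\psi=(J/a_j)\,(\chi\circ\Phi_j)$, is the straightened counterpart of $\phi_\varepsilon$. For it the remainder vanishes identically, and you get $\partial_xV_j=0$ on all of $\Pi^\circ$ at once. Your version is cleaner and global, with no limit computation. The paper's works locally along one curve and avoids both the global diffeomorphism and the change-of-variables rule for distributions. In both proofs, $a_j\in C^2$ is used precisely to make the weight $\Theta_j/a_j$ of class $C^1$.

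Two things need correcting.

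First, the Liouville identity you quote, $\partial_x(a_jJ)+\partial_tJ=0$, is false. From \reff{dx} one gets $(\partial_t+a_j\partial_x)J=(\partial_ta_j/a_j)\,J$. Hence $\partial_tJ+\partial_x(a_jJ)=J\,(\partial_ta_j/a_j+\partial_xa_j)$, which is nonzero already for $a_j=a(t)$ with $a'\ne0$. The right identity is $\partial_t(J/a_j)+\partial_xJ=0$, which is the one the paper derives for $\Theta_j$. With it one gets
\[
\partial_t\psi+\partial_x(a_j\psi)=J\,\partial_x\big|_{s}\bigl(J^{-1}a_j\psi\bigr),
\]
where the prefactor is $J$ (not $\tilde J^{-1}$) and cancels against $dx\,ds=J\,dx\,dt$. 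So your displayed transformed equation is correct, but it does not follow from the identity you named.

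Second, the lemma does not assume $2\pi$-periodicity of $a_j$, so surjectivity of $t\mapsto\omega_j(x_j,x,t)$ should not rely on it. Argue instead from $|\partial_\xi\omega_j|\le 1/c_0$, which gives $|\omega_j(x_j,x,t)-t|\le 1/c_0$. Alternatively, exhibit the inverse $(x,s)\mapsto(x,\omega_j(x,x_j,s))$ directly.
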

\begin{proof}
	Fix $j\le n$ and $(x_0,t_0)\in\Pi$ and write $\omega_j^0(x)=\omega_j(x,x_0,t_0)$.
	Let $\eta \in C_c^\infty((0,1))$ and $\rho\in C_c^\infty((-1,1))$, and
	let $\rho_\varepsilon=\frac{1}{\eps}\rho\left(\frac{x}{\eps}\right)$ be a standard delta-mollifier.
	Define
	\beq\label{theta}
	\Theta_j(x,t)
	=
	\exp\!
	\int^x_{x_j}
	\left(\frac{\partial_t a_j}{a_j^2}\right)(\xi,\omega_j(\xi,x,t))\dd \xi
	\ee
	and
	\beq\label{phi_eps}
	\phi_\varepsilon(x,t)
	=\eta(x)\,\frac{\Theta_j(x,t)}{a_j(x,t)}
	\,\rho_\varepsilon\bigl(t-\omega_j^0(x)\bigr).
	\ee
	Note that $\phi_\varepsilon$ is
	supported in the $\eps$-neighborhood of the characteristic curve $t=\omega_j^0(x)$.	
	Since  $	a_j \in C^2(\Pi) $,
	it follows that $(\phi_\varepsilon)\subset C^1_c(\Pi)$. Moreover, since 
	$U_j$ is continuous, then the equality \reff{transport} implies that
	\beq\label{distr_eps}
	\int_\Pi U_j\bigl(\partial_t\phi_\varepsilon + \partial_x(a_j\phi_\varepsilon)\bigr)\dd x\dd t=0
	\ee
	for all $\eps>0$. 
	
	By direct calculations, from the formula \reff{theta} we obtain that
	$$
	\frac{1}{a_j}
	\partial_t\Theta_j+\d_x\Theta_j-
	\frac{\partial_t a_j}{a_j^2}\,
	\Theta_j=0
	$$
	and, hence, from the formula \reff{phi_eps} combined with the mean value theorem, we derive  that
	\begin{eqnarray*}
		\partial_t\phi_\varepsilon + \partial_x(a_j\phi_\varepsilon)
		&=&\eta'(x)\Theta_j(x,t)\rho_\varepsilon(t-\omega_j^0(x))\\
		&&	+\eta(x)\frac{\Theta_j(x,t)}{a_j(x,t)a_j(x,\omega_j^{0}(x))}
		\,\rho'_\varepsilon(t-\omega_j^0(x))
		\left[a_j(x,\omega_j^{0}(x)) - a_j(x,t)\right] \\
		&=&\eta'(x)\Theta_j(x,t)\rho_\varepsilon(t-\omega_j^0(x))
		\\ &&-\eta(x)\frac{\Theta_j(x,t)\,(t-\omega_j^0(x))}{a_j(x,t)a_j(x,\omega_j^{0}(x))}
		\rho'_\varepsilon(t-\omega_j^0(x))
		\int_{0}^{1}\d_ta_j(x,\omega_j^0(x)+\sigma(t-\omega_j^0(x)))\dd\sigma,
	\end{eqnarray*}
	where
	$$
	\rho'_\varepsilon(t-\omega_j^0(x))=\frac{1}{\eps^2}\rho'\left(\frac{t-\omega_j^0(x)}{\eps}\right).
	$$

	Set
	$$
	F_j(x,t)=\eta(x)\frac{\Theta_j(x,t)}{a_j(x,t)\,a_j(x,\omega_j^{0}(x))}
	\int_{0}^{1}\d_ta_j(x,\omega_j^0(x)+\sigma(t-\omega_j^0(x)))\dd\sigma
	$$
	and rewrite the  integral in \reff{distr_eps} as follows:
	$$\begin{array}{rcl}
		\displaystyle\int_\Pi U_j\bigl(\partial_t\phi_\varepsilon + \partial_x(a_j\phi_\varepsilon)\bigr)\dd x\dd t&=&\displaystyle
		\int_\Pi U_j(x,t)\eta'(x)\Theta_j(x,t)\,\rho_\varepsilon(t-\omega_j^0(x))\dd x\dd t\\ &&\displaystyle
		-\int_\Pi U_j(x,t)\,F_j(x,t)\,\rho_\varepsilon(t-\omega_j^0(x))\dd x\dd t.
	\end{array}$$
	The last integral, after changing the variable $t$ to 
	$s=t-\omega_j(x)$, takes  the form
	$$\begin{array}{ll}
		\displaystyle\int_\Pi U_j(x,\omega_j^0(x)+\eps s)\,F_j(x,\omega_j^0(x)+\eps s)\,s\,\rho'(s)\dd x\dd s\\ [3mm]=\displaystyle
		\int_\Pi U_j(x,\omega_j^0(x)+\eps s)\,F_j(x,\omega_j^0(x)+\eps s)\,(s\,\rho(s))^\prime\dd x\dd s\\ [3mm]
		-\displaystyle
		\int_\Pi U_j(x,\omega_j^0(x)+\eps s)\,F_j(x,\omega_j^0(x)+\eps s)\,\rho(s)\dd x\dd s.
	\end{array}
	$$
	Passing to the limit $\varepsilon\to 0$, the first summand on the right-hand side vanishes, and we get 
	$$
	\int_\Pi U_j(x,\omega_j^0(x))\,F_j(x,\omega_j^0(x))\,s\,\rho'(s)\dd x\dd s=
	-
	\int_0^1 U_j(x,\omega_j^0(x))\,F_j(x,\omega_j^0(x))\dd x.
	$$
	Now, on the account of the formula \reff{distr_eps}, we obtain
	$$
	\begin{array}{rcl}
		\displaystyle	0&=&	\displaystyle\lim_{\eps\to 0}\int_\Pi U_j\bigl(\partial_t\phi_\varepsilon + \partial_x(a_j\phi_\varepsilon)\bigr)\dd x\dd t
		\displaystyle=\int_0^1 U_j(x,\omega_j^0(x))
		\eta'(x)\Theta_j(x,\omega_j^0(x))\dd x\\\displaystyle
		&&	+	\displaystyle
		\int_0^1 U_j(x,\omega_j^0(x))\left(\frac{\d_ta_j}{a_j^2}\right)(x,\omega_j^0(x))
		\eta(x)\,\Theta_j(x,\omega_j^0(x))\dd x\\
		\displaystyle&=&	\displaystyle
		\int_0^1U_j(x,\omega_j^0(x))\frac{\dd}{\dd x}\left(\eta(x)\,\Theta_j(x,\omega_j^0(x))\right)\dd x.
	\end{array}
	$$
	Finally, since $\Theta(x,\omega_j^0(x))\ne 0$ and $U_j$ is continuous, then denoting
	$
	v(x)=U_j(x,\omega_j^0(x)),
	$
	we conclude that
	$$
	\int_0^1v(x)\psi'(x)\dd x=0
	$$
	for any $\psi\in C_c^1((0,1))$. This means that $v$ is constant, which implies 
	$v(x)=v(x_j)$. Therefore,
	$$
	U_j(x,\om_j^0(x))=U_j(x_j,\om_j^0(x_j))=U_j(x_j,\om_j(x_j,x_0,t_0)).
	$$
	Since $(x_0,t_0)\in\Pi$ is arbitrary, the desired equality \reff{traces} now easily follows.
\end{proof}

\begin{rem} \rm 
	The proof of Lemma~\ref{const} is based on direct calculations and describes
	the reduction of distributional solutions of \reff{transport}, viewed as
	functions of two variables, to the corresponding one-variable functions along
	characteristic curves. Alternatively, the lemma could be derived from standard
	results on first-order linear partial differential equations
	\cite[Section~6.1]{Horm}, which imply that distributional solutions of
	\reff{transport} are constant along the integral curves of the {\it smooth} vector
	field $\partial_t+a_j(x,t)\partial_x$.
\end{rem}

\begin{lem}\label{equiv}
	A continuous function $u\in C_{\mathrm{per}}(\Pi;\mathbb R^n)$ is
	a continuous solution
	to the problem  \reff{lin}--\reff{per} (see Definition \ref{cont}) if and only if
	$u$ satisfies the differential system \reff{lin} in a distributional sense and the conditions \reff{bc_lin} and \reff{per} pointwise.
\end{lem}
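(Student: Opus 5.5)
We prove the two implications separately. The integral equation \reff{oper} is the componentwise identity
$$
u_j(x,t)=c_j(x_j,x,t)\bigl([Ru]_j+[Pu]_j+h_j\bigr)(\omega_j(x_j,x,t))
-\int_{x_j}^{x}d_j(\xi,x,t)\Bigl(\sum_{k\ne j}b_{jk}u_k+Mu-f\Bigr)_j(\xi,\omega_j(\xi,x,t))\dd\xi .
$$
The basic device is the multiplicative weight $c_j(x_j,x,t)^{-1}$, which removes the diagonal term $b_{jj}u_j$.

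\emph{From \reff{oper} to the distributional formulation.} Let $u$ be a continuous solution. Setting $x=x_j$ in \reff{oper} gives $c_j(x_j,x_j,t)=1$, the integral vanishes and $\omega_j(x_j,x_j,t)=t$. Hence the boundary conditions \reff{bc_lin} hold pointwise, and periodicity is part of the definition. To obtain \reff{lin} in $\mathcal D'(\Pi^\circ)$, we write $u_j=c_j(x_j,x,t)\,\bigl[\psi_j(\omega_j(x_j,x,t))-\int_{x_j}^x c_j(\xi,x,t)^{-1}\ldots\bigr]$ and verify the identity first for smooth data by differentiating along characteristics, using \reff{dx}: the operator $\partial_t+a_j\partial_x$ annihilates $\omega_j(x_j,x,t)$ and acts on $c_j$ as multiplication by $-b_{jj}$, while the integral term produces the right-hand side $g_j:=f_j-\sum_{k\ne j}b_{jk}u_k-[Mu]_j$. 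For merely continuous $u$, we approximate $u$ (inside $\psi_j$ and $g_j$) uniformly by $C^1$ functions $u^l$. We then pair with a test function $\phi$, moving the derivatives onto $\phi$ via $\int U(\partial_t\phi+\partial_x(a_j\phi))$, and pass to the limit. This only requires the distributional identity to be stable under uniform convergence, which holds because all expressions are continuous in $u$.

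\emph{From the distributional formulation to \reff{oper}.} Conversely, let $u$ be continuous, satisfy \reff{lin} in $\mathcal D'$, and satisfy \reff{bc_lin}, \reff{per} pointwise. Define $w_j$ as the right-hand side of \reff{oper} with $\psi_j=[Ru]_j+[Pu]_j+h_j$ and $g_j$ computed from the given $u$; both $\psi_j$ and $g_j$ are known continuous functions. By the first part, $w_j$ solves $\partial_tw_j+a_j\partial_xw_j+b_{jj}w_j=g_j$ distributionally, with $w_j(x_j,t)=\psi_j(t)=u_j(x_j,t)$. The difference $e_j=u_j-w_j$ is therefore continuous and satisfies $\partial_te_j+a_j\partial_xe_j+b_{jj}e_j=0$ in $\mathcal D'$. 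Set $U_j=e_j/c_j(x_j,x,t)$. Since $c_j$ is $C^1$ and nonvanishing, and $(\partial_t+a_j\partial_x)c_j^{-1}=b_{jj}c_j^{-1}$, the Leibniz rule for products of a $C^1$ function and a distribution gives $\partial_tU_j+a_j\partial_xU_j=0$ in $\mathcal D'(\Pi^\circ)$. Lemma~\ref{const} then yields $U_j(x,t)=U_j(x_j,\omega_j(x_j,x,t))=0$, because $e_j(x_j,\cdot)=0$. Hence $u=w$, which is exactly \reff{oper}.

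\emph{Main obstacle.} The technical point is the regularity of the data. Lemma~\ref{const} is stated for $a_j\in C^2$, but Theorem~\ref{thm:lin} only assumes $a_j\in C^l$ with $l\ge1$, so in the case $l=1$ one needs either to assume $C^2$ here or to rerun the argument of Lemma~\ref{const}, where $\phi_\eps\in C^1_c$ requires $\Theta_j/a_j\in C^1$.

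A further caveat is that the Leibniz step requires $b_{jj}$ (hence $c_j$) to be only continuous in $x$ but $C^1$ along the characteristic. I would handle this by choosing test functions of the form $\phi/c_j$, which is legitimate because the directional derivative of $c_j$ exists and is continuous. Apart from this, the approximation argument in the first part is routine.
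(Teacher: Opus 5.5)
Your proposal is correct and follows essentially the paper's proof. Necessity is obtained by approximating $u$ by $C^1$ functions inside the operators of \reff{oper} and using the characteristic identities \reff{dx}; sufficiency is obtained by weighting with $c_j(x_j,x,t)^{-1}$ to remove $b_{jj}$, applying Lemma~\ref{const}, and invoking \reff{bc_lin}. The only difference is cosmetic: you subtract the whole right-hand side of \reff{oper}, so your $U_j$ vanishes identically, whereas the paper subtracts only $[Bu]_j+[\F(f,u)]_j$ and recovers the boundary term from the trace of $U_j$ at $x=x_j$. Your $C^2$ caveat is harmless, since the lemma is only used in Subsection~\ref{2}, where $a_j\in C^l$ with $l\ge 2$.
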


\begin{proof}
	{\it Necessity.} Let $u$ be a continuous solution
	to the problem  \reff{lin}--\reff{per}. By Definition~\ref{cont},
	$u$ satisfies the equation \reff{oper} pointwise. Using
	the notations introduced in \reff{cd} and \reff{CDF}, it follows directly that the conditions \reff{bc_lin} and \reff{per} are satisfied pointwise as well.
	It remains to verify that $u$ satisfies (\ref{lin}) in the distributional sense. To this end, 
	let $(u^l)\subset C_{per}^{1}\left(\Pi;\R^n\right)$ be   any sequence converging to
	$u$ in $C_{per}\left(\Pi;\R^n\right)$.  Then, for any   $j\le n$ and any compactly supported function
	$\phi\in C_c^\infty((0,1)\times\R)$,
	it holds
	\begin{eqnarray*}
		& & \left\langle (\d_t+a_j\d_x)u_j,\phi\right\rangle = \left\langle u_j,-\d_t\phi-\d_x(a_j\phi)\right\rangle
		= \lim_{l\to\infty}\left\langle u_j^l,-\d_t\phi-\d_x(a_j\phi)\right\rangle\\
		&&\qquad =
		\lim_{l\to\infty}\left\langle 
		[\CC u^l]_j+[Qu^l]_j+[Bu^l]_j+[\H h]_j+[\F(f,u^l)]_j,-\d_t\phi-\d_x(a_j\phi)\right\rangle	\\
		&&\qquad =
		\lim_{l\to\infty}\left\langle -b_{jj}\left[
		[\CC u^l]_j+[Qu^l]_j+[Bu^l]_j+[\H h]_j+[\F(f,u^l)]_j\right],\phi\right\rangle\\
		&& \qquad\ \ \ +
		\lim_{l\to\infty}\left\langle 
		-\sum_{k\not=j}b_{jk}u_k^l+f_j-Mu^l,\phi\right\rangle
		=\left\langle 
		-\sum_{k=1}^nb_{jk}u_k+f_j-Mu,\phi\right\rangle,
	\end{eqnarray*}
	as desired.
	Here we used \reff{oper} twice and  the identity
	\beq\label{2k}
	(\d_t+a_j(x,t)\d_x)\psi(\om_j(\xi,x,t))=0,
	\ee
	which holds  for all $j\le n$, $\xi,x\in[0,1]$, $t\in\R$, and  $\psi\in C^1(\R)$.

	{\it Sufficiency.} Assume that a continuous function $u$
	satisfies the equation \reff{lin} in a distributional sense and the conditions \reff{bc_lin} and \reff{per} pointwise. 
	Fix any and $j\le n$. We first show that the function 
	\begin{equation}\label{11k}
		U_j(x,t)=c_j^{-1}(x_j,x,t)	\Bigl[u_j(x,t)-[Bu]_j(x,t)-\left[\F(f,u)\right]_j(x,t)\Bigr]
	\end{equation}	
	is a constant along the characteristic curve $\om_j(\xi,x,t)$ passing through $(x,t)$. To this end, we prove that the distributional directional derivative of $U_j$ in the direction of the vector field $(1,a_j(x,t))$ vanishes. The key point is that, since $u$ is continuous and satisfies \reff{lin} in the distributional sense, the directional derivative  $(\partial_t  + a_j(x,t)\partial_x)u_j$ is represented by the continuous function
	$$
	f_j-\sum_{k=1}^nb_{jk}u_k-[Mu]_j.
	$$
	Consequently, the directional derivative $(\partial_t  + a_j(x,t)\partial_x)u_j$ is well-defined in the sense of distributions. Let
	$\phi\in C_c^\infty\left((0,1)\times\R\right)$ be arbitrary
	and let $(u^l)\subset C_{per}^{1}\left(\Pi;\R^n\right)$ be   any sequence  converging to
	$u$ in $C_{per}\left(\Pi;\R^n\right)$. 
	Then, testing $(\d_t+a_j\d_x)U_j$ against $\phi$ and using \reff{11k} and \reff{oper}, we obtain
	\begin{eqnarray*}
		& & \left\langle (\d_t+a_j\d_x)U_j,\phi\right\rangle=\left\langle U_j,-\d_t\phi-\d_x(a_j\phi)\right\rangle
		\\
		&&\qquad =\lim_{l\to\infty}\left\langle c_j^{-1}(x_j,x,t)	\left(u_j^l-[Bu^l]_j-[\F(f,u^l)]_j\right),-\d_t\phi-\d_x(a_j\phi)\right\rangle\\
		&&\qquad =\lim_{l\to\infty}\left\langle \left[	u_j^l-[Bu^l]_j-[\F(f,u^l)]_j\right](\d_t+a_j\d_x)c_j^{-1}(x_j,x,t),\phi\right\rangle\\ && \qquad\ \ \ \ +
		\lim_{l\to\infty}\left\langle c_j^{-1}(x_j,x,t)(\d_t+a_j\d_x)\left[	u_j^l-[Bu^l]_j-[\F(f,u^l)]_j\right],\phi\right\rangle\\
		&& \qquad=\left\langle \left[	u_j-[Bu]_j-\left[\F(f,u)\right]_j\right]b_{jj}c_j^{-1}(x_j,x,t),\phi\right\rangle\\
		&& \qquad\ \ \ \ +\left\langle b_{jj}c_j^{-1}(x_j,x,t)\left[	[Bu]_j+\left[\F(f,u)\right]_j\right],\phi\right\rangle\\
		&&\qquad\ \ \ \ -\biggl\langle c_j^{-1}(x_j,x,t)\biggl[f_j-\sum_{k\ne j}b_{jk}u_k-[Mu]_j\biggr],\phi\biggr\rangle\\
		&&\qquad\ \ \ \ +\biggl\langle c_j^{-1}(x_j,x,t)\biggl[f_j-\sum_{k=1}^nb_{jk}u_k-[Mu]_j\biggr],\phi\biggr\rangle=0.
	\end{eqnarray*}
	Hence, $U_j$ satisfies the homogeneous transport equation \reff{transport}. Applying now Lemma \ref{const} finishes 
	the proof.
\end{proof}

\subsection{Semilinear hyperbolic systems: proof of  Theorem \ref{thm:semilin}}
\label{2}

Our strategy for proving the semilinear Theorem~\ref{thm:semilin}
is based on reducing the semilinear problem \reff{semilin}, \reff{bc}, \reff{per}
to a linear problem of the form \reff{lin}--\reff{per}, to which Theorem~\ref{thm:lin} can then be applied.

Let $u$ be a classical solution to 
the problem (\ref{semilin}),  (\ref{bc}), \reff{per}, and assume that all conditions of Theorem \ref{thm:semilin} are fulfilled for some $l\ge 2$.
Then  the right-hand side of the system \reff{semilin}, and hence the left-hand side, is continuously differentiable in 
$x$ and $t$. Differentiating the system \reff{semilin} with respect to $t$ in the distributional sense
(with the derivatives $\partial_{xt}^2u$ and $\partial_t^2u$ interpreted separately  in the distributional sense), and differentiating the boundary conditions \reff{bc} pointwise with respect to $t$, we set $v=u_t$. Substituting $\partial_xu_j$ from \reff{semilin}, we obtain the following system for $v$  in the distributional sense:
\beq\label{semilin_t}
\begin{array}{rr}
\displaystyle\partial_tv_j+a_j(x,t)\partial_xv_j-\left[\frac{\d_ta_j}{a_j}+\d_{u_j}f_j\right]v_j-\sum_{j\not=k}\d_{u_k}f_j\,v_k-
\d_{n+3}f_j\left[Mv\right]_j(x,t)\\\displaystyle
=-\frac{\d_ta_j}{a_j}f_j\left(x,t,u,\left[Mu\right]_j\right)+\d_tf_j+\d_{n+3}f_j\left[M^\prime u\right]_j(x,t),\quad  j\le n,
	\end{array}
\ee
 and the following boundary and periodicity conditions pointwise:
\beq\label{bc_t}
\begin{array}{ll}
	v_{j}(0,t)= [Rv]_j(t)+[\widetilde Pv]_j(t)+\tilde h_j(t), \quad 1\le j\le m,\\ [1mm]
	v_{j}(1,t)= [Rv]_j(t)+[\widetilde Pv]_j(t)+\tilde h_j(t), \quad m< j\le n,
\end{array}
\ee
and
\beq\label{per_t}
v_j(x,t)=v_j(x,t+2\pi),\quad  j\le n,
\ee
Here the operator $R$ is  defined by \reff{eq:R}, $r_{jk}$ are given by \reff{r_t}, and
 the following notation is used:
\beq\label{not1}
\begin{array}{rcl}
\left[M^\prime u\right]_j(x,t)&=&\displaystyle\sum_{k=1}^{n}\int_{0}^{x}\d_tm_{jk}(\xi,x,t)\,v_k(\xi)\dd\xi,
\\ [3mm]
[\widetilde Pv]_j(t)
&=&\displaystyle\sum_{k=1}^n\int_0^1\widetilde p_{jk}(x,t)\,v_k(x,t)\dd x,\\ [3mm]
\widetilde p_{jk}(x,t) &=&p_{jk}(x,t) \,\d_{n+2}	h_j\Bigl(t, u_{m+1}(0,t),\dots,u_n(0,t), u_1(1,t),\dots,u_m(1,t),\left[Pu\right]_j(t)\Bigr),\\ [3mm]
\widetilde h_j(t)&=&\d_1	h_j\Bigl(t, u_{m+1}(0,t),\dots,u_n(0,t), u_1(1,t),\dots,u_m(1,t),\left[Pu\right]_j(t)\Bigr)\\ [3mm]
&&+\d_{n+2}	h_j\Bigl(t, u_{m+1}(0,t),\dots,u_n(0,t), u_1(1,t),\dots,u_m(1,t),\left[Pu\right]_j(t)\Bigr)\\ [3mm]
&&\displaystyle\times\sum_{k=1}^n\int_0^1\d_t p_{jk}(x,t)\,u_k(x,t)\dd x.
\end{array}
\ee
Rewrite \reff{semilin_t} in the form
\beq\label{semilin_tt}
\partial_tv_j+a_j(x,t)\partial_xv_j+\sum_{k=1}^{n}b_{jk}(x,t)v_k+\d_{n+3}f_j\left(x,t,u,\left[Mu\right]_j\right)\left[Mv\right]_j(x,t)=\widetilde f_j(x,t),\quad j\le n, 
\ee
where
\beq\label{not2}
\begin{array}{rcl}
\displaystyle b_{jj}(x,t)&=&\displaystyle-\left[\frac{A_{jt}}{A_j}+F_{jj}\right],\quad j\le n,\\ [5mm]
b_{jk}(x,t)&=&- F_{jk},\quad j\ne k,\\ [3mm]
\displaystyle\widetilde f_j(x,t)&=&\displaystyle F_{jk}-\frac{A_{jt}}{A_j}F_j
\displaystyle -\d_{n+3}f_j\left[M^\prime u\right]_j(x,t),\quad j\le n,	
\end{array}
\ee
and the functions $A_j$, $A_{jt}$, $F_j$, and $F_{jk}$ 
are defined by \reff{notation_u}.
Since $u$ is fixed, we suppress its dependence in the notation introduced in
\reff{not1} and \reff{not2}.
Note that  the problem \reff{semilin_tt}, \reff{bc_t}, \reff{per_t}, when regarded as a system for  $v$, is linear and is of the form \reff{lin}, \reff{bc}, \reff{per}.
By our construction, the  function $v$ is continuous and is a distributional  solution to
the problem \reff{semilin_tt}, \reff{bc_t}, \reff{per_t}.
Our strategy is, therefore, to show that any continuous function~$v$, which is a distributional solution to \reff{semilin_tt}, \reff{bc_t}, \reff{per_t} (in particular, $v=u_t$) reaches the $C^{l-1}$-regularity,
 since then \reff{semilin} differentiated $l-1$ times in $x$ implies that
$u\in C^l(\Pi,\R^n)$.
 Because of Lemma \ref{equiv}, this will be done if we show that any continuous solution~$v$ 
to  \reff{semilin_tt}, \reff{bc_t}, \reff{per_t} is $C^{l-1}$-regular. 

Let  $v$ be a continuous solution to  \reff{semilin_tt}, \reff{bc_t}, \reff{per_t}. 
The proof proceeds in \(l-1\) steps, each consisting of an application
of the linear Theorem~\ref{thm:lin}.

In the first step, we prove that $v\in C^1(\Pi,\R^n)$ by
 applying Theorem \ref{thm:lin} with $l=1$ to the system \reff{semilin_tt}, \reff{bc_t}, \reff{per_t}. This is possible because, by the regularity assumptions on the
 coefficients of \reff{semilin} and \reff{bc}, together with the fact
 that $u\in C^1(\Pi,\R^n)$,  all coefficients of
 \reff{semilin_tt} and \reff{bc_t} satisfy the regularity assumptions
 required in Theorem~\ref{thm:lin} for $l=1$. Moreover, 
the nonresonance condition \reff{nonrez_semilin} with  $i=2$ is
precisely the nonresonance condition \reff{nonrez} with  $i=1$, where $b_{jj}$ is given by \reff{not2}.
Hence, by Theorem~\ref{thm:lin}, any continuous 
solution $v$ (and, in particular  $v=u_t$) belongs to $C^1(\Pi,\R^n)$. As mentioned above, substituting $v=u_t$ into \reff{semilin} and 
differentiating the resulting system with respect to $x$ shows that $u\in C^2(\Pi,\R^n)$.

In the second step, we prove that $v\in C^2(\Pi,\R^n)$. Now,
since $u\in C^2(\Pi,\R^n)$, the
coefficients of \reff{semilin_tt} and \reff{bc_t} satisfy the regularity assumptions of Theorem \ref{thm:lin} with $l=2$.
 Moreover,  the nonresonance 
condition \reff{nonrez_semilin} with $i=3$  is precisely
 the  condition \reff{nonrez} with $i=2$, where $b_{jj}$ is given by \reff{not2}. Hence, by Theorem \ref{thm:lin}, the continuous 
 solution~$v$ to \reff{semilin_tt}, \reff{bc_t}, \reff{per_t} belongs to $C^2(\Pi,\R^n)$. As in the first step, substituting $v=u_t$ into
 \reff{semilin} and differentiating the resulting system twice with
 respect to $x$, we conclude that $u$ belongs to $C^3(\Pi,\R^n)$.

 Proceeding inductively in this way, after $l-1$ steps we obtain
$v\in C^{l-1}(\Pi,\R^n)$, and hence $u\in C^l(\Pi,\R^n)$, as desired.  
Indeed, in the last step, we assume that  $v\in C^{l-2}(\Pi,\R^n)$ and $u\in C^{l-1}(\Pi,\R^n)$ and prove that $v\in C^{l-1}(\Pi,\R^n)$. The regularity of $u$ implies that
all coefficients of \reff{semilin_tt} and \reff{bc_t} satisfy the regularity assumptions of Theorem~\ref{thm:lin} with $l-1$ in place of $l$. Moreover, as noted above, the nonresonance
conditions for the problem \reff{semilin_tt} are precisely those given
by \reff{nonrez_semilin}. Hence, by Theorem~\ref{thm:lin}, every
continuous solution $v$ to \reff{semilin_tt}, \reff{bc_t}, \reff{per_t}, and, in particular, $v=u_t$ belongs to $C^{l-1}$. 
Finally, substituting $v=u_t$ into
\reff{semilin} and differentiating the resulting system
 $l-1$ times with respect to $x$, we conclude that 
$u\in C^{l}(\Pi,\R^n)$, which completes the proof.

\subsection{Quasilinear hyperbolic systems: proof of  Theorem \ref{thm:quasilin}}\label{3} 

Similarly to the semilinear case, the idea here is to reduce the quasilinear problem to a semilinear one and then  apply the semilinear Theorem \ref{thm:semilin}. 

Let $u$ be a classical $C^2$-solution to 
the problem (\ref{quasilin}),  (\ref{bc}), \reff{per}, and assume that all conditions of Theorem \ref{thm:quasilin} are satisfied for some $l\ge 3$. As in the semilinear case, we differentiate (\ref{quasilin}),  (\ref{bc}), and \reff{per} with respect to $t$, but now all  differentiations are performed  pointwise.
Introducing the notation $v=u_t$, the differentiated system (\ref{quasilin}) takes the form
$$
\begin{array}{rr}
	\displaystyle\partial_tv_j+a_j(x,t,u)\partial_xv_j+\biggl[\d_2a_j+
\sum_{k=1}^n\d_{u_k}a_j\,v_k\biggr]\d_xu_j-\d_{n+3}f_j\left[Mv\right]_j(x,t)\\\displaystyle%
	=\d_tf_j\left(x,t,u,\left[Mu\right]_j\right)+\sum_{k=1}^n\d_{u_k}f_j\,v_k+\d_{n+3}f_j\left[M^\prime u\right]_j(x,t),\quad   j\le n.
\end{array}
$$
After substituting $\d_xu_j$ from \reff{quasilin} and using the notation \reff{notation_u}, the last system reads as follows:
\beq\label{quasilin_t}
\begin{array}{ll}
	\displaystyle\partial_tv_j+A_j(x,t)\partial_xv_j
	=\widetilde f_j\left(x,t,v,\left[Mv\right]_j\right),\qquad j\le n,
\end{array}
\ee
where
\beq\label{not3}
\begin{array}{ll}\
	\displaystyle\widetilde f_j\left(x,t,v,\left[Mv\right]_j\right)=\displaystyle F_{jt}-\frac{A_{jt}}{A_j}F_j-\d_{n+3}f_j\left[M^\prime u\right]_j(x,t)
	+\frac{A_{jj}}{A_j}v_j^2
	\\ [3mm]
	\displaystyle \qquad
\displaystyle+\bigg[F_{jj}+\frac{A_{jt}}{A_j}-\frac{F_jA_{jj}}{A_j}+\frac{1}{A_j}\sum_{k\not=j}A_{jk}v_k\bigg]v_j-\frac{F_j}{A_j}\sum_{k\not=j}A_{jk}v_k+\sum_{k\not=j}F_{jk}v_k-\d_{n+3}f_j\left[Mv\right]_j(x,t).	
\end{array}
\ee
Again, since $u$ is fixed, we suppress its dependence in the notation of 
$\widetilde f_j$.
Therefore, the system \reff{quasilin}, \reff{bc}, \reff{per} obtained
by differentiation with respect to $t$ takes the form \reff{quasilin_t}, \reff{bc_t}, \reff{per_t}. 
This system is semilinear
in the unknown $v$ and is of the form \reff{semilin}, \reff{bc}, \reff{per}. Moreover,  $v$ is a classical $C^1$-solution
to this  system.

We prove Theorem~\ref{thm:quasilin} in $l-2$ steps,  applying Theorem  \ref{thm:semilin} at each step. In order to verify the nonresonance conditions \reff{nonrez_semilin}, we first compute
\beq\label{not4}
\begin{array}{rcl}
	\displaystyle\d_{v_j}\widetilde f_j(x,t,v,q)&=&\displaystyle 2\frac{A_{jj}}{A_j}v_j+F_{jj}+\frac{A_{jt}}{A_j}-\frac{F_jA_{jj}}{A_j}+\frac{1}{A_j}\sum_{k\not=j}A_{jk}v_k.
\end{array}
\ee
Hence, the nonresonance conditions \reff{nonrez_semilin} for all
integers 
$i$ satisfying $2\le i\le l-1$, when applied to the system
\reff{quasilin_t}, \reff{bc_t}, \reff{per_t}, are precisely the
conditions \reff{nonrez_quasilin} for the corresponding integers $i$
satisfying $3\le i\le l$.

In the first step we show that
$
u\in C^3(\Pi;\mathbb R^n).
$
To this end, we prove that the $C^1$-solution $v$ to the problem
\reff{quasilin_t}, \reff{bc_t}, \reff{per_t}
actually belongs to $C^2$. Note that all coefficients in \reff{quasilin_t} are of class $C^2$. 
Now,  the regularity assumptions on the coefficients of 
 (\ref{quasilin}) and (\ref{bc}) imply that the coefficients of \reff{quasilin_t} and \reff{bc_t} satisfy the regularity assumptions of Theorem~\ref{thm:semilin} with~$l=2$. 
Taking into account the nonresonance condition
\reff{nonrez_quasilin}   for $i=3$ (equivalently, \reff{nonrez_semilin} for $i=2$), Theorem  \ref{thm:semilin} guarantees the $C^2$-regularity of $v=u_t$.
The desired $C^3$-regularity of $u$ now follows from the system 
(\ref{quasilin}), differentiated once with respect to $x$.
 
 Proceeding by induction,  we assume that $v\in C^{l-2}(\Pi;\R^n)$ and  $u\in C^{l-1}(\Pi;\R^n)$ and prove that 
 $u\in C^{l}(\Pi;\R^n)$. Accordingly to our line of the proof, we first show that
 $v\in C^{l-1}(\Pi;\R^n)$.
Under the inductive assumption (which implies the required regularity of $u$) and the nonresonance condition
 \reff{nonrez_quasilin}   for $i=l$ (equivalently, \reff{nonrez_semilin}   for $i=l-1$ ),  the coefficients of \reff{quasilin_t} and \reff{bc_t} satisfy the regularity assumptions of Theorem \ref{thm:semilin} required to obtain the $C^{l-1}(\Pi;\R^n)$-regularity of $v$.  Finally, differentiating \reff{quasilin_t} 
 $l-2$ times
with respect to $x$ yields
$
u\in C^{l}(\Pi;\R^n),
$
which completes the induction.

\section{The completely nonresonant setting}	\label{sec:nonres}
In this section, we characterize a subclass of the problems under consideration, the so-called completely nonresonant problems, for which neither the existence nor the higher regularity of time-periodic solutions requires any nonresonance conditions. Problems in this subclass are unconditionally nonresonant, in the sense that resonances, and hence small-divisor effects, are excluded independently of the particular coefficients or solutions involved.

We analyze this issue  in the linear setting, since, as shown in Subsections~\ref{2} and~\ref{3}, the semilinear and quasilinear cases reduce to the linear one.

As follows from the proof of Theorem~\ref{thm:lin} (in particular,
 Claims~\ref{I-C} and~\ref{I-C_1}) together with
  Lemma~\ref{lem:smoothing}, nonresonance conditions are naturally
   formulated in terms of the operator $\CC$ defined by \reff{CDF}. More
    generally, they are expressed through the operators $I-\CC^m$,  with
     their most general formulation given by Condition~1 of Lemma~\ref{lem:smoothing}. The explicit conditions \reff{nonrez} of Theorem~\ref{thm:lin} provide constructive sufficient criteria for Condition~1.

A distinguished class of completely nonresonant problems is characterized by the nilpotency of $\CC$, which implies Condition~1 of Lemma~\ref{lem:smoothing}.

By Theorem~1.7 of \cite{K-FTS}, nilpotency of $\CC$ is equivalent to finite-time stability of the corresponding subproblems. Together with the combinatorial and algebraic criteria for robust finite-time stability established in \cite[Theorems~1.6 and~1.11]{KL21}, this yields several sufficient conditions for complete nonresonance.
Consequently, combining Theorem~\ref{thm:lin} with these results, we obtain the following regularity theorem in the completely nonresonant setting.

\begin{thm}\label{ss22} Let $l\ge 1$ be arbitrary fixed integer.
	Assume that all conditions excepting \reff{nonrez} of Theorem \ref{thm:lin} are satisfied.
	Moreover, let $W=(w_{jk})$ be a  constant zero-one $n\times n$-matrix and   $r_{jk}(t)=\tilde r_{jk}(t)w_{jk}$,
	where $\tilde r_{jk}\in C_{per}^l$ for all $j,k\le n$. Assume that   the matrix $W$ fulfills one of the
	following  equivalent conditions:
	
	$(\io)$ the products $w_{i_1i_2}w_{i_2i_3}\cdots w_{i_{n}i_{n+1}}$  equal  zero for all tuples
	$\left(i_1,\dots,i_{n+1}\right)\in\left\{1,\dots,n\right\}^{n+1}$;
	
	$(\io\io)$ all principal minors of  the matrix $W$ equal zero;
	
$(\io\io\io)$ the matrix $W$ is nilpotent, with
	$W^n=0$;
	
	$(\io v)$ $W$ is the adjacency matrix of a directed acyclic graph.

Then	any continuous  solution to the problem (\ref{lin})--\reff{per} 
	belongs to~$C_{per}^l(\Pi;\R^n)$.
\end{thm}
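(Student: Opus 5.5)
The plan is to rerun the proof of Theorem~\ref{thm:lin} unchanged, except at the one place where the nonresonance condition \reff{nonrez} was used. That condition enters only through \reff{invert}, i.e.\ through the invertibility of $I-G_i$ in Claims~\ref{I-C} and~\ref{I-C_1} and their higher-order analogues. Claims~\ref{B2}, \ref{C1}, \ref{B22} and the regularity of $\F(f,u)$ do not use \reff{nonrez} at all. So it suffices to show that, under the structural assumption $r_{jk}=\tilde r_{jk}w_{jk}$, the implication in Condition~1 of Lemma~\ref{lem:smoothing} (with $m=1$) still holds in every step. I will do this by proving that each operator $G_i$ in \reff{Ci} is nilpotent, with $G_i^n=0$ for all $i\in\N_0$.

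\textbf{Step 1: equivalence of $(\io)$--$(\io v)$.} This is finite combinatorics for a nonnegative matrix.
\begin{itemize}
\item $(\io)\Leftrightarrow(\io\io\io)$: the entry $(W^n)_{i_1 i_{n+1}}$ is the sum of the products $w_{i_1i_2}\cdots w_{i_ni_{n+1}}$. Since all terms are nonnegative, the sum vanishes iff every product vanishes.
\item $(\io\io\io)\Leftrightarrow(\io v)$: a walk of length $n$ in the graph with adjacency matrix $W$ exists iff there is a cycle.
\item $(\io\io)\Leftrightarrow(\io\io\io)$: the coefficients of the characteristic polynomial are, up to sign, the sums of principal minors of fixed order. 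Hence $(\io\io)$ gives $\det(\lambda I-W)=\lambda^n$, and Cayley--Hamilton yields $W^n=0$. Conversely, if the graph is acyclic, a topological ordering makes every principal submatrix strictly triangular after a permutation, so all principal minors are zero.
\end{itemize}

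\textbf{Step 2: nilpotency of $G_i$.} Write $s_j(t)=\omega_j(x_j,1-x_j,t)$. Iterating \reff{Ci} gives
$$[G_i^n\psi]_{j}(t)=\sum_{k_1,\dots,k_n} \gamma_{jk_1\dots k_n}(t)\, w_{jk_1}w_{k_1k_2}\cdots w_{k_{n-1}k_n}\,\psi_{k_n}(\sigma(t)),$$
where $\sigma$ is the composition $s_{k_{n-1}}\circ\cdots\circ s_{k_1}\circ s_j$ and $\gamma$ is a product of the factors $c^i_{\cdot}$ and $\tilde r_{\cdot\cdot}$ evaluated at intermediate times. The zero--one factors are constant, so they pull out of every product. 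By $(\io)$ every such product vanishes, hence $G_i^n=0$.

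\textbf{Step 3: replacing the contraction argument.} In Claim~\ref{I-C} the boundary trace $z$ satisfies $z=G_0z+\tilde g$ by \reff{simpl1}. Iterating $n$ times gives
$$z=\sum_{p=0}^{n-1}G_0^p\tilde g .$$
Since $G_0$ maps $C^1_{per}(\R;\R^n)$ into itself (coefficients $a_j,b_{jj},r_{jk}\in C^1$), we get $z\in C^1_{per}$ directly, with no weak-limit argument and no use of \reff{invert}.

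For Claim~\ref{I-C_1} I will use the relation \reff{I-G2}, namely $z'=G_1z'+W_1z+\tilde g'$, which is a pointwise identity once $z\in C^1$. The same iteration with $G_1^n=0$ gives $z'=\sum_{p<n}G_1^p(W_1z+\tilde g')$. Since $W_1z+\tilde g'\in C^1$, this yields $z\in C^2$. Proceeding inductively, at order $i$ one gets $z^{(i-1)}=G_{i-1}z^{(i-1)}+(\text{terms in }C^1)$. Nilpotency of $G_{i-1}$ then gives $z\in C^{i}$, for all $i\le l$. Together with \reff{simpl}, this is Condition~1 of Lemma~\ref{lem:smoothing} at each level $V=C^{i-1}_{per}$, $U=C^i_{per}$.

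Equivalently, one could observe that $\CC^n=0$, so that Condition~1 holds trivially with $m=n$. However, that route forces $m>1$ and would require verifying the extra term $\CC\E u\in U$ in Condition~3. I will avoid it by keeping $m=1$ and using nilpotency only inside the Condition~1 claims.

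\textbf{Step 4 and expected obstacle.} The rest is verbatim from Section~\ref{1}: Lemma~\ref{lem:smoothing} with $m=1$, $k=2$, $\E=Q+B$, applied level by level. The main point needing care is Step~2. One has to track the index chain correctly through the compositions $\omega_{k}(x_k,1-x_k,\cdot)$, and make sure the $w$-factors appear in exactly the order $w_{jk_1}w_{k_1k_2}\cdots$. That ordering is what makes $(\io)$ apply, and it holds because each application of $R$ contributes $r_{jk}$ at the reflected time.
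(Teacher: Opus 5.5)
Your argument is correct, but it is organized differently from the paper's.

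\textbf{The paper's route.} The paper works at the level of $\CC$. It notes that the pattern $r_{jk}=\tilde r_{jk}w_{jk}$ makes $\CC$ nilpotent, so Condition~1 of Lemma~\ref{lem:smoothing} holds trivially with $m$ equal to the nilpotency index. It takes the equivalence of $(\io)$--$(\io v)$, and their link to nilpotency, from the finite-time-stability results in \cite{K-FTS} and \cite{KL21}. It then refers back to the proof of Theorem~\ref{thm:lin}.

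\textbf{Your route.} You prove the equivalences by elementary combinatorics and derive $G_i^n=0$ directly from $(\io)$. You stay at $m=1$ and replace the contraction and weak-limit argument of Claims~\ref{I-C} and~\ref{I-C_1} by a finite Neumann series. The paper's framing ties complete nonresonance to finite-time stability, which also covers nilpotency mechanisms beyond a fixed zero--one pattern. Yours is self-contained.

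\textbf{On $m>1$.} Your caution is justified. For $\E=Q+B$, the term $\CC\E u$ is in general not in $U$ for arbitrary $u\in V$; for instance, $\CC Bu$ is only a transported boundary trace of a single characteristic integral of $u$. The paper does not check this. The hypothesis is, however, superfluous: \reff{u-Cu} already gives $(I-\CC)u\in U$, and then $(I-\CC^m)u=(I+\CC+\dots+\CC^{m-1})(I-\CC)u\in U$ by Condition~2.

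\textbf{A shortcut.} Your induction through $G_1,\dots,G_{l-1}$ can be shortened. Since $G_0$ maps $C^i_{per}(\R;\R^n)$ into itself for every $i\le l$, the identity $z=\sum_{p<n}G_0^p\tilde g$ gives $z\in C^i_{per}(\R;\R^n)$ at each level directly. The differentiated boundary relations are needed only in the contraction setting of Theorem~\ref{thm:lin}, where each $G_i$ must be controlled separately.
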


\section{Further remarks and discussion}\label{nonrez-cond}

\subsection{Why nonresonance conditions appear}\label{why_nonrez}

In time-periodic linear hyperbolic systems with reflection boundary
conditions, solutions propagate along characteristic curves and are coupled at
the boundaries by reflection rules, possibly changing components.
Iteration of this
propagation-reflection mechanism produces a return dynamics on
$[0,1]\times(\mathbb R/2\pi\mathbb Z)$. As waves complete round-trips through this feedback loop, they accumulate phase shifts from both propagation and boundary interactions.
Resonances occur when temporal
oscillations synchronize with these  accumulated phases,  resulting in wave reinforcement and the formation of sharp resonant modes.

This mechanism is most transparent in the autonomous linear case. A
time-periodic solution can then be expanded into Fourier modes,
$$
u(x,t)=\sum_{k\in\mathbb Z} u^{(k)}(x)e^{ik\omega t}.
$$
After substitution into the system, one obtains a family of boundary value
problems for the Fourier coefficients $u^{(k)}$. The spectral parameter
$ik\omega$ enters through
$$
\partial_t u(x,t)
=
\sum_{k\in\mathbb Z} ik\omega\,u^{(k)}(x)e^{ik\omega t}.
$$
Thus, for each $k\neq0$, solvability and regularity depend on the
invertibility properties of the corresponding Fourier-mode operator. Both cases exhibit resonance, characterized either by a failure of operator invertibility for certain $k$ (exact resonance) or by the rapid growth of the inverse operator as $|k|\to\infty$ (approximate resonance).  The latter
produces small divisors in the formulas for $u^{(k)}$. In more detail, the coefficients
$u^{(k)}$ may then decay too slowly, even if the data are smooth. Since $m$
time derivatives require the convergence of
$$
\sum_{k\in\mathbb Z} (ik\omega)^m u^{(k)}(x)e^{ik\omega t},
$$
insufficient decay of $u^{(k)}$ obstructs $C^m$-regularity in time.

Nonresonance conditions exclude exact resonances and control approximate ones.
In the autonomous setting, they provide the bounds on the Fourier-mode problems
that yield the decay of $u^{(k)}$ required for the convergence of the
differentiated Fourier series.

The results of this paper show that the nonautonomous case is qualitatively
different. Because Fourier modes are not decoupled, regularity of the data alone
does not, in general, imply higher regularity of time-periodic solutions.
Additional nonresonance conditions are required, and their number is tied to
the desired order of regularity. The example in Subsection \ref{number} below shows that this
requirement is, in general, unavoidable.


\subsection{The case $n=2$}
If the system \reff{lin} (respectively, \reff{semilin} and \reff{quasilin}) consists of two equations, then the 
nonresonance conditions \reff{nonrez} (respectively, \reff{nonrez_semilin} and \reff{nonrez_quasilin}) can be refined into more explicit conditions.
In particular, if $n=2$, the coefficients $a_j$ and $b_{jj}$ are independent of $t$, and the boundary conditions are of the form
$$
u_1(0,t)=r_0u_2(0,t),\quad 	u_2(1,t)=r_1u_1(1,t),
$$
then \reff{nonrez} can be replaced by the weaker  condition (see \cite[Remark 1.6]{KR_separation})
$$
|r_0r_1|\ne\exp\left\{\int_{0}^{1}\left(\frac{b_{11}(x)}{a_1(x)}-\frac{b_{22}(x)}{a_2(x)}\right)\dd x\right\}.
$$
This is, in fact, the necessary and sufficient condition (see \cite[Remarks 1.6 and 1.9]{KR_separation}).
The analogue of the last condition in the nonautonomous setting, when  $a_j$, $b_{jj}$, $r_0$, and $r_1$ depend on~$t$,
takes the form (see \cite[Condition (3.5)]{KmKl})
$$
\begin{array}{cc}
	\displaystyle\exp{\int_{0}^{1}\left[\left (\frac{b_{22}}{a_2}\right )(\eta,\omega_2(\eta,0,\omega_1(0,1,t)))-\left (\frac{b_{11}}{a_1}\right )(\eta,\omega_1(\eta,1,t))\right] d\eta}
	\\ [6mm]
	\times\left|r_0(\omega_1(0,1,t))r_1(\omega_2(1,0,\omega_1(0,1,t)))\right|\neq 1 \quad\textrm{for all}\; t\in\R.
\end{array}
$$

\subsection{The role of the lower-order terms in the nonresonance conditions  \reff{nonrez}, \reff{nonrez_semilin}, and \reff{nonrez_quasilin}}\label{lower_terms}
The nonresonance conditions \reff{nonrez}, \reff{nonrez_semilin}, and~\reff{nonrez_quasilin} established in Theorems~\ref{thm:lin}, \ref{thm:semilin}, and~\ref{thm:quasilin} are sufficient conditions. They depend on the leading coefficients $a_j$, the diagonal zero-order coefficients $b_{jj}$ of the hyperbolic operator, and the reflection coefficients $r_{jk}$ appearing in the boundary operator.

The appearance of $a_j$ and $r_{jk}$ can be explained by the dissipative nature of the problems under consideration.
Specifically,  energy is dissipated along the characteristic curves defined by \reff{char} due to repeated reflections at the boundary.

What, then, is the role of the coefficients $b_{jj}$? The answer comes from the following simple calculation.
Consider the    linear system
\beq\label{kk2}
\partial_tu_j+a_j(x,t)\partial_xu_j+\sum_{k=1}^n b_{jk}(x,t)u_k= f_j(x,t),\quad j\le n,
\ee
\beq\label{kk3}
\begin{array}{ll}
	u_{j}(0,t)= [Ru]_j(t)+h_j(t), \quad 1\le j\le m,\\ [1mm]
	u_{j}(1,t)= [Ru]_j(t)+h_j(t), \quad m< j\le n.
\end{array}
\ee
We dropped here  integral contributions, since these do not influence resonance phenomena. 
Applying to \reff{kk2}--\reff{kk3} the (non-degenerate) change of variables $u_j\mapsto v_j$ given by
$$
u_j(x,t)=v_j(x,t)\exp\biggl\{\int^{1-x_j}_x\frac{b_{jj}(\eta,\om_j(\eta,x,t))}{a_j(\eta,\om_j(\eta,x,t))}\dd \eta\biggr\},\quad
j\le n,
$$
where $x_j$ are introduced in \reff{*k}, one can rewrite \reff{kk2}--\reff{kk3} in the following equivalent form, where the leading part of \reff{kk2} is preserved, the diagonal lower-order terms are eliminated, and all other  coefficients are transformed appropriately:
\beq\label{kk2v}
\partial_tv_j+a_j(x,t)\partial_xv_j+e_j^{-1}(x,t)\sum_{k\ne j}e_k(x,t)\, b_{jk}(x,t)v_k=e_j^{-1}(x,t)\,f_j(x,t),\quad j\le n,
\ee
\beq\label{kk3v}
\begin{array}{rcl}
	v_{j}(x_j,t)= e_j^{-1}(x_j,t)\, [Rv]_j(t)+e_j^{-1}(x_j,t)\,h_j(t), \quad j\le n.
\end{array}
\ee
Here
$$
e_j(x,t)=\exp\biggl\{\int^{1-x_j}_x\frac{b_{jj}(\eta,\om_j(\eta))}{a_j(\eta,\om_j(\eta))}\dd \eta\biggr\}, \quad j\le n.
$$
Observe that in the  system \reff{kk2v}--\reff{kk3v}, the diagonal 
coefficients $b_{jj}$ are  absorbed into the reflective boundary coefficients 
(new reflective coefficients in \reff{kk3v} take the form $e_j^{-1}(x_j,t)\,r_{jk}(t)$), which means that they also participate in the 
dissipative mechanism.

\subsection{Autonomy and the reduction to a single nonresonance condition} 
\label{autonom}
As stated in Theorems \ref{thm:lin}, \ref{thm:semilin}, and~\ref{thm:quasilin}, any higher order of regularity of solutions requires, in general, additional nonresonance conditions. More precisely, accordingly to 
\reff{nonrez} (and similarly for \reff{nonrez_semilin} and \reff{nonrez_quasilin}),
to obtain $C^1$-regularity of a continuous solution to the linear problem, one generally needs $n$ nonresonance conditions, while achieving $C^l$-regularity typically requires $nl$ such conditions.
There is, however, an important exception to this rule, namely the completely nonresonant case, in which no nonresonance conditions are required at all. This class of problems is described in Section~\ref{sec:nonres}.

Another exception concerns the explicit dependence of the leading coefficients $a_j$
on $t$. Specifically, if a coefficient 
$a_j$ is independent explicitly of $t$ for some  $j\in\{1,\dots,n\}$, then, accordingly to~\reff{nonrez}, the maximal number of nonresonance conditions needed for $C^l$-regularity
of a continuous solution in the linear setting  is reduced to $nl-(n-1)$, since the condition \reff{nonrez} corresponding to this index $j$ is the same for all $1\le i\le l$. 
In particular, if all coefficients $a_j$ are independent of $t$ (which includes the autonomous case),  not more than $n$ nonresonance conditions suffice to obtain arbitrary high regularity.
A similar observation is true in the semilinear and quasilinear settings.

In view of this observation, the following result follows as a consequence of Theorems \ref{thm:lin}, \ref{thm:semilin}, and 
\ref{thm:quasilin}.
\begin{cor}\label{x}\rm
	{\bf 1.} Let the condition \reff{aj} be satisfied, and suppose that
	the coeﬃcients $a_j$, $j\le n$, are independent of $t$.  Moreover, assume that the condition \reff{nonrez} 
	holds for $i=1$, and that all coefficients $a_j$, $b_{jk}$, $f_j$, $m_{jk}$, $h_j$, $r_{jk}$, and $p_{jk}$ are
	$C^\infty$-smooth in their arguments. Then any continuous solution to the linear problem 
	(\ref{lin})--\reff{per} is $C^\infty$-smooth.\\
	{\bf 2.}  
	Let the condition \reff{aj} be satisfied, and suppose that the coefficients $a_j$, $j\le n$,  are independent of $t$. 
	Let $u$ be a classical solution to 
	the semilinear problem \reff{semilin}, \reff{bc}, \reff{per}. Moreover, assume that the condition \reff{nonrez_semilin} 
	holds for $i=2$, and that
	all coefficients $a_j$, $b_{jk}$, $f_j$, $m_{jk}$, $h_j$, and $p_{jk}$ are
	$C^\infty$-smooth in their arguments. Then $u$ is $C^\infty$-smooth.\\
	{\bf 3.} 
	Let the condition \reff{aj_quasilin} be satisfied, and suppose that the coefficients $a_j$, $j\le n$,  do not depend explicitly on $t$. 
	Let	 $u$ be a classical $C^2$-solution to 
	the quasilinear problem \reff{semilin}, \reff{bc}, \reff{per}. Moreover, assume that the condition \reff{nonrez_quasilin} is satisfied
	for $i=3$, and that  all coefficients $a_j$, $b_{jk}$, $f_j$, $m_{jk}$, $h_j$, and $p_{jk}$ are
	$C^\infty$-smooth in their arguments. Then $u$ is $C^\infty$-smooth.
\end{cor}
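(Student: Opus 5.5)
The plan is to derive each part of Corollary~\ref{x} directly from the corresponding Theorem~\ref{thm:lin}, \ref{thm:semilin}, or \ref{thm:quasilin}, applied with arbitrary $l$. The point is to show that $t$-independence of $a_j$ makes the whole infinite family of nonresonance conditions collapse to the single one assumed.

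\textbf{Part 1.} If $\partial_t a_j\equiv 0$ for all $j$, the term $i\,\partial_t a_j/a_j^2$ vanishes identically in \reff{nonrez}. Hence the left-hand side of \reff{nonrez} does not depend on $i$, and \reff{nonrez} for $i=1$ implies \reff{nonrez} for all $1\le i\le l$, for every $l\ge1$. The $C^\infty$-smoothness of the coefficients gives every regularity hypothesis of Theorem~\ref{thm:lin} for every $l$: $C^l_{per}$, $C^{l-1}_{per}\cap C^l_t$, $C^{l-1}_{per}\cap C^l_x$, and the conditions on $p_{jk}$ and $\partial_x p_{jk}$. Condition \reff{aj} is assumed. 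Theorem~\ref{thm:lin} therefore yields $u\in C^l_{per}$ for every $l$, that is, $u\in C^\infty$.

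\textbf{Part 2.} Here $a_j=a_j(x)$, so again $\partial_t a_j=0$, and the exponent in \reff{nonrez_semilin} reduces to $-F_{jj}/a_j$ for every $i$. The coefficients $r_{jk}$ in \reff{r_t} and $F_{jj}$ are fixed by the given solution $u$ and do not depend on $i$ either. Hence \reff{nonrez_semilin} for $i=2$ gives it for all $2\le i\le l$. Smoothness of the data gives the $C^l_{per}$ hypotheses for every $l\ge2$, so Theorem~\ref{thm:semilin} applies for every $l$ and $u\in C^\infty$.

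\textbf{Part 3.} Read with \reff{quasilin} in place of \reff{semilin} (the citation in the statement appears to be a typo). Suppose $a_j$ does not depend explicitly on $t$. Then $A_{jt}=\partial_2 a_j(x,t,u)\equiv0$, so the only $i$-dependent term in \reff{nonrez_quasilin} disappears. The remaining quantities $F_j$, $F_{jj}$, $A_j$, $A_{jk}$, $\partial_t u_k$, $r_{jk}$, and the characteristics $\omega_j$ from \reff{char_q} are all determined by the fixed $C^2$ solution $u$ and do not depend on $i$. Condition \reff{nonrez_quasilin} for $i=3$ therefore gives it for all $3\le i\le l$. Together with \reff{aj_quasilin}, Theorem~\ref{thm:quasilin} applies for every $l\ge3$.

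The only potential obstacle is checking that no other $i$-dependence is hidden in the conditions. In the quasilinear case one must make sure the implicit $t$-dependence through $u$ does not enter via $A_{jt}$. It does not, because $A_{jt}$ is defined in \reff{notation_u} as the partial derivative in the second argument only; the $u$-dependence appears through the $A_{jk}\partial_t u_k$ terms, which carry no factor $i$.
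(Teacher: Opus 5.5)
Parts~1 and~2 are correct and follow the paper's argument. The paper relies on the observation in Subsection~\ref{autonom} that in \reff{nonrez} and \reff{nonrez_semilin} the index $i$ enters only through $i\,\partial_t a_j/a_j^2$, and then applies Theorem~\ref{thm:lin}, resp.\ Theorem~\ref{thm:semilin}, for every $l$.

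Part~3 is also the paper's own deduction, but it breaks down exactly at the check you single out as the only potential obstacle. In the proof of Theorem~\ref{thm:quasilin}, Theorem~\ref{thm:semilin} is applied to \reff{quasilin_t}, whose principal coefficient is the function $A_j(x,t)=a_j(x,t,u(x,t))$. The index in \reff{nonrez_semilin} counts factors $\partial_t\omega_j$ (cf.\ \reff{dx}). For the characteristics \reff{char_q} these factors are governed by the total derivative $\partial_t[A_j]=A_{jt}+\sum_k A_{jk}\partial_t u_k$, not by $A_{jt}$ alone. Combine the term $-s\,\partial_t[A_j]/A_j^2$ of \reff{nonrez_semilin} (index $s$) with $-\partial_{v_j}\widetilde f_j/A_j$ from \reff{not4} at $v=\partial_t u$. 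For the quasilinear index $i=s+1$ the exponent is then
\[
\frac{F_jA_{jj}}{A_j^2}-\frac{F_{jj}}{A_j}-\frac{A_{jj}}{A_j^2}\,\partial_t u_j-\frac{i}{A_j^2}\Bigl(A_{jt}+\sum_{k=1}^n A_{jk}\,\partial_t u_k\Bigr).
\]
So the term $\frac{1}{A_j^2}\sum_k A_{jk}\partial_t u_k$ in \reff{nonrez_quasilin} should carry the factor $i$. Your remark that these terms ``carry no factor $i$'' is true of the printed formula, but not of the mechanism. When $a_j$ depends on $u$ and $u$ depends on $t$, $\partial_2 a_j\equiv0$ does not remove the $i$-dependence.

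This makes Part~3 false as stated; here is a counterexample.
\begin{itemize}
\item Append to \reff{ex} a component $u_3$ with $\partial_t u_3+\partial_x u_3=\cos t$ and $u_3(0,t)=\sin t$, so that $u_3\equiv\sin t$.
\item Give $u_2$ the speed $a_2(u)=-(2+u_3)$, which has no explicit $t$-dependence. Then $A_{2t}=A_{22}=F_2=F_{22}=0$ and $A_{23}=-1$.
\item Set $\tilde a(t)=\partial_t\omega_2(1,0,t)=a(t)/a(\omega_2(1,0,t))$ as in \reff{der}. For every $i$, the printed condition \reff{nonrez_quasilin} with $j=2$ reduces to $r_2\,\tilde a(t)<1$. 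For $j=1$ and $j=3$ it reduces to $r_1<1$ and $0<1$.
\item However, the $(u_1,u_2)$-subsystem is literally the linear problem \reff{ex}, for which \reff{nonrez} with index $i$ reads $r_2\,\tilde a(t)^i<1$.
\item Choose the speed of $u_1$ so that a fixed point $t^*$ of the return map in the analogue of \reff{3a} sits at a maximum of $\tilde a$. Take $0<r_1<1$ with $r_1(t^*)\in(1/\tilde a(t^*),1)$, and set $r_2=\bigl(r_1(t^*)\tilde a(t^*)^3\bigr)^{-1}$.
\item Then $r_2\sup\tilde a^2<1$, so $u\in C^2$ by Theorem~\ref{thm:lin}, and all hypotheses of Part~3 hold.
\item Now differentiate the analogue of \reff{3a} three times at $t^*$, as in Subsection~\ref{number}. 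The coefficient $1-r_2r_1(t^*)\tilde a(t^*)^3$ of $\partial_t^3u_2(0,t^*)$ vanishes. The remaining relation fails for generic $r_1'''(t^*)$, because $u_2(0,t^*)\ne0$. Hence $u\notin C^3$.
\end{itemize}

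With the corrected exponent, your $i$-independence argument goes through only under the stronger hypothesis $\partial_t\bigl[a_j(x,t,u(x,t))\bigr]\equiv0$ for all $j$. An example is $a_j$ independent of both $t$ and $u$, which is essentially Part~2.
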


\subsection{Essential role of additional nonresonance conditions in the nonautonomous setting}\label{number}
Here we show that the conditions \reff{nonrez} (and similarly for  \reff{nonrez_semilin} and \reff{nonrez_quasilin}), although sufficient,  are in general essentially necessary for higher regularity.

We start with a preliminary explanation. Differentiating an {\it autonomous} linear hyperbolic system $i$ times with respect to $t$ yields a system for $\d_t^iu$ with the same principal part, the same diagonal lower-order terms, and the same reflective boundary coefficients.
By contrast, in the {\it nonautonomous} case this procedure produces a shifted diagonal lower-order part of the hyperbolic operator. In particular, the diagonal coefficients $b_{jj}$ are replaced by   $b_{jj}-i\frac{\d_ta_j}{a_{j}}$ for $j\le n$, reflecting the explicit time dependence of the coefficients~$a_j$. 

We will, therefore, concentrate on the nonautonomous leading coefficients $a_j$,   when at least one of the coefficients $a_j$ depends on $t$.  To this end, we revisit  the example of the linear problem (\ref{lin})--\reff{per} discussed  in a similar context in \cite[Subsection 3.6]{KRT_evol}.  Specifically, we consider the system
\begin{eqnarray}
	& & \partial_t u_1  + \frac{2}{4\pi-1}\partial_xu_1  = 1, \quad \partial_t u_2  - (2+\sin t)\partial_x u_2  = 0,   
	\nonumber\\ 
	& & u_j(x,t+2\pi) = u_j(x,t),\quad j=1,2,\label{ex}\\ 
	& & u_1(0,t)=r_1(t) u_2(0,t),\quad u_2(1,t)=r_2 u_1(1,t), \nonumber
\end{eqnarray}
where the $2\pi$-periodic function $r_1$ and the  constant $r_2$ are such that
\beq\label{rr}
0<\sup_{t\in\R}r_1(t)<1, \quad 0 < r_2 < 1.
\ee

We first show that the problem \reff{ex}, under the condition \reff{rr}, admits a unique continuous solution. We then prove that this solution fails to be $C^1$-regular unless the nonresonance condition \reff{nonrez} for $i=1$ holds. Note that the condition \reff{rr} does not imply \reff{nonrez} for $i=1$. In other words, we will show that the condition \reff{nonrez} is not necessary for the existence of a continuous solution but it is essential for the existence of a $C^1$-solution.

The characteristics of \reff{ex} are given by the formulas
\begin{eqnarray*}
	& & \om_1(\xi,x,t)=\frac{4\pi-1}{2}(\xi-x)+t, \\
	& & \om_2(\xi,x,t)=p^{-1}(p(t)+\xi-x),
\end{eqnarray*}
where $p(t)=-2t+\cos t$. Moreover,
\beq\label{der}
\begin{array}{rcl}
	\displaystyle\d_t\om_2(\xi,x,t)&= &\displaystyle\exp \int_\xi^x \left(\frac{a^\prime}{a^2}\right)(\om_2(\eta,x,t)) \dd \eta  \\ [3mm]
	\displaystyle   &=& \displaystyle \exp \int_\xi^x\frac{\dd}{\dd\eta}\ln a(\om_2(\eta,x,t))\dd \eta\
	=\frac{a(t)}{a(\om_2(\xi,x,t))} ,
\end{array}
\ee
where $a(t)=-2-\sin t$.
Then, integrating  the differential system   $\reff{ex}_1$ along the characteristic curves and using the boundary conditions $\reff{ex}_3$ leads to the following system of functional equations:
\begin{eqnarray}
	& & u_1(x,t)=r_1\left( t - \frac{4\pi-1}{2}x\right)u_2\left(0, t - \frac{4\pi-1}{2}x\right)+
	\frac{4\pi-1}{2}x,\label{1a} \\
	& & u_2(x,t)=r_2 u_1(1,p^{-1}(p(t) + 1 - x)). \label{2a}
\end{eqnarray}
By the Banach fixed point argument, the conditions \reff{rr} are sufficient to ensure existence and uniqueness of a continuous time-periodic solution to \reff{ex}.

Inserting \reff{1a} into \reff{2a}, we get
\beq
\label{3a}
\begin{array}{rcl}\displaystyle
	u_2(0,t)&=&\displaystyle r_2 r_1\left(p^{-1}(p(t)+1) -\frac{4\pi-1}{2}\right)\\ &&\times\displaystyle u_2\left(0, p^{-1}(p(t)+1) -\frac{4\pi-1}{2}\right) 
	+ r_2 \frac{4\pi-1}{2}.
\end{array}
\ee
Using the $2\pi$-periodicity of $u_2$ in $t$, we now determine the  values of $t$ for which $u_2$  has the same argument
on both sides of \reff{3a}. This occurs, for instance, if
$t - 2\pi = p^{-1}(p(t)+1) - (4\pi-1)/2$.
This equality holds if and only if $p(t)+1=p(t - \frac{1}{2})$, or
equivalently,
$$
\cos t-\cos \left(t - \frac{1}{2}\right)=-2\sin\left(t - \frac{1}{4}\right)\sin\left(\frac{1}{4}\right)=0.
$$
The last equation has the solutions $1/4+\pi k$, $k\in\Z$. Set $t_0=1/4$.
Then, due to \reff{rr}, the equation
\reff{3a} yields
\beq
\label{t0}
u_2(0,t_0)=r_2\ \frac{4\pi-1}{2(1-r_2r_1(t_0))}\ne 0.
\ee
Notice the obvious identity
$p^{-1}(p(t)+1)=\om_2(1,0,t)$. Further, if the derivative $\d_tu_2(0,t_0)$ exists, then it is given by the formula
\beq
\label{8a}
\d_tu_2(0,t_0)=r_2 r_1(t_0)\d_t\om_2(1,0,t_0)\d_tu_2(0,t_0)+r_2r_1^\prime(t_0)\d_t\om_2(1,0,t_0)u_2(0,t_0).
\ee
Due to \reff{der}, we have
\begin{eqnarray*}
	\d_t\om_2(1,0,t_0)=\frac{a(t_0)}{a(\om_2(1,0,t_0))}=
	\frac{-2-\sin(1/4)}{-2-\sin(-1/4)}>1.
\end{eqnarray*}
We can choose a constant $r_2$ and  a smooth $2\pi$-periodic function $r_1(t)$ such that,
in addition to  the condition \reff{rr}, it holds that
\beq
\label{10a}
r_2 r_1(t_0)\d_t\om_2(1,0,t_0) = 1 \mbox{ and } r_1^\prime(t_0)\not=0,
\ee
contradicting  \reff{t0}--\reff{8a}.
Consequently, the continuous solution to \reff{3a}, and therefore to \reff{1a}--\reff{2a},
fails to be differentiable at $t=t_0$.

The failure of the condition \reff{nonrez} for $i=1$
can be checked directly. Indeed, in view of \reff{10a}, for any $\psi\in BC(\R,\R^2)$ satisfying $\|\psi\|_{BC}=1$ and
$\psi_1(\omega_2(1,0,t_0))=1$, one has
\begin{eqnarray*}
	\lefteqn{ \|G_1\|_{\LL(BC(\R,\R^2))}\ge |(G_1\psi)_2(t_0)|
		= c_2^1(1,0,t_0)|(\widetilde R\psi)_2(\omega_2(1,0,t_0))|} \\ &&
	= c_2^1(1,0,t_0)r_2|\psi_1(\omega_2(1,0,t_0))| = r_2\exp \biggl\{\int_0^1\left(- \frac{a^\prime(\om_2(\eta,0,t_0))}{a(\om_2(\eta,0,t_0))^2}\right)\dd\eta\biggr\}\\
	& &  =  r_2\exp \biggl\{\int_1^0\frac{\dd}{\dd\eta}\ln a(\om_2(\eta,0,t_0))\dd\eta\biggr\}
	=  r_2\frac{a(t_0)}{a(\om_2(1,0,t_0))}
	=  r_2\d_t\om_2(1,0,t_0)>1,
\end{eqnarray*}
as desired.

The essential role of further nonresonance conditions in achieving higher regularity can be shown analogously.

\subsection{More about the number of nonresonance conditions}\label{number_more}

As follows from the proof of Theorem~\ref{thm:lin} in Section~\ref{regularity},  in particular, from the proof of Claim~\ref{I-C_1} in Subsection~\ref{1}, if a starting solution of the problem (\ref{lin})--\reff{per}  is not merely continuous but already possesses, say, $C^{l_0}_{per}$-regularity, then in order to obtain
$C^{l}_{per}$-regularity  with $l>l_0$, it suffices to impose no more than $(l-l_0)n$ nonresonance conditions. More precisely, in this case we require that the nonresonance conditions \reff{nonrez} be satisfied for $l_0+1\le i\le l$. A similar remark applies to  Theorems \ref{thm:semilin} and \ref{thm:quasilin}. 

\subsection{Nonresonance conditions when $a_j$ is independent of $x$}
\label{nonautonom}

Let us examine the condition \reff{nonrez} in the linear setting, in the special
case where the characteristic speeds $a_j$ do not depend explicitly on $x$.
Using the following calculations:
\begin{eqnarray*}
	\lefteqn{ \exp\biggl\{-i\int_{1 - x_j}^{x_j}
		\biggl(\frac{\d_ta_j}{a_{j}^2}\biggr)(\om_j(\eta,1 - x_j,t))\dd\eta\biggr\}} \\ &&=
	\exp\biggl\{i	\int_{x_j}^{1-x_j}\frac{\dd}{\dd\eta}\ln a_j(\eta,1 - x_j,t))\dd\eta\biggr\}
	=\left(\frac{a_j(t)}{a_j(\om_j(x_j,1 - x_j,t))}\right)^i,
\end{eqnarray*} 
this condition can be written in the more
transparent form
\beq\label{nonrez-x}
\left(\frac{a_j(t)}
{a_j(\omega_j(x_j,1-x_j,t))}\right)^i
\exp\left\{
\int_{1-x_j}^{x_j}
\left(\frac{b_{jj}}{a_j}\right)
(\eta,\omega_j(\eta,1-x_j,t))\,d\eta
\right\}
\sum_{k=1}^n
\left|r_{jk}\left(\omega_j(x_j,1-x_j,t)\right)\right|
<1
\ee
for all $t\in\mathbb R$ and for the relevant values of $i$, depending on
the desired order of regularity.

By \reff{aj}, the quotient
$$
\tilde a_j(t):=\frac{a_j(t)}{a_j(\omega_j(x_j,1-x_j,t))}
$$
is positive for all $t\in\mathbb R$ and $j\le n$. If $a_j$  does not also depend on $t$ for some $j\le n$, this quotient
is identically equal to one for this $j¸$. If $a_j$ is genuinely time-dependent, however, $\tilde a_j$
is in general not bounded from above by one. In particular,
whenever $\tilde a_j(t_0)>1$
for some $t_0$, the factor $\tilde a_j(t_0)^i$
grows exponentially as $i\to\infty$.

Consequently, for any fixed  $l_0\in\mathbb N$, condition
\reff{nonrez-x} may still hold for all $1\leq i\leq l_0$, provided the
remaining factors are sufficiently small. This can occur, for instance, if the
reflection coefficients $r_{jk}$ are sufficiently small, or if the exponential
factor in \reff{nonrez-x} is sufficiently damping. However, for fixed
coefficients, the presence of a quotient larger than one prevents
\reff{nonrez-x} from holding for arbitrarily large $i$.

This shows that, in contrast with the autonomous case for the coefficients
$a_j$ (see Corollary~\ref{x}), the nonautonomous case is qualitatively
different: one cannot in general expect arbitrarily high regularity of
continuous solutions of the linear problem (\ref{lin})--\reff{per} from a single nonresonance condition. Similar considerations apply
to the semilinear and quasilinear settings.

\subsection{About weak hyperbolicity} \label{weak} Our higher regularity results can also be established by our approach for a class of weakly hyperbolic systems, in which weak hyperbolicity is compensated by Levy-type conditions, such as those in \cite[Condition~(1.5)]{KR_regul_ex}.

\bibliographystyle{abbrv}
\bibliography{my_bibfile}

@book{Bertero98,
	author    = {Bertero, M. and Boccacci, P.},
	title     = {Introduction to Inverse Problems in Imaging},
	publisher = {CRC Press},
	year      = {1998}
}

@book{BertiBolle2020,
	author    = {Massimiliano Berti and Philippe Bolle},
	title     = {Quasi-Periodic Solutions of Nonlinear Wave Equations on the {$d$}-Dimensional Torus},
	series     = {EMS Monographs in Mathematics},
	publisher  = {European Mathematical Society Publishing House},
	address    = {Berlin},
	year       = {2020},
	isbn       = {978-3-03719-211-5},
	doi        = {10.4171/211}
}

@article{inv-waves,
	author  = {Blazek, K. and Stolk, C. and Symes, W.},
	title   = {A mathematical framework for inverse wave problems in heterogeneous media},
	journal = {Inverse Problems},
	volume  = {29},
	number  = {6},
	pages   = {065001},
	year    = {2013}
}

@article{Coron_Fr,
	author  = {Coron, J.-M. and Hu, L. and Olive, G. and Shang, P.},
	title   = {Boundary stabilization in finite time of one-dimensional linear hyperbolic balance laws with coefficients depending on time and space},
	journal = {Journal of Differential Equations},
	volume  = {271},
	pages   = {1109--1170},
	year    = {2021}
}

@article{CraigWayne,
	author  = {Craig, W. and Wayne, C. E.},
	title   = {Newton's method and periodic solutions of nonlinear wave equations},
	journal = {Communications on Pure and Applied Mathematics},
	volume  = {46},
	pages   = {1409--1498},
	year    = {1993}
}

@incollection{Dashti17,
	author    = {Dashti, M. and Stuart, A. M.},
	title     = {The Bayesian approach to inverse problems},
	booktitle = {Handbook of Uncertainty Quantification},
	publisher = {Springer},
	year      = {2017}
}

@article{Friedrichs,
	author  = {Friedrichs, K. O.},
	title   = {Nonlinear hyperbolic differential equations for functions of two independent variables},
	journal = {American Journal of Mathematics},
	volume  = {70},
	pages   = {555--589},
	year    = {1948}
}

@article{Gerken2,
	author  = {Gerken, T.},
	title   = {Dynamic inverse wave problems -- {Part II}: Operator identification and applications},
	journal = {Inverse Problems},
	volume  = {36},
	number  = {2},
	pages   = {024005},
	year    = {2020}
}

@article{Gerken1,
	author  = {Gerken, T. and Gr{\"u}tzner, S.},
	title   = {Dynamic inverse wave problems -- {Part I}: Regularity for the direct problem},
	journal = {Inverse Problems},
	volume  = {36},
	number  = {2},
	pages   = {024004},
	year    = {2020}
}

@book{Hale,
	author    = {Hale, J. K. and Magalhaes, L. T. and Oliva, W. M.},
	title     = {An Introduction to Infinite Dimensional Dynamical Systems: Geometric Theory},
	publisher = {Springer-Verlag},
	year      = {1984}
}

@article{Hale_Scheurle,
	author  = {Hale, J. K. and Scheurle, J.},
	title   = {Smoothness of bounded solutions of nonlinear evolution equations},
	journal = {Journal of Differential Equations},
	volume  = {56},
	pages   = {142--163},
	year    = {1985}
}

@article{Hillen,
	author  = {Hillen, T. and Rohde, C. and Lutscher, F.},
	title   = {Existence of weak solutions for a hyperbolic model of chemosensitive movement},
	journal = {Journal of Mathematical Analysis and Applications},
	volume  = {260},
	pages   = {173--199},
	year    = {2001}
}

@book{Horm,
	author    = {H{\"o}rmander, L.},
	title     = {The Analysis of Linear Partial Differential Operators {I}: Distribution Theory and Fourier Analysis},
	series    = {Grundlehren der mathematischen Wissenschaften},
	volume    = {256},
	publisher = {Springer-Verlag},
	year      = {1990}
}

@book{Ianelli,
	author    = {Iannelli, M.},
	title     = {Mathematical Theory of Age-Structured Population Dynamics},
	series    = {Applied Mathematics Monographs},
	publisher = {CNR, Giardini Editori e Stampatori},
	address   = {Pisa},
	year      = {1995}
}

@book{Inaba,
	author    = {Inaba, H.},
	title     = {Age-Structured Population Dynamics in Demography and Epidemiology},
	publisher = {Springer},
	year      = {2017}
}

@book{Kielhofer,
	author    = {Kielh{\"o}fer, H.},
	title     = {Bifurcation Theory: An Introduction with Applications to PDEs},
	series    = {Applied Mathematical Sciences},
	volume    = {156},
	publisher = {Springer},
	year      = {2004}
}

@book{Kirsch11,
	author    = {Kirsch, A.},
	title     = {An Introduction to the Mathematical Theory of Inverse Problems},
	edition   = {2},
	publisher = {Springer},
	year      = {2011}
}

@incollection{K-FTS,
	author    = {Kmit, I.},
	title     = {1D Hyperbolic Systems with Nonlinear Boundary Conditions {II}: Criteria for Finite Time Stability},
	booktitle = {Analysis, Applications, and Computations},
	series    = {Trends in Mathematics},
	publisher = {Birkh{\"a}user},
	address   = {Cham},
	pages     = {439--453},
	year      = {2023}
}

@article{KL21,
	author  = {Kmit, I. and Lyul'ko, N.},
	title   = {Finite time stabilization of nonautonomous first order hyperbolic systems},
	journal = {SIAM Journal on Control and Optimization},
	volume  = {59},
	number  = {5},
	pages   = {3179--3202},
	year    = {2021}
}

@article{KmKl,
	author  = {Kmit, I. and Klyuchnyk, R.},
	title   = {Fredholm solvability of time-periodic boundary value hyperbolic problems},
	journal = {Journal of Mathematical Analysis and Applications},
	volume  = {442},
	number  = {2},
	pages   = {804--819},
	year    = {2016}
}

@article{KR_regul_ex,
	author  = {Kmit, I. and Recke, L.},
	title   = {Fredholm alternative and solution regularity for time-periodic hyperbolic systems},
	journal = {Differential and Integral Equations},
	volume  = {29},
	number  = {11/12},
	pages   = {1049--1070},
	year    = {2016}
}

@article{KR_separation,
	author  = {Kmit, I. and Recke, L.},
	title   = {Fredholmness and smooth dependence for linear time-periodic hyperbolic systems},
	journal = {Journal of Differential Equations},
	volume  = {252},
	number  = {2},
	pages   = {1962--1986},
	year    = {2012}
}

@article{KR_Hopf,
	author  = {Kmit, I. and Recke, L.},
	title   = {Hopf bifurcation for semilinear dissipative hyperbolic systems},
	journal = {Journal of Differential Equations},
	volume  = {257},
	number  = {1},
	pages   = {264--309},
	year    = {2014}
}

@article{KR_autonom,
	author  = {Kmit, I. and Recke, L.},
	title   = {Regularity of time-periodic solutions to autonomous semilinear hyperbolic PDEs},
	journal = {Journal of Mathematical Analysis and Applications},
	volume  = {529},
	number  = {1},
	pages   = {127562},
	year    = {2024}
}

@article{KR_IFT,
	author  = {Kmit, I. and Recke, L.},
	title   = {Solution regularity and smooth dependence for abstract equations and applications to hyperbolic PDEs},
	journal = {Journal of Differential Equations},
	volume  = {259},
	pages   = {6287--6337},
	year    = {2015}
}

@article{KRT_evol,
	author  = {Kmit, I. and Recke, L. and Tkachenko, V.},
	title   = {Bounded and almost periodic solvability of nonautonomous quasilinear hyperbolic systems},
	journal = {Journal of Evolution Equations},
	volume  = {21},
	pages   = {4171--4212},
	year    = {2021}
}

@article{LRR,
	author  = {Lichtner, M. and Radziunas, M. and Recke, L.},
	title   = {Well-posedness, smooth dependence and center manifold reduction for a semilinear hyperbolic system from laser dynamics},
	journal = {Mathematical Methods in the Applied Sciences},
	volume  = {30},
	pages   = {931--960},
	year    = {2007}
}

@article{Pavel,
	author  = {Pavel, L.},
	title   = {Classical solutions in Sobolev spaces for a class of hyperbolic Lotka--Volterra systems},
	journal = {SIAM Journal on Control and Optimization},
	volume  = {51},
	number  = {3},
	pages   = {2132--2151},
	year    = {2013}
}

@incollection{RW,
	author    = {Radziunas, M. and W{\"u}nsche, H.-J.},
	title     = {Dynamics of multisection DFB semiconductor lasers: traveling wave and mode approximation models},
	editor    = {Piprek, J.},
	booktitle = {Optoelectronic Devices: Advanced Simulation and Analysis},
	publisher = {Springer},
	address   = {Berlin},
	pages     = {121--150},
	year      = {2005}
}

@article{RM74,
	author  = {Rauch, J. B. and Massey, F. J. I.},
	title   = {Differentiability of solutions to hyperbolic initial-boundary value problems},
	journal = {Transactions of the American Mathematical Society},
	volume  = {189},
	pages   = {303--318},
	year    = {1974}
}

@book{Webb,
	author    = {Webb, G. F.},
	title     = {Nonlinear Age-Dependent Population Dynamics},
	publisher = {Marcel Dekker},
	year      = {1985}
}

@book{Wloka,
	author    = {Wloka, J.},
	title     = {Partial Differential Equations},
	publisher = {Cambridge University Press},
	year      = {1987}
}

@book{AP,
  author		= "Amerio, L. and Prouse, G.",
  title			= "Almost-periodic functions and functional equations",
  address		= "New York",
  publisher		= "Springer Science and Business Media",
  year			= "1971",
  doi			= "10.1007/978-1-4757-1254-4"
}

@article{BertiBiascoProcesi14,
	author  = {Berti, Massimiliano and Biasco, Luca and Procesi, Michela},
	title   = {KAM for Reversible Derivative Wave Equations},
	journal = {Archive for Rational Mechanics and Analysis},
	volume  = {212},
	number  = {3},
	pages   = {905--955},
	year    = {2014},
	doi     = {10.1007/s00205-014-0726-0}
}

@article{CraigWayne93,
	author  = {Craig, Walter and Wayne, C. Eugene},
	title   = {Newton's method and periodic solutions of nonlinear wave equations},
	journal = {Communications on Pure and Applied Mathematics},
	volume  = {46},
	number  = {11},
	pages   = {1409--1498},
	year    = {1993},
	doi     = {10.1002/cpa.3160461102}
}

@misc{MelnykPopRohde26,
	author        = {Mel'nyk, Taras and Pop, Sorin and Rohde, Christian},
	title         = {Multiscale Hyperbolic-Parabolic Models for Nonlinear Reactive Transport in Heterogeneously Fractured Porous Media},
	year          = {2026},
	eprint        = {2602.16439},
	archivePrefix = {arXiv},
	primaryClass  = {math.AP}
}

@article{Poschel96,
	author  = {P{\"o}schel, J{\"u}rgen},
	title   = {A KAM-theorem for some nonlinear partial differential equations},
	journal = {Annali della Scuola Normale Superiore di Pisa. Classe di Scienze},
	series  = {4},
	volume  = {23},
	number  = {1},
	pages   = {119--148},
	year    = {1996}
}

@article{Wayne90,
	author  = {Wayne, C. Eugene},
	title   = {Periodic and quasi-periodic solutions of nonlinear wave equations via KAM theory},
	journal = {Communications in Mathematical Physics},
	volume  = {127},
	number  = {3},
	pages   = {479--528},
	year    = {1990}
}

\end{document}